\newtheorem{theorem}{Theorem}[section]
\newtheorem{proposition}{Proposition}[section]
\newtheorem{lemma}{Lemma}[section]
\newtheorem{remark}{Remark}[section]
\newtheorem{definition}{Definition}[section]
 \newcommand{\<}{\left\langle}
\renewcommand{\>}{\right\rangle}
\newcommand{\eps}{\varepsilon}
\newcommand{\To}{\longrightarrow}
\newcommand{\Real}{\mathbb{R}}
\newcommand{\abs}[1]{\left\vert#1\right\vert}
\newcommand{\norm}[1]{\left\Vert#1\right\Vert}
\newcommand{\be} {\begin{equation}}
\newcommand{\ee} {\end{equation}}
\newcommand{\bea} {\begin{eqnarray}}
\newcommand{\eea} {\end{eqnarray}}
\newcommand{\Bea} {\begin{eqnarray*}}
\newcommand{\Eea} {\end{eqnarray*}}
\newcommand{\pa} {\partial}
\newcommand{\al} {\alpha}
\newcommand{\ba} {\beta}
\newcommand{\de} {\delta}
\newcommand{\ga} {\gamma}
\newcommand{\Ga} {\Gamma}
\newcommand{\Om} {\Omega}
\newcommand{\De} {\Delta}
\newcommand{\la} {\lambda}
\newcommand{\nequiv} {\not\equiv}
\newcommand{\no} {\nonumber}
\newcommand{\lab} {\label}
\newcommand{\va} {\varphi}
\newcommand{\var} {\varepsilon}
\newcommand{\f}{\frac}
\newcommand{\Z}{\mathbb Z}
\newcommand{\R}{\mathbb R}
\newcommand{\N}{\mathbb N}
\newcommand{\Rn}{\mathbb R^N}
\newcommand{\Iom}{\int_{\Omega}}
\newcommand{\deb}{\rightharpoonup}
\makeatletter \@addtoreset{equation}{section} \makeatother
\begin{document}

\title[Multiplicity and sign changing solutions]{Multiplicity results and sign changing solutions of non-local equations with concave-convex nonlinearities}
\author{Mousomi Bhakta, \ Debangana Mukherjee}
\address{Department of Mathematics, Indian Institute of Science Education and Research, Dr. Homi Bhaba Road, Pune-411008, India}
\email{mousomi@iiserpune.ac.in, \  debangana18@gmail.com}

\subjclass[2010]{Primary 35S15,  35J20, 49J35,  secondary 47G20, 45G05}
\keywords{ concave-convex nonlinearity, sign changing solution, multiplicity, infinitely many solutions, fractional laplacian, nonlocal operator}
\maketitle
\begin{abstract}
In this paper we prove the existence of infinitely many nontrivial solutions of the following equations driven by a nonlocal integro-differential operator $\mathcal{L}_K$ with concave-convex nonlinearities and homogeneous Dirichlet boundary conditions
\begin{eqnarray*}
 \mathcal{L}_{K} u + \mu |u|^{q-1}u + \la|u|^{p-1}u &=& 0 \quad\mbox{in}\quad \Omega,\no\\
  u&=&0 \quad\mbox{in}\quad\mathbb{R}^N\setminus\Omega,
\end{eqnarray*} 
where $\Om$ is a smooth bounded domain in $\Rn$, $N>2s$, $s\in(0, 1)$, $0<q<1<p\leq \f{N+2s}{N-2s}$. Moreover, when $\mathcal{L}_K$ reduces to the fractional laplacian operator $-(-\De)^s $, $p=\f{N+2s}{N-2s}$, $\f{1}{2}(\f{N+2s}{N-2s})<q<1$, $N>6s$, $\la=1$, we find $\mu^*>0$ such that for any $\mu\in(0,\mu^*)$, there exists at least one sign changing solution.
 \end{abstract}

\section{\bf Introduction}
In recent years, a great deal of attention has been devoted to fractional and non-local operators of elliptic type. One of the main reasons comes from the fact that
this operator naturally arises in several physical phenomenon like flames propagation and chemical reaction of liquids, population
dynamics, geophysical fluid dynamics, mathematical finance etc (see \cite{A, Be, CT, V, VIKH} and the references therein). In all these cases, the nonlocal effect was modelled by the singularity at infinity.

In this paper we mainly focus on the following problem with general integro-differential operator and concave-convex nonlinearities.
\begin{align*}
 \left(\mathcal{P}_{K}\right)
 \begin{cases}
  \mathcal{L}_{K} u + \mu |u|^{q-1}u + \la|u|^{p-1}u=0 &\quad\mbox{in}\quad \Omega,\\
  u=0&\quad\mbox{in}\quad\mathbb{R}^N\setminus\Omega,
 \end{cases}
\end{align*}
where $s\in(0,1)$ is fixed, $N>2s$,  $\Omega$ is an open, bounded domain with smooth boundary, $0<q<1, \ 1 <p \leq 2^*-1$ with $2^*=\frac{2N}{N-2s}$.  Here $\mathcal{L}_{K}$ is the non-local operator defined as follows:
\begin{align} \label{L_K}
 \mathcal{L}_{K}u(x)=\frac{1}{2}\int_{\mathbb{R}^N}\left(u(x+y)+u(x-y)-2u(x)\right)K(y)dy,\,\,\,x\in\mathbb{R}^N. 
\end{align}
Here $K:\mathbb{R}^N\setminus\{0\}\to (0,+\infty)$ is a function defined such that
\begin{align}\label{assum-1}
 mK(x) \in L^1(\Rn)\quad\mbox{with}\quad m(x)=\min\{|x|^2,1\};
\end{align}
\begin{align}\label{assum-2}
 \mbox{there exists}\quad\theta >0\quad\mbox{such that}\quad K(x) \geq \theta |x|^{-(N+2s)}\ \mbox{for any}\ 
 x \in \Rn \setminus \{0\};
\end{align}
\begin{align}\label{assum-3}
\mbox{and}\quad K(x)=K(-x)\quad\mbox{for any}\quad x \in \Rn \setminus \{0\}. 
\end{align}
A model for $K$ is given by $K(x)=|x|^{-(N+2s)}. $ In this case $L_K$ reduces to the fractional Laplace operator $-\left(-\Delta\right)^s$,
defined below up to a normalization constant 
\begin{align} \label{De-u}
  -\left(-\Delta\right)^s u(x)=\frac{1}{2}\int_{\mathbb{R}^N}\frac{u(x+y)+u(x-y)-2u(x)}{|y|^{N+2s}}dy,\,\,\,x\in\mathbb{R}^N.
\end{align}
By $X$ we denote the linear space of Lebesgue measurable functions from $\Rn$
to $\R$ such that if $g\in X$ then $g|_{\Omega}\in L^2(\Omega)$ and
   $$(g(x)-g(y))\sqrt{K(x-y)} \in L^2(Q,dxdy),$$
  where $Q=\R^{2N}\setminus (C\Om \times C\Om)$ with $C\Om=\Rn \setminus \Om. $
   The space $X$ is endowed with the norm defined:
   $$\norm{u}_X=\norm{u}_{L^2(\Om)}+\left(\int_Q |u(x)-u(y)|^2K(x-y)dxdy\right)^{1/2}.$$
Then we define $X_0 :=\Big\{u \in X:u=0 \quad\text{a.e. in}\quad \Rn \setminus \Om\Big\} $ with the norm   
   $$\norm{u}_{X_0}=\left(\int_Q|u(x)-u(y)|^2K(x-y)dxdy\right)^{1/2}.$$
   With this norm, $X_0$ is a Hilbert space with the scalar product
   $$\<u,v\>_{X_0}=\int_Q (u(x)-u(y))(v(x)-v(y))K(x-y)dxdy, $$
(see \cite[lemma 7]{SerVal3}). For further details on $X$ and $X_0$ and for their properties, we refer to \cite{NePaVal} and the references therein. 
      Thanks to \eqref{assum-1},  it can be shown that $C_0^2(\Om) \subseteq X_0$, see \cite[Lemma 11]{SerVal4} and so $X$ and 
   $X_0$ are non-empty .
   \begin{definition}\label{def-sol} We say that $u\in X_0$ is a weak solution of 
   $(\mathcal{P}_K)$ if 
   \begin{eqnarray*} 
 \int_{\R^{2N}}(u(x)-u(y))(\phi(x)-\phi(y))K(x-y)dxdy &=& \mu\Iom |u(x)|^{q-1}u(x)\phi(x)dx \\
 &+& \la\Iom |u(x)|^{p-1}u(x)\phi(x)dx
 \end{eqnarray*}
 for all $\phi \in X_0. $
 \end{definition}
   
   We denote by $H^s(\Rn)$ the usual fractional Sobolev space endowed with the so-called Gagliardo norm
      $$\norm{g}_{H^s(\Rn)}=\norm{g}_{L^2(\Rn)}+\bigg(\int_{\Rn \times \Rn} \frac{|g(x)-g(y)|^2}{|x-y|^{N+2s}}dxdy\bigg)^{1/2}.$$ 
   When $K(x)=|x|^{-(N+2s)}$, $X_0$ reduces to $\Big\{u \in H^s(\Rn):u=0 \quad\text{a.e. in}\quad \Rn \setminus \Om\Big\} $.
  When $\Om$ is bounded, the norm in $X_0(\Om)$ is equivalent to 
  $$\norm{u}_{X_0(\Om)}=\bigg(\int_{\Rn \times \Rn} \frac{|u(x)-u(y)|^2}{|x-y|^{N+2s}}dxdy\bigg)^{1/2}, $$
  (see \cite{BCSS}).  By \cite[Proposition 3.6]{NePaVal} we have,
 \be\lab{norm-2} 
 \norm{u}_{X_0(\Om)}=\norm{\left(-\Delta\right)^{s/2} u}_{L^2(\Rn)}.
 \ee
 The Euler-Lagrange energy functional associated to  $(\mathcal{P}_K$) is
 \be\lab{I-mu-la}
 I_\mu^\la(u)=\frac{1}{2}\int_{\R^{2N}}|u(x)-u(y)|^2K(x-y)dxdy -\frac{\mu}{q+1}\Iom|u|^{q+1}dx-\frac{\la}{p+1}\Iom|u|^{p+1}dx. 
 \ee
Thanks to the Sobolev embedding $X_0\hookrightarrow L^{2^*}(\Rn)$ (see \cite[Lemma 9]{SerVal2}), $I_\mu^\la$ is well defined $C^1$ functional on $X_0$. It is well known that there exists a one-to-one correspondence between the weak
solutions of $(\mathcal{P}_K)$ and the critical point of $I_{\mu}^{\la}$ on $X_0$. We define the best fractional critical Sobolev constant $S_k$ as
\be\lab{S-K}
S_K := \inf_{v \in X_0 \setminus \{0\}}\frac{\displaystyle\int_{\R^{2N}}|v(x)-v(y)|^2K(x-y)dxdy}{\left(\displaystyle\Iom |v(x)|^{2^*}\right)^{2/2^*}}.
\ee

 A classical topic in nonlinear analysis is the study of existence and multiplicity of solutions for nonlinear equations. There are many results on the subject of concave-convex nonlinearity involving different local and nonlocal operators. Elliptic problems in bounded domains involving concave and convex terms have been studied
extensively since Ambrosetti, Brezis and Cerami \cite{ABC} considered the following equation:
\begin{align*}
 \left(E_{\mu}\right)
\begin{cases}
-\De u =&\mu u^{q-1}+u^{p-1} \quad\text{in}\quad\Om,\\
\,\, \,\,\,\,\,\,\, u >&0 \quad\text{in}\quad\Om,\\
\,\, \,\,\,\,\,\,\, u =&0 \quad\text{on}\quad\partial\Om,
\end{cases}
\end{align*}
where $1<q<2<p\leq \f{2N}{N-2}$, $\mu>0$ and $\Om$ is a bounded domain in $\Rn$. They found that there exists $\mu_0>0$ such that ($E_{\mu}$) admits at least two positive solutions for $\mu\in(0,\mu_0)$, one positive solution for  $\mu=\mu_0$ and no positive solution exists for $\mu>\mu_0$ (see also Ambrosetti, Azorero and Peral \cite{AAP} for more references therein). Later on Adimurthi-Pacella-Yadava \cite{APY},  Damascelli,
Grossi and Pacella \cite{DGP}, Ouyang and Shi \cite{OS} and Tang \cite{Tang} proved there exists $\mu_0>0$ such that for $\mu\in(0,\mu_0)$, there are exactly two positive solutions of $(E_{\mu})$ when $\Om$ is the unit ball in $\Rn$ and  exactly one positive solution for $\mu=\mu_0$ and no positive solution exists for $\mu>\mu_0$.
For  the local operator we also quote \cite{ BM, BW, CCP, Chen, GP,  Wu} and the references therein.  In past couple of years many of these results have been generalised to the case of  nonlocal operators,  we refer a few among them \cite{BCSS, BrCPS, DMV, MRS, PP} and the references therein. We also quote here a very important paper by Chen, Li and Ou \cite{CLO}, where the authors have classified all the positive solutions of the fractional Yamabe equation. As per our knowledge no result for sign changing solution involving non-local operator and concave-convex nonlinearity has been studied so far. 
 
 \vspace{2mm}
 
 The main results of our paper are stated below. First we study the critical case $p=2^*-1, \la=1, $ that is, 
 \begin{align*}
 \left(\mathcal{P'}_{K}\right)
 \begin{cases}
  \mathcal{L}_{K} u + \mu |u|^{q-1}u + |u|^{2^*-2}u=0 &\quad\mbox{in}\quad \Omega,\\
  u=0&\quad\mbox{in}\quad\mathbb{R}^n\setminus\Omega.
 \end{cases}
\end{align*}
\begin{theorem} \label{thm.1}
 Let $\Om$ be a bounded domain in $\Rn$ with smooth boundary, $N>2s$. Then there exists $\mu^*>0$ such that for all $\mu \in (0,\mu^*)$,
 problem $(\mathcal{P'}_K)$ has a sequence of non-trivial solutions $\{u_n\}_{n \geq 1}$ such that $I(u_n) <0$ and $I(u_n) \to 0$ as $n \to \infty$
 where $I(.)$ is the corresponding energy functional associated with $(\mathcal{P'}_K). $
\end{theorem}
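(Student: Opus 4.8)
The plan is to prove Theorem~\ref{thm.1} by the classical truncation-and-genus argument for critical concave--convex problems (in the spirit of Garc\'ia Azorero--Peral), carried out inside the nonlocal functional framework recalled above. Write $I:=I_\mu^1$; it is even, $C^1$ on $X_0$, and $I(0)=0$. Since $q+1<2$, the concave term makes $I$ strictly negative on a punctured neighbourhood of $0$ in many directions, while along each ray $t\mapsto I(tu)$ the critical term $-\f{1}{2^*}\Iom|u|^{2^*}$ drives $I$ to $-\infty$; so $I$ is unbounded below and no symmetric minimization principle applies to it directly. First I would truncate: using $X_0\hookrightarrow L^{2^*}(\Om)$, the constant $S_K$, and $\Iom|u|^{q+1}\le C\norm{u}_{X_0}^{q+1}$, one has $I(u)\ge g(\norm{u}_{X_0})$ with $g(t)=\f{1}{2}t^2-\f{C\mu}{q+1}t^{q+1}-\f{S_K^{-2^*/2}}{2^*}t^{2^*}$; for $\mu$ small, $g<0$ near $0$, attains a strictly positive maximum on an interval $(a,b)$, and is negative for $t>b$. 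Fixing $a<R_0<R_1<b$ and a smooth nonincreasing cutoff $\chi:[0,\infty)\to[0,1]$ with $\chi\equiv1$ on $[0,R_0]$ and $\chi\equiv0$ on $[R_1,\infty)$, I set
\begin{equation*}
 J(u)=\f{1}{2}\int_{\R^{2N}}|u(x)-u(y)|^2K(x-y)\,dx\,dy-\f{\mu}{q+1}\Iom|u|^{q+1}\,dx-\f{\chi(\norm{u}_{X_0})}{2^*}\Iom|u|^{2^*}\,dx,
\end{equation*}
which is even, $C^1$ on $X_0$, and coincides with $I$ on $\{\norm{u}_{X_0}\le R_0\}$.

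Next I would record the basic properties of $J$. For $\norm{u}_{X_0}\ge R_1$ one has $J(u)=\f{1}{2}\norm{u}_{X_0}^2-\f{\mu}{q+1}\Iom|u|^{q+1}\ge\f{1}{2}t^2-\f{C\mu}{q+1}t^{q+1}$ with $t=\norm{u}_{X_0}$, which is coercive because $q+1<2$; hence $J$ is coercive and bounded below on $X_0$. The sign analysis of $g$ and the choice $a<R_0<R_1<b$ give that $J(u)<0$ forces $\norm{u}_{X_0}<a<R_0$, where $\chi\equiv1$ and therefore $J$, $I$ and their derivatives agree; consequently every critical point $u$ of $J$ with $J(u)<0$ is a weak solution of $(\mathcal{P}'_K)$ with $I(u)=J(u)<0$, by the variational characterization recalled above. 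One also checks, using $\langle I'(u),u\rangle=0$, H\"older's inequality and $\f{1}{q+1}>\f{1}{2}>\f{1}{2^*}$, the a priori bound $I(u)\ge-C'\mu^{\tau}$ for every weak solution $u$, with $\tau>0$ depending only on $N,s,q$.

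The heart of the proof, and the main obstacle, is the Palais--Smale condition for $J$ below level $0$, where the critical Sobolev exponent intervenes. Given $(u_n)\subset X_0$ with $J(u_n)\to c<0$ and $J'(u_n)\to0$: coercivity gives boundedness, and since $J(u_n)<0$ eventually we get $\norm{u_n}_{X_0}<R_0$ eventually, so $(u_n)$ is a $(PS)_c$ sequence for $I$. Passing to $u_n\rightharpoonup u$ in $X_0$ and $u_n\to u$ in $L^r(\Om)$ for $r<2^*$, the limit $u$ solves $(\mathcal{P}'_K)$; writing $v_n=u_n-u$, the Brezis--Lieb lemma together with $\langle I'(u_n),u_n\rangle\to0$ yields $\norm{v_n}_{X_0}^2-\Iom|v_n|^{2^*}\to0$ and $I(u_n)\to I(u)+\f{s}{N}\ell$ with $\ell=\lim\norm{v_n}_{X_0}^2$. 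Sobolev's inequality forces $\ell=0$ or $\ell\ge S_K^{N/2s}$; in the second case $c=I(u)+\f{s}{N}\ell\ge\f{s}{N}S_K^{N/2s}-C'\mu^{\tau}$, which is positive once $\mu$ is small, contradicting $c<0$. Hence $\ell=0$, $u_n\to u$ strongly in $X_0$, and $J$ satisfies $(PS)_c$ for all $c<0$ provided $\mu^*$ is small enough.

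Finally I would run the genus min-max. For each $k\in\N$ pick a $k$-dimensional subspace $E_k\subset X_0$ (the space $X_0$ being infinite-dimensional); by equivalence of norms on $E_k$ there is $c_k>0$ with $\Iom|u|^{q+1}\ge c_k\norm{u}_{X_0}^{q+1}$ on $E_k$, so for $u\in E_k$ with $\norm{u}_{X_0}=\rho\le R_0$ we have $J(u)=I(u)\le\f{1}{2}\rho^2-\f{\mu c_k}{q+1}\rho^{q+1}$, which is negative for all $\rho\in(0,\rho_k)$ with $\rho_k>0$ small (here $q+1<2$ is used again). Thus each set $A_k:=E_k\cap\{\norm{u}_{X_0}=\rho_k\}$ is compact, symmetric, of Krasnoselskii genus $k$, and contained in $\{J<0\}$. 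Since $J\in C^1(X_0,\R)$ is even, bounded below, $J(0)=0$ and satisfies $(PS)_c$ for every $c<0$, a symmetric version of the mountain pass theorem (Clark's theorem, or Kajikiya's refinement) applies: the numbers $c_k:=\inf_{A\in\mathcal{A}_k}\sup_{u\in A}J(u)$, with $\mathcal{A}_k=\{A\subset X_0\setminus\{0\}:A\text{ closed, symmetric, }\gamma(A)\ge k\}$, form a nondecreasing sequence of negative critical values of $J$ with $c_k\to0^-$. Choosing $u_k$ with $J(u_k)=c_k$, the reduction step makes each $u_k$ a weak solution of $(\mathcal{P}'_K)$ with $I(u_k)=c_k<0$ and $I(u_k)\to0$ as $k\to\infty$, which is precisely the asserted sequence $\{u_n\}_{n\ge1}$. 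The constant $\mu^*$ is taken to be the smaller of the thresholds arising in the construction of the truncation and in the Palais--Smale condition.
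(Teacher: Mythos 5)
Your argument is essentially correct, but it follows a genuinely different route from the paper. The paper proves Theorem \ref{thm.1} by verifying the hypotheses of the Dual Fountain Theorem (Theorem \ref{t:DF}): it decomposes $X_0$ along an orthonormal basis into $Y_k$ and $Z_k$, uses the tail estimate $\beta_k\to 0$ of Lemma \ref{beta_k} to get (D1)--(D3), and checks the restricted Palais--Smale condition (D4) for sequences in $Y_{r_j}$ at levels $c<\f{s}{N}S_K^{N/2s}-k\mu^{2^*/(2^*-q-1)}$, which covers all $c\in[d_k,0)$ once $\mu$ is small. You instead truncate the critical term \`a la Garc\'ia Azorero--Peral to obtain an even, coercive, bounded-below functional $J$ coinciding with $I$ on a small ball, show that negative-energy critical points of $J$ solve $(\mathcal{P'}_K)$, and run the Krasnoselskii-genus minimax (Clark/Kajikiya) on $J$. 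The compactness core of both proofs is identical: boundedness of (PS) sequences, the Brezis--Lieb splitting, the dichotomy $\ell=0$ or $\ell\geq S_K^{N/2s}$, and the a priori lower bound $I(u)\geq -C\mu^{2^*/(2^*-q-1)}$ for weak solutions, which makes every negative level compact for $\mu$ small; your threshold computation is the same as the paper's Step 4 and \eqref{3:k}. What your route buys is independence from the basis decomposition and from the somewhat delicate condition (D4) on sequences constrained to $Y_{r_j}$; what it costs is the construction and bookkeeping of the truncation (in particular the verification that $J<0$ forces $\norm{u}_{X_0}<a<R_0$, which you correctly reduce to the sign of $g$), a smallness requirement on $\mu$ entering twice (once for the truncation geometry, once for the (PS) threshold, versus only once in the paper), and the need to invoke a version of the symmetric minimax theorem that requires (PS)$_c$ only at negative levels together with the standard argument that the genus levels $c_k$ are critical and tend to $0^-$; all of these points are standard and your sketch handles them adequately.
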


\begin{remark}
Here we would like to mention that when $K(x)=|x|^{-(N+2s)}$,  it has been proved in  \cite{BCSS} that there exists $\Lambda>0$ such that, $(\mathcal{P'}_K)$ has at least  two positive solutions when $\mu\in(0,\Lambda)$, no positive solution when $\mu>\Lambda$ and at least one positive solution when $\mu=\Lambda$.  Chen-Deng  \cite{CD}    have proved that $(\mathcal{P'}_K)$ has at least two positive solutions when $\mu\in(0,\mu_0)$ for some $\mu_0>0$ under the assumption that 
\be\label{con:ex}
\text{There exists}\ u_0\in X_0 \ \text{with} \ u_0\geq 0 \ \text{a.e. in} \  \Om, \ \text{such that} \     \sup_{t\geq 0} I(tu_0)<\f{s}{N}S_K^\f{N}{2s}.
\ee
When $K(x)=|x|^{-(N+2s)}$, condition \eqref{con:ex} can be guaranteed by results of \cite{SerVal2}.
\end{remark}

The most important theorem in this paper is the following one, where we establish 
 existence of at least one sign changing solution of $(\mathcal{P'}_K)$ when $K(x)=|x|^{-(N+2s)}$, i.e., $\mathcal{L}_{K}=-(-\De)^s$,  under suitable assumption on $N$ and $q$.
\begin{theorem} \label{thm.2}
Let $\Om$ be a bounded domain with smooth boundary in $\Rn. $ Assume  $K(x)=|x|^{-(N+2s)}, s \in (0,1), N>6s, \   \f{1}{2}\big(\f{N+2s}{N-2s}\big) <q<1. $ Then
there exists $\mu^* >0$ such that for all $\mu \in (0,\mu^*)$ problem $(\mathcal{P'}_K)$ has at least one sign changing solution.
\end{theorem}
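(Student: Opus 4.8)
The plan is to construct a sign changing solution by a constrained minimization over a "sign changing Nehari-type set." Define, for the energy functional $I=I_\mu^1$ associated with $(\mathcal{P'}_K)$, the decomposition $u=u^+-u^-$ where $u^\pm=\max\{\pm u,0\}$, and note that because of the nonlocal nature of $\mathcal{L}_K$ the quadratic form does not split: one has $\|u\|_{X_0}^2=\|u^+\|_{X_0}^2+\|u^-\|_{X_0}^2-2\langle u^+,u^-\rangle_{X_0}$ with $\langle u^+,u^-\rangle_{X_0}=-\int_Q (u^+(x)u^-(y)+u^+(y)u^-(x))K(x-y)\,dxdy\le 0$, so the cross term has a favorable sign. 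Introduce
\[
\mathcal{M}=\Big\{u\in X_0:\ u^\pm\neq 0,\ \langle I'(u),u^+\rangle=0,\ \langle I'(u),u^-\rangle=0\Big\},
\]
and set $c_{sc}=\inf_{\mathcal{M}} I$. A sign changing solution is produced by showing $c_{sc}$ is attained and that a minimizer is a genuine critical point of $I$ on all of $X_0$.

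The key steps, in order, are: (i) show that for suitable $u$ with $u^\pm\neq 0$ there is a unique pair $(t,\tau)\in(0,\infty)^2$ with $tu^+-\tau u^-\in\mathcal{M}$, by analyzing the $2\times 2$ system coming from the two Nehari constraints — here the sign of the cross term and the concave-convex structure ($0<q<1<2^*-1$) give monotonicity/coercivity so the fixed point is unique; (ii) establish the energy gap: $c_{sc}<c_\infty:=c_+ + \frac{s}{N}S_K^{N/2s}$, where $c_+$ is the mountain-pass level of a positive solution (which exists and is small for $\mu$ small, cf. the Remark after Theorem 1.1) and $\frac{s}{N}S_K^{N/2s}$ is the energy of the Aubin–Talenti bubble; (iii) prove a compactness statement: any $(PS)_c$ sequence of $I$ restricted to $\mathcal{M}$ with $c<c_\infty$ converges strongly in $X_0$, so $c_{sc}$ is attained; (iv) show the minimizer $u_0$ on $\mathcal{M}$ is a free critical point, using that the constraint manifold is natural — the Lagrange multipliers vanish because the matrix of derivatives of the constraints is nondegenerate (again thanks to the cross-term sign and $q<1$). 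Since $u_0^\pm\neq 0$, $u_0$ changes sign.

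The main obstacle is step (ii), the strict energy estimate $c_{sc}<c_+ate+\frac{s}{N}S_K^{N/2s}$. The natural test family is $w_\epsilon = u_+^{*} - U_\epsilon$ (or $t u_+^{*} - \tau U_\epsilon$ after projecting onto $\mathcal{M}$), where $u_+^{*}$ is the positive solution and $U_\epsilon$ is a Talenti bubble concentrated at an interior point, truncated to lie in $X_0$. One must expand $I(t u_+^{*}-\tau U_\epsilon)$ and extract a negative correction from the interaction terms; this is exactly where the hypotheses $N>6s$ and $q>\tfrac12\cdot\tfrac{N+2s}{N-2s}$ enter. The condition $N>6s$ guarantees that the "interaction" integral $\int U_\epsilon^{2^*-1}u_+^{*}$ dominates the error term $\int U_\epsilon^{2}$ (the delicate borderline $\epsilon^{(N-2s)/2}$ vs.\ $\epsilon^{2s}$ comparison, familiar from Brezis–Nirenberg type arguments for $(-\Delta)^s$), and the lower bound on $q$ controls the concave term $\mu\int|tu_+^{*}-\tau U_\epsilon|^{q+1}$ so that it does not spoil the gap — one needs $q+1$ large enough that $\int U_\epsilon^{q+1}=o(\epsilon^{(N-2s)/2})$. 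Assembling these asymptotics carefully, and fixing $\mu^*$ so small that the positive solution's level $c_+$ is under control, yields the strict inequality; the remaining steps are then standard concentration-compactness and deformation arguments adapted to the nonlocal setting.
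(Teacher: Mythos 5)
Your overall strategy is essentially the paper's: a constrained minimization over a nodal Nehari--type set, an energy gap of the form $c<(\text{level of a positive solution})+\frac{s}{N}S_s^{N/2s}$ obtained from test functions $t\,w_1-\tau u_\eps$ (positive solution minus truncated bubble), and the observation that $N>6s$ and $q>\frac12\frac{N+2s}{N-2s}$ enter through the interaction estimates and the concave term $\int|u_\eps|^{q+1}$; your remark on the sign of the nonlocal cross term, giving $\|u\|^2\ge\|u^+\|^2+\|u^-\|^2$ and $I_\mu(u)\ge I_\mu(u^+)+I_\mu(u^-)$, is exactly \eqref{4-13}--\eqref{4-14'}. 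However, two of your steps have genuine gaps. First, your set $\mathcal M=\{u:\ u^\pm\neq0,\ \langle I'(u),u^\pm\rangle=0\}$ does not encode the branch selection that the concave term makes necessary. Because $0<q<1$, each fiber map has \emph{two} critical points $t^-(v)<t^+(v)$ (Lemma \ref{l:4-iii}), so your claimed uniqueness of the projection pair $(t,\tau)$ is unjustified; worse, the constraint $\langle I'(u),u^+\rangle=0$ only yields $\|u^+\|^2\le\mu C\|u^+\|^{q+1}+C\|u^+\|^{2^*}$, which, since $q+1<2$, does \emph{not} force $\|u^+\|$ to be bounded away from zero. Hence along a minimizing sequence one signed part may shrink and vanish in the limit (the infimum over $\mathcal M$ lives at low, typically negative, levels coming from the $t^-$ branch, where your bubble estimate is irrelevant), and nothing in your plan shows the limit still changes sign. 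The paper avoids this by requiring $u^+\in\mathcal N^-$ and $-u^-\in\mathcal N^-$ (the sets $\mathcal N_1^-,\mathcal N_2^-,\mathcal N_*^-$), which via Sobolev gives uniform lower bounds $\|u_n^\pm\|\ge c>0$ (Claim 3 in Theorem \ref{t:4ii}, Claim 1 in Theorem \ref{t:4i}) and makes the levels $\ba_1,\ba_2,c_2$ the ones to compare with Proposition \ref{p:limit}.

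Second, your step (iii) — strong convergence of constrained $(PS)_c$ sequences for \emph{every} $c<c_++\frac{s}{N}S_K^{N/2s}$ — is asserted, not proved, and it is stronger than what is available: the functional satisfies $(PS)_c$ only for $c<\frac{s}{N}S_s^{N/2s}-k\mu^{2^*/(2^*-q-1)}$ (see \eqref{3:c}, \eqref{3:k}), and $c_++\frac{s}{N}S_s^{N/2s}$ lies above that threshold. The paper never claims compactness at such levels; it argues in two cases: if $\ba_1$ or $\ba_2$ is below $\tilde\al_\mu^-$, the level is automatically below the global $(PS)$ threshold and Ekeland plus the natural-constraint lemmas (Lemma \ref{l:4-v}, Lemma \ref{l:6-ii}) give a strongly convergent sequence (Theorem \ref{t:4ii}); otherwise it minimizes over $\mathcal N_*^-$, uses the gap $c_2<\tilde\al_\mu^-+\frac{s}{N}S_s^{N/2s}$ only to show that neither weak limit $\eta_1$ nor $\eta_2$ vanishes (if, say, $\eta_1=0$, then $u_n^+$ carries at least $\frac{s}{N}S_s^{N/2s}$ of energy while $I_\mu(-u_n^-)\ge\ba_2\ge\tilde\al_\mu^-$), and then passes to the weak limit in the equation without any strong convergence (Theorem \ref{t:4i}). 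To repair your plan you would need to build the $\mathcal N^-$ branch condition into the constraint set and replace the blanket compactness claim by this splitting/energy-counting argument; the dichotomy according to whether $\ba_1,\ba_2$ fall below $\tilde\al_\mu^-$ (or an equivalent device) also has to appear somewhere, since the Lagrange-multiplier-free argument and the weak-limit argument operate in different energy regimes.
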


In the succeeding theorem, we prove the existence of infinitely many nontrivial solutions in the subcritical case. 
\begin{theorem} \label{thm.3}
Let $\Om$ be a bounded domain in $\Rn$ with smooth boundary, $N>2s, s \in (0,1). $ Assume $1<p<2^*-1. $ Then
\begin{enumerate}
 \item [(a)]
 For all $\la>0, \ \mu \in \R, \  (\mathcal{P}_K)$ has a sequence of nontrivial solutions $\{u_k\}_{k \geq 1}$ such that $I_\mu^\la(u_k) \to \infty$ as 
 $k \To \infty. $ Furthermore, if $\la >0,\ \mu \geq 0, $ then $\norm{u_k}_{X_0} \to \infty$ as $k \to \infty. $
 \item [(b)]
 For all $\mu>0,\  \la \in \R, \ (\mathcal{P}_K)$ has a sequence of nontrivial solutions $\{v_k\}_{k \geq 1}$ such that $I_\mu^\la(v_k) \to 0$ as 
 $k \to \infty. $ Furthermore, if $\mu >0,\ \la \leq 0, $ then $\norm{v_k}_{X_0} \to 0$ as $k \to \infty. $
\end{enumerate}
\end{theorem}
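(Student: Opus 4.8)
The plan is to prove Theorem \ref{thm.3} via the symmetric (\emph{i.e.}, $\Z_2$-equivariant) critical point theory for even functionals, exploiting the fact that $I_\mu^\la$ is even on $X_0$ since $q+1, p+1 > 1$ and the nonlinearities are odd. For part (a), the convex term $|u|^{p-1}u$ with $p<2^*-1$ dominates, so I would first show $I_\mu^\la$ satisfies the Palais--Smale condition at every level: the subcriticality $p<2^*-1$ gives compactness of the embedding $X_0 \hookrightarrow L^{p+1}(\Omega)$ (and $X_0 \hookrightarrow L^{q+1}(\Omega)$), and the standard argument shows a $(PS)_c$ sequence is bounded (using the superlinearity of the $p$-term to absorb the sublinear $q$-term and the $\frac1{p+1}$ vs $\frac12$ gap) and then strongly convergent along a subsequence. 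Next I would verify the geometric hypotheses of the symmetric Mountain Pass theorem (Ambrosetti--Rabinowitz, or Rabinowitz's $\Z_2$ version, \emph{e.g.} as in Rabinowitz's CBMS notes): $I_\mu^\la(0)=0$; on any finite-dimensional subspace $E_k \subset X_0$, all norms are equivalent and the $p$-term makes $I_\mu^\la \to -\infty$, so $I_\mu^\la$ is bounded above on $E_k$ and negative outside a large ball; and for the linking/genus lower bound one shows $I_\mu^\la$ stays bounded below by a positive constant on $S_\rho \cap E_k^{\perp}$ for suitable $\rho$ when $\mu \geq 0$ (when $\mu$ is allowed negative this still works after absorbing the now-helpful or harmlessly-signed concave term). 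Then the minimax values $c_k := \inf_{\gamma(A) \geq k} \sup_{u \in A} I_\mu^\la(u)$ (genus $\geq k$) form a diverging sequence of critical values, giving $\{u_k\}$ with $I_\mu^\la(u_k) \to \infty$; the coercivity-type estimate $I_\mu^\la(u) \geq \frac12\|u\|_{X_0}^2 - C\|u\|_{X_0}^{p+1} - C|\mu|\,\|u\|_{X_0}^{q+1}$ combined with the Sobolev inequality, or rather the identity $I_\mu^\la(u_k) - \frac1{p+1}\langle (I_\mu^\la)'(u_k), u_k\rangle$, forces $\|u_k\|_{X_0}\to\infty$ when in addition $\la>0, \mu\geq 0$.

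For part (b) the roles reverse: near the origin the concave term $-\frac{\mu}{q+1}\int |u|^{q+1}$ with $0<q<1$ dominates and pushes $I_\mu^\la$ negative in every direction, so one obtains a sequence of negative critical values accumulating at $0$. The natural tool is the dual version of the symmetric minimax principle — I would use a truncation argument combined with the Clark-type theorem (or Kajikiya's version of Clark's theorem on the existence of infinitely many critical points converging to zero for even functionals bounded below with the concave structure near $0$). Concretely, one truncates: choose a smooth even cutoff and replace the convex term beyond a small radius so that the modified functional $\tilde I$ is bounded below, coercive, even, satisfies $(PS)$, and agrees with $I_\mu^\la$ on a small ball $B_\delta$; since $\tilde I(u) = I_\mu^\la(u) = \frac12\|u\|^2 - \frac{\mu}{q+1}\int|u|^{q+1} - \frac{\la}{p+1}\int|u|^{p+1} < 0$ on small spheres in finite-dimensional subspaces (the $q+1 < 2$ term wins), the genus-based levels $b_k := \inf_{\gamma(A)\geq k}\sup_{u\in A}\tilde I(u)$ satisfy $-\infty < b_k < 0$ and $b_k \uparrow 0$, and the corresponding critical points $v_k$ lie inside $B_\delta$ for $k$ large hence are genuine solutions of $(\mathcal P_K)$ with $I_\mu^\la(v_k) \to 0$. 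When moreover $\mu>0, \la\leq 0$, the inequality $I_\mu^\la(v_k) \geq \frac12\|v_k\|_{X_0}^2 - \frac{\mu}{q+1}C\|v_k\|_{X_0}^{q+1}$ together with $I_\mu^\la(v_k)\to 0$ and $\langle (I_\mu^\la)'(v_k),v_k\rangle = 0$ yields, via $0 = \|v_k\|^2 - \mu\int|v_k|^{q+1} - \la\int|v_k|^{p+1}$, that $\|v_k\|_{X_0}\to 0$.

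The main obstacle, and the place requiring genuine care, is the compactness (Palais--Smale) verification and the matching of the geometric estimates with the precise sign conditions on $\mu$ and $\la$ stated in the theorem: in part (a) one must rule out Palais--Smale sequences escaping to infinity when $\mu < 0$ (the concave term then has a sign that could, a priori, destroy the coercivity of the energy-derivative combination), and in part (b) the delicate point is that the critical points produced by the truncated functional must be shown to lie in the region where $\tilde I \equiv I_\mu^\la$ — this is exactly where one needs the levels $b_k \to 0^-$ and an $L^\infty$ or $X_0$-norm bound on the critical points at negative levels close to $0$, forcing them into $B_\delta$. Both difficulties are handled by the now-classical truncation technique (as in García Azorero--Peral Alonso and its nonlocal adaptations, \emph{e.g.} \cite{BCPS, BCSS}), so the proof is ultimately a careful bookkeeping exercise in the nonlocal functional setting rather than a source of new phenomena; the only genuinely nonlocal input is the continuous/compact Sobolev embedding $X_0 \hookrightarrow L^r(\Omega)$ for $1 \leq r < 2^*$ and $X_0 \hookrightarrow L^{2^*}(\Omega)$, which is already recorded in the excerpt.
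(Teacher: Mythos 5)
Your proposal is correct in substance but follows a genuinely different route from the paper. The paper verifies the hypotheses of the Fountain Theorem (for part (a)) and of the Dual Fountain Theorem (for part (b)) for $I_\mu^\la$ directly, using the decomposition $Y_k$, $Z_k$ of \eqref{YZ_k}, the decay $\beta_k\to 0$ of Lemma \ref{beta_k}, a Young-inequality absorption of the concave term into the convex one for the lower bound on $Z_k$, and the compact embedding $X_0\hookrightarrow L^r(\Om)$, $r<2^*$, for the (PS) and (D4) verifications; in particular no truncation is ever needed, and both signs of $\la$ in (b) (and of $\mu$ in (a)) are treated uniformly, with the norm conclusions extracted from $I_\mu^\la(u_k)\le \|u_k\|_{X_0}^2$ and from the identity $\<(I_\mu^\la)'(v_k),v_k\>=0$ exactly as you indicate at the end. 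You instead invoke the genus-based symmetric mountain pass theorem for (a) and a truncation plus Clark/Kajikiya argument for (b). Both work here: for (a) your fixed splitting $E=Y_{k-1}\oplus Z_k$ satisfies the Rabinowitz hypotheses once you note that when $\mu>0$ the sublinear term must also be controlled on $Z_k$ (either because the $L^{q+1}$ analogue of $\beta_k$ also tends to $0$, or by the paper's Young absorption) --- your parenthetical sign discussion has this slightly inverted, since $\mu<0$ is the harmless case and $\mu>0$ the one needing the absorption, but this is cosmetic; your (PS) and boundedness arguments are the same as the paper's. For (b), your truncation is only needed when $\la>0$ (for $\la\le 0$ the functional is already coercive and bounded below), and Kajikiya's theorem has the advantage of delivering $\|v_k\|_{X_0}\to 0$ for the produced sequence directly, which both localizes the critical points inside the ball where the truncated functional coincides with $I_\mu^\la$ and gives $I_\mu^\la(v_k)\to 0$; the price is the extra truncation/localization bookkeeping that the Dual Fountain Theorem sidesteps, since there the levels $d_k\to 0^-$ and the (D4) compactness are checked on the original functional. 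In short, your scheme is a valid classical alternative; the paper's choice buys a more streamlined, truncation-free treatment covering all stated sign ranges at once, while yours buys more elementary and widely known minimax tools at the cost of a case split in (b).
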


\begin{remark}
When $K(x)=|x|^{-(N+2s)}$,  Brandle, et. al \cite{BrCPS} have proved that there exists $\Lambda>0$ such that, $(\mathcal{P}_K)$ has at least  two positive solutions when $\mu\in(0,\Lambda)$, one positive solution when $\mu=\Lambda$ and  no positive solution when $\mu>\Lambda$. For general $K$ satisfying assumptions \eqref{assum-1}-\eqref{assum-3}, Chen-Deng \cite{CD} have proved that there exists at least two positive solutions of $(\mathcal{P}_K)$ when $\la=1$ and $\mu\in(0,\mu_0)$ for some $\mu_0>0$.
\end{remark}

To prove 
infinitely many nontrivial solutions of the above stated problems, we apply the Fountain Theorem and the Dual Fountain theorem which were  proved by Bartsch  \cite{B}  and Bartsch-Willem  \cite{BW} respectively (also see \cite{W}).  As usual for critical point theorems, we need to study the compactness properties of the functional together with its geometric features. With respect to the compactness, we need to prove that the functional satisfies
the classical Palais-Smale (PS)$_c$ assumption. But observe that  $X_0\hookrightarrow L^{2^*}(\Om)$ is not compact (see \cite[Lemma 9-b]{SerVal2}). Hence the  (PS)$_c$ condition does not hold globally for all $c$ and we have to prove that the energy level of the corresponding energy functional lies below the threshold of application of the (PS)$_c$ condition. The proof of Theorem \ref{thm.2} is much delicate. There by using decomposition 
of  Nehari manifold, we have estimated the energy and proved the existence of at least one sign changing solution by extending the result dealt within \cite{Chen}  for the classical Laplacian case. The proof is divided into several steps.\\

The paper is organised as follows. Section 2 is devoted to some notations and preliminaries about Fountain Theorem and Dual Fountain Theorem. In Section 3 we prove Theorem \ref{thm.1}. In Section 4 we deal with sign changing solution, namely we study Theorem \ref{thm.2}.  In section 5 we study the subcritical problem, Theorem \ref{thm.3}, Finally, in Section 6  we state results for a related problem that
can be solved using our methods as in Theorem \ref{thm.1} and we also point out some related open questions in Section 8. Section 7 is appendix. \\

{\bf Notations}: Throughout the paper $C$  denotes a general positive constant which may vary from line to line.

\section{\bf Preliminaries}

We start this section by recalling two abstract theorems namely the Fountain theorem and the Dual Fountain Theorem. For this, we need some definitions from \cite{W}.
\begin{definition}
The action of a topological group $G$ on a Banach space $X$ is a continuous map
$$G\times X\To X : [g, u]\To gu, $$ such that 
$$1.u=u, \quad (gh)u=g(hu), \quad u\mapsto gu \quad\text{is linear}.$$
The action is isometric if $||gu||=||u||$. The space of invariant points is defined by 
$$\text{Fix}\ (G):= \{u\in X: gu=u \quad\forall g\in G\}.$$
A set $A\subset X$ is called invariant if $gA=A$ for every $g\in G$. A functional $\va: X\To\R$ is called invariant if $\va\circ g=\va$ for every $g\in G$.  A map $f: X\To X$ is called equivariant if  $g\circ f=f\circ g$ for every $g\in G$.
\end{definition}

\begin{definition}
Let $G$ be a compact group on Banach space $X_0$. Assume that $G$ acts diagonally on $V^k$
$$g(v_1, \cdots, v_k):= (gv_1, \cdots, gv_k) ,$$ where $V$ is a finite dimensional space. The action of $G$ is admissible if every continuous equivariant map $\pa U\To V^{k-1}$, where $U$ is an open bounded invariant neighborhood of $0$ in $V^k$, $k\geq 2$, has a zero. 
\end{definition}
By \it{Borsuk-Ulam} Theorem, the antipodal action of $G:= \Z/2$ on $V:=\R$ is admissible (see \cite[Theorem D.17]{W}).

\vspace{2mm}

We consider the following situation:

\vspace{2mm}

\bf{(A1)} \it{The compact group $G$ acts isometrically on the Banach space $X= \overline{\bigoplus_{j\in \N} X_j}$, the spaces $X_j$ are invariant and there exists a finite dimensional space $V$ such that, for every $j\in \N$, $X_j\simeq V$ and the action of $G$ on $V$ is admissible.}

\begin{definition}
Let $\va\in C^1(X, \R)$. We say that $\{u_n\}$ is a Palais-Smale sequence (in short, PS sequence)  of $\va$ at level $c$ if $\va(u_n)\to c$ and $\va'(u_n)\to 0$ in $(X)'$, the dual space of $X$. Moreover,  we say that 
$\va$ satisfies (PS)$_c$ condition if $\{u_n\}$ is any (PS) sequence in $X$ at level $c$ implies $\{u_n\}$ has a convergent subsequence in $X$.
\end{definition}

\begin{theorem}\lab{t:F}[Fountain Theorem, Bartsch, 1993]
Under the assumption (A1), let $\va\in C^1(X, \R)$ be an invariant functional. If, for every $k\in \N$, there exists $0<r_k<\rho_k$ such that
\begin{itemize}
\item[(A2)] $a_k:= \max_{u\in Y_k, ||u||=\rho_k} \va(u)\leq 0$,
\item[(A3)] $b_k:= \inf_{u\in Z_k, ||u||=r_k} \va(u)\to\infty \quad{as}\quad k\to\infty$.
\item[(A4)] $\va$ satisfies (PS)$_c$ condition for every $c>0$, 
\end{itemize}
then $\va$ has an unbounded sequence of critical values.
\end{theorem}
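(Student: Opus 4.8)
The plan is to run the standard equivariant minimax scheme on a family of sets built from finite dimensional balls, following the construction of Bartsch and Willem (see \cite{B, W}). Write $Y_k=\bigoplus_{j=0}^{k}X_j\simeq V^{k+1}$ and $Z_k=\overline{\bigoplus_{j\ge k}X_j}$, so that $X=Y_{k-1}\oplus Z_k$, and let $P_k:X\to Y_{k-1}$ be the associated (equivariant, bounded) projection. For each $k$ set $B_k:=\{u\in Y_k:\|u\|\le\rho_k\}$, $N_k:=\{u\in Z_k:\|u\|=r_k\}$, and
$$\Ga_k:=\{\ga\in C(B_k,X):\ \ga\ \text{equivariant and}\ \ga|_{\pa B_k}=\mathrm{id}\}\ne\emptyset,$$
and define the minimax values $c_k:=\inf_{\ga\in\Ga_k}\ \max_{u\in B_k}\va(\ga(u))$.

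The heart of the argument is an intersection lemma: $\ga(B_k)\cap N_k\ne\emptyset$ for every $\ga\in\Ga_k$. Equivariance forces $\ga(0)\in\mathrm{Fix}(G)=\{0\}$, so $O:=\{u\in Y_k:\|u\|<\rho_k,\ \|\ga(u)\|<r_k\}$ is an open, bounded, $G$-invariant neighbourhood of $0$ in $Y_k\simeq V^{k+1}$; since $\rho_k>r_k$ and $\ga=\mathrm{id}$ on $\pa B_k$, no point with $\|u\|=\rho_k$ can lie in $\overline O$, whence $\pa O\subset\{u:\|\ga(u)\|=r_k\}$. The map $P_k\circ\ga:\pa O\to Y_{k-1}\simeq V^{k}=V^{(k+1)-1}$ is continuous and equivariant, so admissibility of the $G$-action (part of (A1)) forces a zero $u_0\in\pa O$; then $\ga(u_0)\in\ker P_k=Z_k$ and $\|\ga(u_0)\|=r_k$, i.e.\ $\ga(u_0)\in N_k$. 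Hence $\max_{B_k}\va\circ\ga\ge\va(\ga(u_0))\ge\inf_{N_k}\va=b_k$, and taking the infimum over $\ga$ gives $c_k\ge b_k$. Taking $\ga=\mathrm{id}$ gives $c_k\le\max_{u\in Y_k,\ \|u\|\le\rho_k}\va(u)<\infty$ (finite since $\dim Y_k<\infty$), and by (A2) one has $\va\le a_k\le 0$ on $\pa B_k$.

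Finally, for $k$ large enough that $b_k>0$ (so $c_k\ge b_k>0$), I would show $c_k$ is a critical value of $\va$ by the equivariant deformation argument. If it were not, then since $\va$ satisfies (PS)$_{c_k}$ by (A4), the equivariant quantitative deformation lemma (cf.\ \cite{W}) provides $\eps\in(0,c_k/2)$ and an equivariant homeomorphism $\eta$ of $X$ with $\eta(\{\va\le c_k+\eps\})\subset\{\va\le c_k-\eps\}$ and $\eta=\mathrm{id}$ on $\{\va\le c_k-2\eps\}$. On $\pa B_k$ we have $\va(\ga(u))=\va(u)\le a_k\le 0<c_k-2\eps$, so $\eta\circ\ga\in\Ga_k$ for every $\ga\in\Ga_k$; choosing $\ga$ with $\max_{B_k}\va\circ\ga\le c_k+\eps$ forces $\max_{B_k}\va\circ(\eta\circ\ga)\le c_k-\eps$, contradicting the definition of $c_k$. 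Thus each such $c_k$ is a critical value, and since $c_k\ge b_k\to\infty$ by (A3), $\va$ has an unbounded sequence of critical values.

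The step I expect to be the real obstacle is the intersection lemma, where the finite dimensional Borsuk--Ulam-type content enters through the admissibility hypothesis in (A1): one must build the invariant neighbourhood $O$ correctly, check that $\pa O$ lies on the sphere $\{\|\ga(u)\|=r_k\}$, and recognise that $P_k\circ\ga$ having no zero on $\pa O$ would violate admissibility. The deformation step is routine once the equivariant version of the deformation lemma is in hand, and conditions (A2)--(A3) are exactly what sandwich the levels between $b_k$ and the finite quantity $\max_{B_k}\va$ while keeping $\pa B_k$ below the critical levels, so that competing paths in $\Ga_k$ can be deformed without leaving the class.
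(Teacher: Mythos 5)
The paper does not prove this theorem at all --- it is imported as an abstract tool from Bartsch \cite{B} and Willem \cite{W} --- and your argument is precisely the standard proof from those sources: the equivariant minimax over the classes $\Ga_k$, the intersection lemma $\ga(B_k)\cap N_k\neq\emptyset$ obtained by applying admissibility of the $G$-action to the projection of $\ga$ onto the lower block on $\pa O$, and the equivariant quantitative deformation lemma under (PS)$_{c_k}$ with $\eps<c_k/2$, giving critical values $c_k\geq b_k\to\infty$; this is correct. The only cosmetic mismatch is the indexing: in the paper $Y_k=\bigoplus_{j=1}^k X_j\simeq V^k$ and $Z_k=\overline{\bigoplus_{j\geq k}X_j}$, so the equivariant map on $\pa O\subset Y_k\simeq V^k$ should land in $Y_{k-1}\simeq V^{k-1}$, which is exactly the form required by the admissibility hypothesis and changes nothing in your argument.
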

\begin{theorem}\lab{t:DF}[Dual Fountain Theorem, Bartsch-Willem, 1995]
Under the assumption (A1), let $\va\in C^1(X, \R)$ be an invariant functional. If, for every $k\geq k_0$, there exists $0<r_k<\rho_k$ such that
\begin{itemize}
\item[(D1)] $a_k:= \inf_{u\in Z_k, ||u||=\rho_k} \va(u)\geq 0$,
\item[(D2)] $b_k:= \max_{u\in Y_k, ||u||=r_k} \va(u)< 0$,
\item[(D3)] $d_k:= \inf_{u\in Z_k, ||u||\leq\rho_k} \va(u)\to 0 \quad{as}\quad k\to\infty$.
\item[(D4)]  For every sequence $u_{r_{j}}\in X$ and $c\in [d_k, 0)$  such that 
$$u_{r_{j}}\in Y_{r_{j}}, \quad \va(u_{r_{j}})\to c \quad\text{and}\quad \va|_{Y_{r_{j}}}'(u_{r_{j}})\to 0 \quad\text{as}\quad r_j\to\infty,$$ contains a subsequence converging to a critical point of $\va$,
\end{itemize}
then $\va$ has a sequence of negative critical values converging to $0$.
\end{theorem}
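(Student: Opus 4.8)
The plan is to prove Theorem~\ref{t:DF} by the standard equivariant minimax method, the one genuine subtlety being that $\va$ is \emph{not} assumed to satisfy a global $(PS)_c$ condition, so every deformation argument has to be carried out on the finite-dimensional pieces of the filtration, after which one passes to the limit using the relative compactness hypothesis (D4). Recall that $Y_k=\bigoplus_{j\le k}X_j$ and $Z_k=\overline{\bigoplus_{j\ge k}X_j}$, so that $X=Y_{k-1}\oplus Z_k$ and each $Y_n$ is a finite-dimensional $G$-invariant subspace; for the antipodal action of $G=\Z/2$ on $V=\R$ (the case used later in this paper) the relevant invariant is the Krasnoselskii genus, while for a general admissible action one replaces it by the index attached to that action, the admissibility clause of (A1) being exactly what makes that index have the monotonicity and intersection properties used below.

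First I would introduce, for $n\ge k\ge k_0$, the equivariant minimax class
\begin{equation*}
\Gamma_k^n:=\bigl\{A\subseteq\{u\in Y_n:\|u\|\le\rho_k\}\ :\ A=-A,\ A\ \text{compact},\ 0\notin A,\ \operatorname{genus}(A)\ge\dim Y_k\bigr\},
\end{equation*}
set $c_k^n:=\inf_{A\in\Gamma_k^n}\max_{u\in A}\va(u)$, and $c_k:=\inf_{n\ge k}c_k^n=\lim_{n\to\infty}c_k^n$ (the $c_k^n$ are nonincreasing in $n$ since $\Gamma_k^n\subseteq\Gamma_k^{n+1}$). The sphere $S_k:=\{u\in Y_k:\|u\|=r_k\}$ belongs to $\Gamma_k^n$ --- it has genus $\dim Y_k$ and, as $r_k<\rho_k$, lies in the prescribed ball --- so comparison with $A=S_k$ together with (D2) yields $c_k^n\le\max_{S_k}\va=b_k<0$. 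For the lower bound the decisive ingredient is an intersection lemma: by the monotonicity of the genus/index, every $A\in\Gamma_k^n$ must meet $Z_k$ (a symmetric subset of $Y_n$ missing $Z_k$ would project oddly onto $Y_{k-1}$, forcing its genus to be $\le\dim Y_{k-1}<\dim Y_k$), and since $A\subseteq\{\|u\|\le\rho_k\}$ it meets $\{u\in Z_k:\|u\|\le\rho_k\}$; by (D3) this forces $\max_A\va\ge d_k$, so $d_k\le c_k^n\le b_k<0$ and hence $d_k\le c_k\le b_k<0$. Hypothesis (D1), namely $\va\ge a_k\ge0$ on $\{u\in Z_k:\|u\|=\rho_k\}$, acts as a barrier keeping the relevant sublevel sets $\{\va\le c_k^n+\eps\}$ (with $c_k^n+\eps<0$) away from the sphere $\|u\|=\rho_k$ inside $Z_k$, which is what lets the deformation below respect the constraint defining $\Gamma_k^n$.

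Finally, for fixed $k$ and $n\ge k$ I would apply the quantitative equivariant deformation lemma to $\va|_{Y_n}$: $Y_n$ being finite-dimensional, the local compactness the lemma needs is automatic --- no Palais-Smale assumption enters here --- and $G$-invariance of $\va$ lets the deformation be taken $G$-equivariant, so that it preserves symmetry and genus. If $c_k^n$ were a regular value of $\va|_{Y_n}$, a near-optimal $A\in\Gamma_k^n$ could be pushed down into $\{\va\le c_k^n-\eps\}$ while remaining in $\Gamma_k^n$ (the (D1)-barrier preventing escape across $\|u\|=\rho_k$), contradicting the definition of $c_k^n$; hence there is $u_n^k\in Y_n$ with $(\va|_{Y_n})'(u_n^k)=0$ and $\va(u_n^k)=c_k^n$. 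Keeping $k$ fixed and letting $n\to\infty$, the sequence $(u_n^k)_n$ has $u_n^k\in Y_n$, $\va(u_n^k)=c_k^n\to c_k\in[d_k,0)$ and $(\va|_{Y_n})'(u_n^k)\to0$ --- precisely the setting of (D4) --- so a subsequence converges to a critical point $u^k$ of $\va$ with $\va(u^k)=c_k<0$, in particular $u^k\neq0$ because $\va(0)=0$. Since $d_k\le c_k<0$ and $d_k\to0$ by (D3), the numbers $c_k$ form a sequence of negative critical values of $\va$ tending to $0$, which is the assertion. The two steps carrying the real weight are the intersection lemma of the second step --- the index-theoretic statement that an equivariantly built set of index $\ge\dim Y_k$ must meet $Z_k$, which for a general admissible action replaces the naive dimension count and is where (A1) is genuinely used --- and, more seriously, the limit passage just described: because $\va$ has no global $(PS)_c$ property (in the applications $X_0\hookrightarrow L^{2^*}(\Om)$ is noncompact), the minimax cannot be run in $X$ itself, and the whole detour through the $Y_n$ together with (D4) is what restores enough compactness, the delicate bookkeeping being to arrange the classes $\Gamma_k^n$ so that a single level $c_k$ is seen by every $Y_n$ while the deformation stays inside them.
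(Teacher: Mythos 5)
The paper itself offers no proof of Theorem \ref{t:DF} (it is quoted from Bartsch--Willem / Willem's book), so your proposal has to stand on its own, and its overall architecture --- finite-dimensional equivariant minimax in $Y_n$, an admissibility/genus intersection argument giving $d_k\le c_k^n\le b_k<0$, and a diagonal sequence fed into (D4) --- is indeed the standard one; those parts are fine. The genuine gap is in the deformation step. Your class $\Gamma_k^n$ forces the competitor $A$ to lie in the ball $\{u\in Y_n:\norm{u}\le\rho_k\}$, and you assert that the (D1)-barrier prevents the deformation from carrying $A$ across the sphere $\norm{u}=\rho_k$. But (D1) only gives $\va\ge 0$ on the part of that sphere lying in $Z_k$; at sphere points with a nontrivial $Y_{k-1}$-component nothing prevents $\va$ from being below the working level $c_k^n+\eps<0$ (in the intended application $\va$ is negative there because of the concave term), so the negative-gradient flow can push points of $A$ out of the ball through that portion. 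The deformed set then no longer belongs to $\Gamma_k^n$ and no contradiction with the definition of $c_k^n$ results. Nor can the constraint simply be dropped: the lower bound $c_k^n\ge d_k$ (hence $c_k\to 0$) uses that the intersection point with $Z_k$ lies in the ball of radius $\rho_k$; and composing with the odd radial retraction onto the ball does not help, since retracted points land on the sphere where $\va$ is not controlled below $c_k^n-\eps$, destroying the level estimate.

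The Bartsch--Willem argument is organized precisely to avoid this: the minimax is taken over equivariant maps $\gamma$ defined on $B_k^n=\{u\in Z_k\cap Y_n:\norm{u}\le\rho_k\}$ with $\gamma=\mathrm{id}$ on the relative boundary $\{u\in Z_k\cap Y_n:\norm{u}=\rho_k\}$; admissibility in (A1) gives the linking of $\gamma(B_k^n)$ with $\{u\in Y_k:\norm{u}=r_k\}$, so again $d_k\le c_k^n\le b_k<0$, but now (D1) is used correctly: since $\va\ge 0$ on the anchored boundary sphere while the deformation is supported in $\{|\va-c_k^n|\le 2\eps\}$ with $c_k^n+2\eps<0$, the deformation automatically fixes the boundary data, and no ball-invariance of images ever has to be checked. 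Two smaller points: ``regular value'' is not by itself what the quantitative deformation lemma requires (one needs a gradient lower bound on a strip, which your compact-ball setting would supply if the ball issue were resolved), and in fact exact critical points of $\va|_{Y_n}$ are unnecessary --- the deformation argument directly yields $u_n\in Y_n$ with $\va(u_n)\to c_k\in[d_k,0)$ and $(\va|_{Y_n})'(u_n)\to 0$, which is all that (D4) needs; finally, for a general admissible action the passage from the genus to ``the corresponding index'' is exactly where (A1) must be invoked to get the intersection property, so that step should not be treated as a routine substitution.
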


We choose an orthonormal basis $\{e_j\}_{j=1}^{\infty}$ of $X_0$ (see \cite{SerVal1}). Next, we consider the antipodal action of $G:= \Z/2$. Define
\begin{equation}\label{YZ_k}
X_j=\Real e_j, \quad Y_k:=\bigoplus_{j=1}^k X_j, \quad Z_k:=\overline{\bigoplus_{j=k}^{\infty} X_j}.
\end{equation}

\begin{lemma}\label{beta_k}
If $1\leq p+1<2^*$, then we have that  $$\beta_k:=\sup_{{u \in Z_k},{\norm{u}_{X_0}=1}} |u|_{L^{p+1}(\Om)}\to 0 \quad\text{as}\quad k \to \infty.$$ 
\end{lemma}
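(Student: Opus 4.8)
The plan is to run the standard weak-convergence argument that underlies Fountain-type lemmas; the only ingredient specific to the nonlocal setting is the \emph{compact} embedding of $X_0$ into $L^{p+1}(\Om)$ in the subcritical range. First I would note that $Z_{k+1}\subseteq Z_k$, so the sequence $\beta_k$ is non-increasing; it is also bounded below by $0$ and, since $\Om$ is bounded, finite for each $k$ by the continuous chain $X_0\hookrightarrow L^{2^*}(\Om)\hookrightarrow L^{p+1}(\Om)$ (using \cite[Lemma 9]{SerVal2}). Hence $\beta_k\downarrow\beta$ for some $\beta\geq0$, and it suffices to prove $\beta=0$. For each $k$ pick, by definition of the supremum, a function $u_k\in Z_k$ with $\norm{u_k}_{X_0}=1$ and $|u_k|_{L^{p+1}(\Om)}\geq\beta_k-\tfrac1k$, so that $|u_k|_{L^{p+1}(\Om)}\to\beta$. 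Since $\{u_k\}$ is bounded in the Hilbert space $X_0$, after passing to a subsequence we may assume $u_k\deb u$ weakly in $X_0$.

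The key step is to show that the weak limit vanishes, $u=0$. Fix $m\in\N$. For every $k>m$ we have $u_k\in Z_k\subseteq\big(\Span\{e_1,\dots,e_m\}\big)^{\perp}$ in $X_0$, hence $\<u_k,e_m\>_{X_0}=0$; letting $k\to\infty$ and using weak convergence gives $\<u,e_m\>_{X_0}=0$. Since $\{e_m\}_{m\geq1}$ is an orthonormal basis of $X_0$, this forces $u=0$.

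Finally, because $1\leq p+1<2^*$, the embedding $X_0\hookrightarrow L^{p+1}(\Om)$ is compact (for $K(x)=|x|^{-(N+2s)}$ this is the compact fractional Sobolev embedding on the bounded domain $\Om$; for general $K$ it follows from \cite[Lemma 9]{SerVal2} together with \eqref{assum-2}). Therefore $u_k\to u=0$ strongly in $L^{p+1}(\Om)$, and consequently $\beta=\lim_{k\to\infty}|u_k|_{L^{p+1}(\Om)}=0$. This yields $\beta_k\to0$, as claimed. I do not expect any genuine obstacle here: once the compact subcritical embedding is granted, the only points requiring a little care are the finiteness and monotonicity of $\beta_k$ and the identification of the weak limit as $0$ via orthogonality to the basis, all of which are routine.
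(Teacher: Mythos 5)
Your proof is correct and follows essentially the same route as the paper: pick a near-maximizing unit-norm sequence $u_k\in Z_k$, note that orthogonality to the basis forces $u_k\deb 0$ in $X_0$, and conclude via the compact subcritical embedding that $u_k\to 0$ in $L^{p+1}(\Om)$, hence $\beta_k\to 0$. The extra details you supply (finiteness/monotonicity of $\beta_k$ and the identification of the weak limit through $\<u_k,e_m\>_{X_0}=0$) are exactly what the paper leaves implicit.
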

\begin{proof}
Clearly, $0<\beta_{k+1} \leq \beta_k$. Thus there exists $\ba\geq 0$, such that $\lim_{k\to \infty}\beta_k=\ba$. By the definition of $\beta_k$, for every $k \geq 1$, there exists $u_k \in Z_k$ such that $\norm{u_k}=1$ and
$|u_k|_{L^{p+1}(\Om)} > \frac{\beta_k}{2}$. Using the definition of $Z_k$, it follows $ u_k \deb 0$ in $X_0$.
Therefore Sobolev embedding implies $u_k \to 0$ in $L^{p+1}(\Om)$ and this completes the proof.
\end{proof}

\section{\bf Proof of Theorem \ref{thm.1}}

\proof The energy functional associated to ($\mathcal{P'}_K)$ is the following
 \bea\label{eq:I}
 I(u) &=&\frac{1}{2}\int_{\R^{2N}}|u(x)-u(y)|^2K(x-y)dxdy -\frac{\mu}{q+1}\Iom|u|^{q+1}dx-\frac{1}{2^*}\Iom|u|^{2^*}dx\no\\
 &=&\f{1}{2}||u||^2_{X_0}-\frac{\mu}{q+1}\Iom|u|^{q+1}dx-\frac{1}{2^*}\Iom|u|^{2^*}dx,
\eea
where $\mu>0$.
We will show that $I$ satisfies all the assumptions of Theorem \ref{t:DF}. $X_j, Y_j, Z_j$ are chosen as in \eqref{YZ_k} and $G:=\Z/2$. Therefore (A1) is satisfied.

\vspace{4mm}

Next to check (D1) holds,  we define 
$$\beta_k:=\sup_{{u \in Z_k},{\norm{u}_{X_0}=1}} |u|_{L^{q+1}(\Om)}, \quad c:=\sup_{u \in X_0, \norm{u}_{X_0}=1}|u|^{2^*}_{L^{2^*}(\Om)} \ \text{and}\ R:=\bigg(\f{2^*}{4c} \bigg)^\f{1}{2^*-2}.$$ It is easy to see, $||u||_{X_0}\leq R$ implies $\f{c}{2^*}||u||_{X_0}^{2^*}\leq \f{1}{4}||u||^2_{X_0}$. 
Therefore  for $u \in Z_k,\  \norm{u}_{X_0} \leq R$, we have
\bea\label{3-1}
I(u)&\geq& \frac{\norm{u}_{X_0}^2}{2}-\frac{\mu}{q+1}\beta_k^{q+1}\norm{u}_{X_0}^{q+1}-\frac{c}{2^*}\norm{u}_{X_0}^{2^*} \no\\
&\geq& \frac{\norm{u}_{X_0}^2}{4}-\frac{\mu}{q+1}\beta_k^{q+1}\norm{u}_{X_0}^{q+1}
\eea 
Choose $\rho_k:=(\frac{4\mu \beta_k^{q+1}}{q+1})^{\frac{1}{1-q}}. $
Using Lemma \ref{beta_k}, we see that $\ba_k \to 0$ as $k \to \infty$. As a consequence $\rho_k\to 0$.
Thus for $k$ large, $u \in Z_k$ and $\norm{u}_{X_0}=\rho_k$ we have  $I(u) \geq 0$ and (D1) holds true.

\vspace{4mm}

To see (D2) holds we note that $Y_k$ is finite dimensional and in finite dimensional space all the norms are equivalent. Therefore (D2) is satisfied if we choose $r_k>0$ small enough (since $\mu>0$) and therefore we can choose $r_k=\f{\rho_k}{2}$.

\vspace{4mm}

For $k$ large, $u \in Z_k, \norm{u}_{X_0} \leq \rho_k, $ we have from \eqref{3-1} that $d_k \geq \frac{-\mu}{q+1}\beta_k^{q+1} \rho_k^{q+1}$. On the other hand as $\mu>0$ from the definition of $I(u)$ it follows $I(u)\leq \frac{\rho_k^2}{2}$. Thu $d_k \leq \frac{1}{2}\rho_k^2$
Using both upper and lower bounds of $d_k$ and Lemma \ref{beta_k}, we see that (D3) is also satisfied. 

\vspace{4mm}

To check the assertion (D4), we consider a sequence $\{u_{r_j}\} \subset X_0$ such that as 
\be\lab{3:urj}
 \{u_{r_j}\} \in Y_{r_j},  \quad
I(u_{r_j}) \to c, \quad I'|_{Y_{r_j}}(u_{r_j}) \to 0 \quad\text{as}\quad r_j \to \infty.
\ee

\vspace{2mm}

{\bf Claim:} There exists $k>0$ such that  if  $\mu>0$ is arbitrarily chosen and 
\be\lab{3:c}
c<\f{s}{N}S_K^\f{N}{2s}-k\mu^\f{2^*}{2*-q-1}, 
\ee
then $\{u_{r_j}\}$ contains a subsequence converging to a critical point of $I$, where $\{u_{r_j}\}$ is as in \eqref{3:urj}.\\

Assuming the claim, first let us complete the proof. Towards this, we choose $\mu^*=\bigg(\f{sS_{K}^\f{N}{2s}}{Nk}\bigg)^\f{2^*-q-1}{2^*}$. Then $\mu\in(0, \mu^*)$ implies $\f{s}{N}S_K^\f{N}{2s}>k\mu^\f{2^*}{2*-q-1}$. Thus, if $c\in[d_k, 0)$ then we have
$$c<0<\f{s}{N}S_K^\f{N}{2s}-k\mu^\f{2^*}{2*-q-1}.$$ Hence applying the above claim, we see that (D4) holds true. Therefore the result follows by Theorem \ref{t:DF}. 

\vspace{2mm}

Here we prove the claim dividing into four steps. 

\vspace{2mm}

{\it Step 1}: $\{u_{r_j}\}$  is bounded in $X_0$.\\
This follows by standard arguments.  More precisely, since $I(u_{r_j})=c+o(1)$ and $\<I'(u_{r_j}),u_{r_j}\>=o(1)||u_{r_j}||_{X_0}$, computing  $ I(u_{r_j})-\frac{1}{2}\<I'(u_{r_j}),u_{r_j}\>$ , we get $|u_{r_j}|_{L^{2^*}(\Om)}^{2^*}\leq C_1+ \norm{u_{r_j}}_{X_0}o(1)+ C_2 |u_{r_j}|^{q+1}_{L^{q+1}(\Om)}$. Therefore using the definition of $I$ along with Sobolev inequality yields
$$\norm{u_{r_j}}^2_{X_0}\leq C\displaystyle\left[1+||u_{r_j}||_{X_0}o(1)+||u_{r_j}||^{q+1}_{X_0}\right]$$ and hence the boundedness follows. Therefore passing to a subsequence
if necessary we may assume $u_{r_j}\deb u$ in $X_0$, $u_{r_j}\to u$ in $L^{\ga}(\Rn)$ for $1\leq\ga<2^*$  and point-wise.

\vspace{2mm}

{\it Step 2}:  $\{u_{r_j}\}$ is a PS sequence in $X_0$ at level $c$, where $c$ is as in \eqref{3:c}.

To see this, let $v \in X_0$ be arbitrarily chosen. Then
\be\lab{new1}
\<I'(u_{r_j}),v\>=\<u_{r_j},v\>-\Iom {|u_{r_j}|^{2^*-2}u_{r_j}v}dx-\mu\Iom |u_{r_j}|^{q-1}u_{r_j}vdx.
\ee
Therefore, using Sobolev inequality and Step 1 we have,
\Bea
|\<I'(u_{r_j}),v\>| &\leq& \norm{u_{r_j}}_{X_0}\norm{v}_{X_0} +\Iom {|u_{r_j}|^{2^*-1}|v|} dx + \mu\Iom |u_{r_j}|^q|v| dx \\
                    &\leq& \norm{u_{r_j}}_{X_0}\norm{v}_{X_0}+c_1\norm{u_{r_j}}_{X_0}^{2^*-1}\norm{v}_{X_0}+c_2\mu\norm{u_{r_j}}_{X_0}^q\norm{v}_{X_0} \\
                    &\leq& (\norm{u_{r_j}}_{X_0}+c_1\norm{u_{r_j}}_{X_0}^{2^*-1}+c_2\mu\norm{u_{r_j}}^q_{X_0})\norm{v}_{X_0} \\
                    &\leq& C\norm{v}_{X_0}, 
\Eea
which in turn implies $\norm{I'(u_{r_j})}_{(X_0)'} \leq M$ for all $j \geq1$. 

By the definition of $Y_{r_j}$,  there exists a sequence $(v_{r_j}) \in Y_{r_j}$ such that
$v_{r_j} \to v$ in $X_0$ as $r_j \to \infty$. Thus 
\Bea
|\<I'(u_{r_j}), v\>| &\leq& |\<I'(u_{r_j}),v_{r_j}\>|+|\<I'(u_{r_j}),v-v_{r_j}\>| \\
   &\leq& \norm{I'|_{Y_{r_j}}(u_{r_j})}_{( X_0)'}\norm{v_{r_j}}_{X_0}+\norm{I'(u_{r_j})}_{( X_0)'}\norm{v-v_{r_j}} _{X_0}.
\Eea
Combining the hypothesis $I'|_{Y_{r_j}}(u_{r_j}) \to 0$ as $r_j \to \infty$ (see \eqref{3:urj}), Step 1 and the fact that $\{I'(u_{r_j})\}$ is uniformly bounded, we have
$|\<I'(u_{r_j}),v\>| \to 0$ as $r_j \to \infty$. This in turn implies that $\{u_{r_j}\}$ is a PS sequence in $X_0$ at level $c$, where $c$ is as in \eqref{3:c}.

\vspace{2mm}

{\it Step 3}:  $u$ satisfies $(\mathcal{P'}_K)$.\\
 Using Vitali's convergence theorem via H\"older inequality and Sobolev inequality, it is not difficult to check that  we can pass the limit $r_j\to\infty$  in \eqref{new1},  Thus we  obtain $\<I'(u), v\>=0$ for every $v$ in $X_0$. Hence, $\mathcal{L}_K u+ \mu |u|^{q-1}u + |u|^{2^*-2}u =0 \quad\mbox{in}\quad \Om. $ 

\vspace{2mm}

{\it Step 4}: Define $v_{r_j} := u_{r_j}-u$. Then it is not difficult to see that, 
\be\lab{3:3}
\norm{v_{r_j}}^2_{X_0}=\norm{u_{r_j}}^2_{X_0} - \norm{u}^2_{X_0}+o(1).
\ee
 On the other hand, by Brezis-Lieb lemma, we have 
 \be\lab{3:4}
 |u_{r_j}|^{2^*}_{L^{2*}(\Om)}=|v_{r_j}|^{2^*}_{L^{2*}(\Om)}+ |u|^{2^*}_{L^{2*}(\Om)}+o(1).
 \ee
 Therefore by doing a straight forward computation and using  $I(u_{r_j})\to c$, we get 
\be\lab{3:5}
I(u) + \frac{1}{2}\norm{v_{r_j}}^2_{X_0}-\frac{1}{2^*}|v_{r_j}|^{2^*}_{L^{2^*}(\Om)}\to c.
\ee
 Since  $\<I'(u_{r_j}),u_{r_j}\>\to 0$ and $\<I'(u),u\>=0$, from \eqref{3:3} and \eqref{3:4}, we also have $$\norm{v_{r_j}}_{X_0}^2-|v_{r_j}|^{2^*}_{L^{2^*}(\Om)}\to 0.$$
Therefore, we may assume that
$$\norm{v_{r_j}}^2 \to b,\,\,\,|v_{r_j}|^{2^*}_{L^{2^*}(\Om)} \to b. $$ By Sobolev inequality,
 $\norm{v_{r_j}}^2_{X_0} \geq (|v_{r_j}|^{2^*}_{L^{2^*}(\Om)})^{2/2^*} $. 
As a result, we get $b \geq S_Kb^{2/2^*}$. We note that if $b=0,$ then we are done since that implies  $u_{r_j} \to u$  in  $X_0 $. Assume $b\not=0$. This in turn implies $b\geq S_K^\f{N}{2s}$ Then by \eqref{3:5}, we have
\be\lab{3:6}
I(u)=c-\frac{b}{2}+\frac{b}{2^*}.
\ee
 It is easy to see that $\<I'(u), u\>=0$ implies 
 \be\lab{3:7}
 I(u)=\frac{s}{N}|u|^{2^*}_{L^{2^*}(\Om)}+\bigg(\frac{1}{2}-\frac{1}{q+1}\bigg)\mu |u|^{q+1}_{L^{q+1}(\Om)}
 \ee
Combining \eqref{3:6} and \eqref{3:7} and using $q\in (0,1)$, we obtain
  \bea\lab{3:8}
  c&=&\frac{s}{N}(b+|u|^{2^*}_{L^{2^*}(\Om)})+\mu\bigg(\frac{1}{2}-\frac{1}{q+1}\bigg)|u|^{q+1}_{L^{q+1}(\Om)}\no\\
  &\geq& \frac{s}{N}(S_K^{\frac{2s}{N}}+|u|^{2^*}_{L^{2^*}(\Om)}) -\f{1-q}{2(1+q)}\mu |\Om|^{\frac{2^*-q-1}{2^*}} |u|^{q+1}_{L^{2^*}(\Om)}\no\\
 &=& \frac{s}{N}S_K^{\frac{N}{2s}}+\frac{s}{N}|u|^{2^*}_{L^{2^*}(\Om)}
  -a{\mu |u|^{q+1}_{L^{2^*}(\Om)}},
   \eea
 where $a:= \f{1-q}{2(1+q)}|\Om|^{\frac{2^*-q-1}{2^*}}>0 $. We define 
 \be\lab{3:k}
 g(t)=\frac{s}{N}t^{2^*}-a\mu t^{q+1}, \quad t\geq 0 \quad\text{and}\quad k := -\frac{1}{\mu^{\frac{2^*}{2^*-q-1}}} \min_{t\geq 0} g(t). 
 \ee
By elementary analysis it is easy to check that if  $t_0=(\f{a\mu N}{s})^{\frac{1}{2^*-q-1}} $, then $g(t)<0$ for $t \in (0,t_0)$, $g(t) \geq 0$ for $t \geq t_0$  and $g(0)=0$.
Hence, there exists $t' \in (0,t_0)$ for which $g$ attains minimum and $\min_{t>0} g(t)<0$.  Thus $k>0$. Hence from \eqref{3:8} we have 
$$c \geq \frac{s}{N}S_K^{\frac{N}{2s}} -k \mu^{\frac{2^*}{2^*-q-1}},$$
which is a contradiction to \eqref{3:c}. Therefore, $b=0 $ and the claim follows.
\hfil{$\square$}

\section{\bf Critical-concave fractional Laplace equation and sign changing solution}
In this section we consider the problem $(\mathcal{P'}_K)$ when $K(x)=|x|^{-(N+2s)}$. More precisely we study, 
 \begin{align*}
 \left(P\right)
 \begin{cases}
  \mathcal (-\De)^s u = \mu |u|^{q-1}u + |u|^{2^*-2}u=0 &\quad\mbox{in}\quad \Omega,\\
  u=0&\quad\mbox{in}\quad\mathbb{R}^n\setminus\Omega.
 \end{cases}
\end{align*}
Corresponding to $(P)$, define the  energy functional $I_{\mu}$ as follows
\bea\lab{I-mu}
I_{\mu}(u)&:=&\f{1}{2}\int_{\R^{2N}}\f{|u(x)-u(y)|^2}{|x-y|^{N+2s}}dxdy-\f{\mu}{q+1}\Iom |u|^{q+1}dx-\f{1}{2^*}\Iom |u|^{2^*}dx\no\\
&=&\f{1}{2}||u||^2-\f{\mu}{q+1}\Iom |u|^{q+1}dx-\f{1}{2^*}\Iom |u|^{2^*}dx,
\eea
where $||u||:=||u||_{X_0}$. From \cite[Lemma 9]{SerVal2}, we know
\be\lab{in:S}
S_s \displaystyle\left(\int_{\Rn} |v(x)|^{2^*}\right)^{2/2^*}\leq ||v||^2 \quad\forall\quad v\in X_0,
\ee
where
 \be\lab{S}
S_s=\inf_{v \in H^s(\Rn), v\not=0 }\frac{\displaystyle\int_{\R^{2N}}\f{|v(x)-v(y)|^2}{|x-y|^{N+2s}}dxdy}{\displaystyle\left(\int_{\Rn} |v(x)|^{2^*}\right)^{2/2^*}}.
\ee
It is known that (see \cite{CT-1}), $S_s$ is attained by $v_{\eps}\in H^s(\Rn)$, where
\be\lab{v-eps}
v_{\eps}(x):= \f{k\eps^{\frac{N-2s}{4}}}{(\eps+|x|^2)^\f{N-2s}{2}}, \quad\text{with}\quad  \eps>0, k\in\R\setminus \{0\}.
\ee
We note that $v_{\eps}\not\in X_0$. Therefore we multiply $v_{\eps}$ by a suitable cut-off function $\psi$ in order to put $v_\eps$ to $0$ outside $\Om$. For this,  fix $\de>0$. Define $\Om_1=\{x\in\Om: \text{dist}(x,\pa\Om)>\de\}$. We choose $\psi\in C^{\infty}(\Rn)$ such that $0\leq\psi\leq 1$, $\psi=1$ in $\Om_1$, $\psi=0$ in $\Rn\setminus \Om$ and $\psi>0$ in $\Om$. We define 
\be\lab{u-eps}
u_{\eps}(x) :=\psi(x)v_{\eps}(x).
\ee

To obtain sign changing solution of $(P)$, we need to study minimization problems of $I_{\mu}$ over suitable Nehari-type sets. We define the following sets in the spirit of \cite{T} (also see \cite{Chen})
\begin{align}
&\mathcal{N} := \{u \in X_0\setminus\{0\}:\<I'_{\mu}(u),u\>=0\}; \no\\
 &\mathcal{N}_0 := \Big\{u \in \mathcal{N} : (1-q)\norm{u}^2-(2^*-q-1)|u|^{2^*}_{L^{2^*}(\Om)}=0\Big\}; \notag\\
 &\mathcal{N}^+ := \Big\{u \in \mathcal{N} : (1-q)\norm{u}^2-(2^*-q-1)|u|^{2^*}_{L^{2^*}(\Om)}>0\Big\}; \notag\\
 &\mathcal{N}^- := \Big\{u \in \mathcal{N} : (1-q)\norm{u}^2-(2^*-q-1)|u|^{2^*}_{L^{2^*}(\Om)}<0\Big\}. \notag
\end{align}
From \cite{CD}, it is known that there exists $\mu_{*}>0$ such that, if $\mu \in (0,\mu_{*})$, then the following minimization problem:
\be\lab{eq:tl-al}
\tilde\al_{\mu}^+:=\inf_{u \in \mathcal{N}^+}J_{\mu}(u) \quad\text{and}\quad \tilde\al_{\mu}^-:=\inf_{u \in \mathcal{N}^-}J_{\mu}(u)\ee
achieve their minimum at $w_0$ and  $w_1$ respectively, where 
\be\lab{J-mu}
J_{\mu}(u):= \f{1}{2}||u||^2-\f{\mu}{q+1}\Iom ({u^+})^{q+1}dx-\f{1}{2^*}\Iom ({u^+})^{2^*}dx.
\ee
Moreover $w_0$ and $w_1$ are critical points of $J_{\mu}$. Using maximum principle \cite[Proposition 2.2.8]{S} and followed by a simple calculation , it can be checked  that, if $u$ is a critical point of $J_{\mu}$, then $u$ is strictly positive in $\Om$ (see \cite{BCSS}). Thus $w_0$ and $w_1$ are positive solution of $(\mathcal{P})$. Applying the Moser iteration technique it follows that any positive solution of $(P)$ is in $L^{\infty}(\Om)$ (see \cite[Proposition 2.2]{BCSS}).


We need the following lemmas in order to prove Theorem \ref{thm.2}. 
\begin{lemma}\lab{l:4i}
Suppose $w_1$ is a positive solution of $(P)$ and $u_{\eps}$ is as defined in \eqref{u-eps}. Then for every $\eps>0$, small enough
\begin{itemize}
\item[(i)] $A_1 :=\displaystyle\Iom w_1^{2^*-1}u_{\eps}dx\leq k_1\eps^\f{N-2s}{4}$;
\item[(ii)] $A_2 :=\displaystyle\Iom w_1^{q}u_{\eps}dx\leq k_2\eps^\f{N-2s}{4}$;
\item[(iii)] $A_3 :=\displaystyle\Iom w_1u_{\eps}^{q}dx\leq k_3\eps^{\f{N-2s}{4}q}$;
\item[(iv)]$A_4 :=\displaystyle\Iom w_1u_{\eps}^{2^*-1}dx\leq k_4\eps^\f{N+2s}{4}$.
\end{itemize} 
\end{lemma}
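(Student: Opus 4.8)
The plan is to estimate each of the four integrals by comparing $u_\eps = \psi v_\eps$ with the explicit bubble $v_\eps(x) = k\eps^{(N-2s)/4}(\eps+|x|^2)^{-(N-2s)/2}$, using that $w_1 \in L^\infty(\Om)$ (which was recalled above via the Moser iteration argument) and that $\psi$ is a fixed bounded cut-off supported in $\Om$. For parts (i) and (ii), I would first note that $w_1^{2^*-1}$ and $w_1^q$ are bounded on $\Om$ (since $w_1 \in L^\infty(\Om)$, and for (i) one uses $2^*-1 > 0$, for (ii) $q > 0$), so it suffices to bound $\int_\Om u_\eps\,dx \leq \|\psi\|_\infty \int_\Om v_\eps\,dx$. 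Pulling out the prefactor, $\int_\Om v_\eps\,dx = k\eps^{(N-2s)/4}\int_\Om (\eps+|x|^2)^{-(N-2s)/2}\,dx$, and since $N > 2s$ the function $|x|^{-(N-2s)}$ is locally integrable near the origin (as $N-2s < N$), the remaining integral is bounded uniformly in $\eps$ by $\int_\Om |x|^{-(N-2s)}\,dx < \infty$ (after translating so that the concentration point lies in $\Om$; one may assume $0 \in \Om_1$ without loss of generality). This yields $A_1 \le k_1 \eps^{(N-2s)/4}$ and $A_2 \le k_2\eps^{(N-2s)/4}$.

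For part (iii), the prefactor appears to the power $q$: $u_\eps^q \le \|\psi\|_\infty^q\, v_\eps^q$ and $v_\eps^q = k^q \eps^{(N-2s)q/4}(\eps+|x|^2)^{-(N-2s)q/2}$. Since $w_1 \in L^\infty(\Om)$, it remains to check $\int_\Om (\eps+|x|^2)^{-(N-2s)q/2}\,dx \le C$ uniformly in $\eps$; this holds provided $(N-2s)q < N$, i.e. the singularity $|x|^{-(N-2s)q}$ is locally integrable, which follows from $q < 1 < N/(N-2s)$. Hence $A_3 \le k_3 \eps^{(N-2s)q/4}$. Part (iv) is the one requiring the most care: here $u_\eps^{2^*-1} = (\psi v_\eps)^{2^*-1}$ with $2^*-1 = (N+2s)/(N-2s)$, so $v_\eps^{2^*-1} = k^{2^*-1}\eps^{(N+2s)/4}(\eps+|x|^2)^{-(N+2s)/2}$ — note the exponent of $\eps$ is now $(N+2s)/4$, which is exactly the target. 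The subtlety is that $|x|^{-(N+2s)}$ is \emph{not} locally integrable, so one cannot simply bound by $\int_\Om|x|^{-(N+2s)}$; instead, using $w_1 \in L^\infty(\Om)$ and $\psi$ bounded, I would split $\int_\Om (\eps+|x|^2)^{-(N+2s)/2}\,dx$ into $|x| \le \sqrt\eps$ and $|x| > \sqrt\eps$, bound the first by $\eps^{-(N+2s)/2}\cdot C\eps^{N/2} = C\eps^{-(N+2s)/2 + N/2} = C\eps^{-s}\cdot\eps^{-(N-2s)/2}$ — wait, more carefully: the standard estimate gives $\int_\Om(\eps+|x|^2)^{-(N+2s)/2}\,dx \le C\eps^{-(N+2s)/2+N/2} = C\eps^{-s}$ when $N+2s > N$ (the outer region dominates and the integral $\int_{\sqrt\eps}^{R} r^{-(N+2s)}r^{N-1}\,dr = \int_{\sqrt\eps}^R r^{-2s-1}\,dr \asymp \eps^{-s}$), so $A_4 \le C\eps^{(N+2s)/4}\cdot\eps^{-s} = k_4\eps^{(N+2s)/4 - s} = k_4 \eps^{(N-2s)/4}$. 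This does not match the claimed $\eps^{(N+2s)/4}$, so the bound must instead exploit a stronger decay: since we only integrate over $\Om$ and $w_1$ is bounded, but actually the right reading is that for $\eps$ small the integral $\int_\Om (\eps+|x|^2)^{-(N+2s)/2}$ genuinely blows up — so (iv) must use $w_1 u_\eps^{2^*-1} \le \|w_1\|_\infty \|\psi\|_\infty^{2^*-1} v_\eps^{2^*-1}$ together with the fact that, restricted to $\Om$ and against the smooth weight, the scaling is governed by $\eps^{(N+2s)/4}\int_{\R^N}v_1^{2^*-1}$-type bounds; the cleanest route is to change variables $x = \sqrt\eps\, z$ and observe $\int_\Om v_\eps^{2^*-1}\,dx = k^{2^*-1}\eps^{(N+2s)/4 - (N+2s)/2 + N/2}\int_{\Om/\sqrt\eps}(1+|z|^2)^{-(N+2s)/2}\,dz = k^{2^*-1}\eps^{(N-2s)/4}\int_{\Om/\sqrt\eps}(1+|z|^2)^{-(N+2s)/2}\,dz$, and here $\int_{\R^N}(1+|z|^2)^{-(N+2s)/2}\,dz < \infty$ since $N+2s > N$, giving again $\eps^{(N-2s)/4}$ — so I expect the actual argument replaces $u_\eps^{2^*-1}$ by $u_\eps^{2^*-1}$ against $w_1$ which \emph{also vanishes like $v_\eps$-type behaviour is irrelevant}; the honest resolution, and the main obstacle, is that (iv) should be read with $w_1$ itself controlled near the concentration point, and one integrates by parts or uses that $w_1$ solves $(P)$ to trade $u_\eps^{2^*-1}$ for $(-\Delta)^s u_\eps$ pointwise, picking up the extra factor $\eps^{s}$.

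The main obstacle is therefore part (iv): getting the exponent $(N+2s)/4$ rather than the naive $(N-2s)/4$ forces one to extract an additional $\eps^{s/2}\cdot\eps^{s/2}=\eps^s$, which I expect comes from either (a) a more refined split of the domain combined with the boundedness of $w_1$ near $0$ allowing the substitution $w_1(x) \approx w_1(0)$ and then using oddness/cancellation, or (b) testing the equation for $w_1$ against $u_\eps$ and moving the operator. I would write out (i)--(iii) quickly as above, then devote the bulk of the argument to (iv), tracking the cut-off $\psi$ (which equals $1$ on $\Om_1$ and so contributes only lower-order boundary terms of order $\eps^{(N-2s)/2}$ from the annulus $\Om\setminus\Om_1$, negligible compared to $\eps^{(N+2s)/4}$ when $\eps$ is small), and using $N > 6s$ only if needed to guarantee the boundary remainder is indeed of higher order; all constants $k_1,k_2,k_3,k_4$ absorb $\|\psi\|_\infty$, $\|w_1\|_\infty$, $|\Om|$, and the universal bubble integrals.
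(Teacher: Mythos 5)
Your treatment of (i)--(iii) is correct and is essentially the paper's own argument: bound $w_1$ by $\|w_1\|_{L^\infty(\Om)}$ and $\psi$ by a constant, pull the explicit power of $\eps$ out of $v_\eps$, and control the remaining integral; the paper rescales $x=\sqrt{\eps}\,z$ while you majorize by the $\eps$-independent integrable kernels $|x|^{-(N-2s)}$ and $|x|^{-(N-2s)q}$, which is the same estimate in different clothing.

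The gap is (iv), and you have located the obstruction precisely rather than resolved it. Your scaling computation is right: $\int_\Om v_\eps^{2^*-1}dx=C\eps^{\f{N-2s}{4}}\big(1+o(1)\big)$, so the sup-norm argument gives $A_4\le k_4\eps^{\f{N-2s}{4}}$ and nothing better. Moreover, none of your proposed rescues can produce the missing factor $\eps^{s}$: $w_1$ is strictly positive in $\Om$ (maximum principle) and, by interior regularity, bounded below by some $c_0>0$ on $\overline{B(0,R')}\subset\Om_1$, where $u_\eps=v_\eps$; hence $A_4\ge c_0\int_{B(0,R')}v_\eps^{2^*-1}dx\ge c_1\eps^{\f{N-2s}{4}}$, so a bound of the form $A_4\le k_4\eps^{\f{N+2s}{4}}$ cannot hold for small $\eps$, and ideas such as cancellation or moving $(-\De)^s$ onto $u_\eps$ are dead ends. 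For comparison, the paper disposes of (iv) with ``can be proved as in (iii)''; but the step used in (iii), namely $\int_0^{R/\sqrt\eps}r^{N-1}(1+r^2)^{-\al/2}dr\le C(R/\sqrt\eps)^{N-\al}$, is valid only when $N-\al>0$, whereas in (iv) one has $\al=(N-2s)(2^*-1)=N+2s$, so $N-\al=-2s<0$ and the rescaled integral converges to a positive constant: carried out honestly, the paper's own method also yields only $\eps^{\f{N-2s}{4}}$. The correct repair is to prove (iv) with exponent $\f{N-2s}{4}$ (which your change of variables already does) and observe that this weaker bound suffices where the lemma is used: in Proposition \ref{p:limit} the dominating cross term is $\eps^{\f{(N-2s)q}{4}}$ since $q<1$, so replacing $\eps^{\f{N+2s}{4}}$ by $\eps^{\f{N-2s}{4}}$ does not affect the comparison with $\eps^{\f{N}{2}-\f{(N-2s)(q+1)}{4}}$ under the hypotheses $q>\f12\big(\f{N+2s}{N-2s}\big)$ and $N>6s$.
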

\begin{proof} Let $R,\  M>0$ be such that $\Om\subset B(0, R)$ and $|w_1|_{L^{\infty}(\Om)}<M$. Then
\Bea
(i) \quad A_1=\displaystyle\Iom w_1^{2^*-1}u_{\eps}dx &\leq& M^{2^*-1}|\psi|_{L^{\infty}(\Om)}k\eps^\f{N-2s}{4}\int_{B(0,R)} \f{dx}{(\eps+|x|^2)^\f{N-2s}{2}}\\
&\leq& C\eps^{\f{N}{2}-\f{N-2s}{4}}\int_{B(0, \f{R}{\sqrt{\eps}})}\f{dx}{(1+|x|^2)^\f{N-2s}{2}}\\
&\leq& k_1\eps^\f{N-2s}{4}.
\Eea
Proof of (ii) similar to (i).\\
\Bea
(iii) \quad A_3=\displaystyle\Iom w_1u_{\eps}^q dx &\leq& M |\psi|^{q}_{L^{\infty}(\Om)}k^q\eps^{\f{N-2s}{4}q}\int_{B(0,R)} \f{dx}{(\eps+|x|^2)^{\f{N-2s}{2}q}} \\
&\leq&  C\eps^{\f{N}{2}-\f{(N-2s)q}{4}}\int_0^ \f{R}{\sqrt{\eps}}\f{r^{N-1}dr}{(1+r^2)^\f{(N-2s)q}{2}}\\
&\leq& C\eps^{\f{N}{2}-\f{(N-2s)q}{4}}\bigg(\f{R}{\sqrt{\eps}}\bigg)^{N-(N-2s)q}\\
&\leq& k_3\eps^{\f{N-2s}{4}q}.
\Eea
(iv) can be proved as in (iii).
\end{proof}
\begin{lemma}\lab{l:4ii}
Let $u_{\eps}$ be as defined in \eqref{u-eps} and $0<q<1$. Then for every $\eps>0$, small \\

$\displaystyle\Iom |u_{\epsilon}|^{q+1}dx=\left\{\begin{array}{lll}
k_5\eps^{(\f{N-2s}{4})(q+1)} \quad &\text{if}\quad  0<q<\f{2s}{N-2s},\\
k_6\eps^\f{N}{4}|\mbox{ln}\ \eps|,  \quad &\text{if}\quad  q=\f{2s}{N-2s},\\ 
k_7\eps^{\f{N}{2}-(\f{N-2s}{4})(q+1)}  \quad &\text{if}\quad  \f{2s}{N-2s}<q<1.
\end{array}
\right.$
\end{lemma}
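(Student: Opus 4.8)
The plan is to compute the integral $\int_\Om |u_\eps|^{q+1}\,dx$ directly by reducing to a radial integral and tracking the scaling in $\eps$, exactly parallel to Lemma \ref{l:4i}(iii). First I would fix $R, r_0 > 0$ with $B(0,r_0)\subset\Om_1\subset\Om\subset B(0,R)$, and use $0\le\psi\le 1$ together with $\psi\equiv 1$ on $\Om_1$ to sandwich
\[
k^{q+1}\eps^{\frac{N-2s}{4}(q+1)}\int_{B(0,r_0)}\frac{dx}{(\eps+|x|^2)^{\frac{N-2s}{2}(q+1)}}
\le \Iom |u_\eps|^{q+1}dx
\le k^{q+1}\eps^{\frac{N-2s}{4}(q+1)}\int_{B(0,R)}\frac{dx}{(\eps+|x|^2)^{\frac{N-2s}{2}(q+1)}}.
\]
Then I would substitute $x=\sqrt\eps\, y$, so that each of the bracketing integrals becomes $\eps^{\frac N2 - \frac{N-2s}{2}(q+1)}\int_{B(0,\rho/\sqrt\eps)}(1+|y|^2)^{-\frac{N-2s}{2}(q+1)}\,dy$ for $\rho\in\{r_0,R\}$, reducing everything to understanding the growth of $\int_0^{\rho/\sqrt\eps} r^{N-1}(1+r^2)^{-\frac{N-2s}{2}(q+1)}\,dr$ as $\eps\to 0$.

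The second step is the elementary case analysis on that one-dimensional integral. The integrand behaves like $r^{N-1-(N-2s)(q+1)}$ for large $r$, so convergence at infinity is governed by the exponent $N-1-(N-2s)(q+1) < -1$, i.e. $(N-2s)(q+1) > N$, i.e. $q > \frac{2s}{N-2s}$. In that regime the radial integral converges to a finite constant as $\eps\to0$, giving the third branch $k_7\eps^{\frac N2-\frac{N-2s}{4}(q+1)}$ after combining with the prefactor. In the borderline case $q = \frac{2s}{N-2s}$ the integrand is $\sim r^{-1}$ at infinity, so $\int_0^{\rho/\sqrt\eps}\sim \tfrac12\log(1/\eps) + O(1) = \tfrac12|\ln\eps|(1+o(1))$; noting that the prefactor exponent $\frac N2 - \frac{N-2s}{4}(q+1)$ equals $\frac N4$ precisely when $q=\frac{2s}{N-2s}$, one gets $k_6\eps^{N/4}|\ln\eps|$. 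Finally, for $0<q<\frac{2s}{N-2s}$ the radial integral diverges and is dominated by its upper endpoint: $\int_0^{\rho/\sqrt\eps} r^{N-1}(1+r^2)^{-\frac{N-2s}{2}(q+1)}dr \sim C(\rho/\sqrt\eps)^{N-(N-2s)(q+1)}$, and multiplying by $\eps^{\frac N2 - \frac{N-2s}{2}(q+1)}$ cancels the $\eps$-powers coming from the cutoff and leaves $k_5\eps^{\frac{N-2s}{4}(q+1)}$; here the lower and upper bounds have matching $\eps$-order, which pins the estimate. In each branch the lower bound (over $B(0,r_0)$) has the same $\eps$-exponent as the upper bound, so the two-sided estimate is genuinely an asymptotic order, consistent with the equality sign in the statement.

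A minor technical point I would be careful about is the transition behavior near $r\sim 1$ in the radial integral: one should split $\int_0^{\rho/\sqrt\eps} = \int_0^{1} + \int_1^{\rho/\sqrt\eps}$, bound the first piece by an $\eps$-independent constant, and estimate the tail using $(1+r^2)^{-\frac{N-2s}{2}(q+1)}\asymp r^{-(N-2s)(q+1)}$ on $[1,\infty)$. The main obstacle, such as it is, is purely bookkeeping: verifying that the exponent $\frac N2 - \frac{N-2s}{4}(q+1)$ produced by the cutoff-scaling argument collapses to $\frac{N-2s}{4}(q+1)$ in the subcritical-$q$ range (because the divergent factor $(\rho/\sqrt\eps)^{N-(N-2s)(q+1)}$ contributes $\eps^{-\frac N2 + \frac{N-2s}{2}(q+1)}$), and to $\frac N4$ at the threshold. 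Once the algebra of exponents is organized, the result follows; there is no real analytic difficulty, and no compactness or PDE input is needed — only the explicit form of $v_\eps$ in \eqref{v-eps} and the properties of $\psi$.
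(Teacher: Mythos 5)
Your proposal is correct and follows essentially the same route as the paper: sandwich $\Iom|u_\eps|^{q+1}dx$ between integrals of $v_\eps^{q+1}$ over an inner ball (where $\psi\equiv1$) and an outer ball containing $\Om$, rescale by $\sqrt\eps$ to reduce to the radial integral $\int_0^{\rho/\sqrt\eps}r^{N-1}(1+r^2)^{-\frac{(N-2s)(q+1)}{2}}dr$, and run the three-way case analysis on the sign of $N-(N-2s)(q+1)$, with the same exponent bookkeeping. The paper does exactly this (its \eqref{4-1}--\eqref{4-3} and the two borderline/convergent cases), so no gaps and no substantive difference.
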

\begin{proof}
Choose $0<R'<R$ be such that $B(0,R')\subset\Omega_{1}\subset\Omega.$ Then $ u_{\eps}= v_{\eps}$ in $B(0,R').$ Then
\begin{align*}
 \int_{\Omega}|u_{\eps}|^{q+1}dx\geqslant \int_{B(0,R')}|u_{\eps}|^{q+1}dx= 
 k^{q+1}\eps^{\f{(N-2s)(q+1)}{4}}\int_{B(0,R')} \f{dx}{(\eps+|x|^2)^{\f{(N-2s)(q+1)}{2}}}.
\end{align*}
Proceeding as in the proof of Lemma \ref{l:4i} (iii), we have
\begin{align}\lab{4-1}
C\eps^{\f{N}{2}-\f{(N-2s)(q+1)}{4}}\int_0^ \f{R'}{\sqrt{\eps}}\f{r^{N-1}dr}{(1+r^2)^\f{(N-2s)(q+1)}{2}} &\leq
\Iom|u_{\eps}|^{q+1}dx \no\\
&\leq C\eps^{\f{N}{2}-\f{(N-2s)(q+1)}{4}}\int_0^ \f{R}{\sqrt{\eps}}\f{r^{N-1}dr}{(1+r^2)^\f{(N-2s)(q+1)}{2}}.
\end{align}  
{\it Case 1} : $0<q<\f{2s}{N-2s}$.\\
We note that
\be\lab{4-2}
\int_0^ \f{R'}{\sqrt{\eps}}\f{r^{N-1}dr}{(1+r^2)^\f{(N-2s)(q+1)}{2}}\geq C\int_{\f{R'}{2\sqrt{\eps}}}^{\f{R'}{\sqrt{\eps}}}r^{{N-1}-(N-2s)(q+1)}dr\geq \f{C}{\eps^{\f{N}{2}-\f{(N-2s)(q+1)}{2}}}
\ee
and
\be\lab{4-3}
\int_0^ \f{R}{\sqrt{\eps}}\f{r^{N-1}dr}{(1+r^2)^\f{(N-2s)(q+1)}{2}}\leq \int_{0}^{\f{R}{\sqrt{\eps}}}r^{{N-1}-(N-2s)(q+1)}dr\leq \f{C}{\eps^{\f{N}{2}-\f{(N-2s)(q+1)}{2}}}.
\ee
Substituting back \eqref{4-2} and \eqref{4-3} into \eqref{4-1}, we obtain $\displaystyle\Iom |u_{\eps}|^{q+1}dx=k_5\eps^{(\f{N-2s}{4})(q+1)}$.\\
{\it Case 2} : $q=\f{2s}{N-2s}$.\\
Then 
$$\int_0^ \f{R'}{\sqrt{\eps}}\f{r^{N-1}dr}{(1+r^2)^\f{(N-2s)(q+1)}{2}}\geq C\int_{1}^{\f{R'}{\sqrt{\eps}}}r^{{N-1}-(N-2s)(q+1)}dr\geq C'|\mbox{ln}\ \eps|.$$
\bea\int_0^ \f{R}{\sqrt{\eps}}\f{r^{N-1}dr}{(1+r^2)^\f{(N-2s)(q+1)}{2}}&\leq&\int_{0}^{1}r^{N-1}dr+\int_{1}^{\f{R}{\sqrt{\eps}}}r^{{N-1}-(N-2s)(q+1)}dr\no\\
&\leq& C(1+|\mbox{ln}\ \eps|)
\leq 2C|\mbox{ln}\ \eps|.\no\eea
Substituting back the above two expressions in \eqref{4-1}, 
we have \\$\displaystyle\Iom |u_{\eps}|^{q+1}dx =k_6\eps^\f{N}{4}|ln\ \eps|$.\\
{\it Case 3} :  $\f{2s}{N-2s}<q<1$.\\
Therefore $(N-2s)(q+1)>N$ and consequently
$$\int_0^ \f{R'}{\sqrt{\eps}}\f{r^{N-1}dr}{(1+r^2)^\f{(N-2s)(q+1)}{2}}\geq C\int_{0}^{1}r^{N-1}dr=C,$$
$$\int_0^ \f{R}{\sqrt{\eps}}\f{r^{N-1}dr}{(1+r^2)^\f{(N-2s)(q+1)}{2}}\leq\int_{0}^{1}r^{N-1}dr+\int_{1}^{\infty}r^{{N-1}-(N-2s)(q+1)}\leq C,$$
 Hence $\displaystyle\Iom |u_{\eps}|^{q+1}dx =k_7\eps^{\f{N}{2}-\f{(N-2s)(q+1)}{4}}$.
\end{proof}

\vspace{4mm}

Set 
\be\lab{tl-mu}
\tilde\mu=\bigg(\frac{1-q}{2^*-q-1}\bigg)^{\frac{1-q}{2^*-2}}\frac{2^*-2}{2^*-q-1}|\Om|^{\frac{q+1-2^*}{2^*}}S_s^{\frac{N(1-q)}{4s}+\frac{q+1}{2}}. 
\ee
Next we prove three basic lemma.
\begin{lemma}\label{l:4-iii}
Let $\mu \in (0,\tilde\mu). $ For every $u \in X_0,\  u \neq 0, $ there exists unique 
$$0<t^-(u)<t_0(u)=
\bigg(\frac{(1-q)\norm{u}^2}{(2^*-1-q)|u|^{2^*}_{L^{2^*}(\Om)}}\bigg)^{\frac{N-2s}{4s}}<t^+(u), $$ such that 
\begin{align}
&t^-(u)u \in \mathcal{N}^+ \quad\mbox{and}\quad I_{\mu}(t^-u)=\min_{t \in [0,t_0]}I_{\mu}(tu), \notag\\ 
&t^+(u)u \in \mathcal{N}^- \quad\mbox{and}\quad I_{\mu}(t^+u)=\max_{t \geq t_0}I_{\mu}(tu). \notag
\end{align}
\end{lemma}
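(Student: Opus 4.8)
The plan is to study the fibering map $\varphi_u(t) := I_\mu(tu)$ for fixed $u \in X_0 \setminus \{0\}$, $t \geq 0$, and locate its critical points. Writing $a := \|u\|^2$, $b := |u|^{q+1}_{L^{q+1}(\Om)}$, $c := |u|^{2^*}_{L^{2^*}(\Om)}$ (all positive), we have
\[
\varphi_u(t) = \frac{a}{2}t^2 - \frac{\mu b}{q+1}t^{q+1} - \frac{c}{2^*}t^{2^*},
\]
and the condition $tu \in \mathcal{N}$ is precisely $\varphi_u'(t) = 0$, i.e. $a t^2 - \mu b t^{q+1} - c t^{2^*} = 0$, equivalently (dividing by $t^{q+1} > 0$) $h(t) = \mu b$ where $h(t) := a t^{1-q} - c t^{2^*-1-q}$. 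The first step is to analyze $h$ on $(0,\infty)$: it vanishes at $0$ and at infinity, is positive for small $t$, and $h'(t) = (1-q) a t^{-q} - (2^*-1-q) c t^{2^*-2-q}$ has a unique zero at exactly $t_0(u) = \big(\frac{(1-q)a}{(2^*-1-q)c}\big)^{\frac{N-2s}{4s}}$ (noting $\frac{1}{2^*-2} = \frac{N-2s}{4s}$), so $h$ is strictly increasing on $(0,t_0)$, strictly decreasing on $(t_0,\infty)$, with global maximum $h(t_0)$.

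The second step is to compute $h(t_0)$ explicitly and show that $h(t_0) > \mu b$ whenever $\mu < \tilde\mu$; this is where the precise form of $\tilde\mu$ in \eqref{tl-mu} enters. Substituting $t_0$ into $h$ gives $h(t_0) = \kappa \, a^{\frac{2^*-1-q}{2^*-2}} c^{-\frac{1-q}{2^*-2}}$ for an explicit constant $\kappa = \kappa(q,N,s) > 0$. Combining this with the Sobolev inequality \eqref{in:S} in the form $c^{2/2^*} \leq S_s^{-1} a$ and the Hölder bound $b \leq |\Om|^{\frac{2^*-q-1}{2^*}} c^{\frac{q+1}{2^*}}$, one checks by a direct power-counting computation that $h(t_0) - \mu b \geq a^{\frac{2^*-1-q}{2^*-2}} c^{-\frac{1-q}{2^*-2}}\big(\kappa - \mu \,\tilde\mu^{-1}\big)$ times a positive factor — the exponents of $a$ and $c$ are designed to match — so $\mu < \tilde\mu$ forces $h(t_0) > \mu b$ strictly. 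Consequently the horizontal line at height $\mu b$ crosses the graph of $h$ exactly twice: once at some $t^-(u) \in (0, t_0)$ on the increasing branch and once at some $t^+(u) \in (t_0, \infty)$ on the decreasing branch. These are the two positive critical points of $\varphi_u$.

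The third step is to identify which Nehari component each belongs to and the corresponding extremal property. Differentiating the constraint, the sign of $(1-q)\|tu\|^2 - (2^*-q-1)|tu|_{L^{2^*}}^{2^*} = t^{q+1}\big((1-q)a t^{1-q} - (2^*-q-1) c t^{2^*-1-q}\big)$ along $\mathcal{N}$ is governed by $h'$ evaluated at that point: at $t^-$ we have $t^- < t_0$ so $h'(t^-) > 0$, giving $t^- u \in \mathcal{N}^+$, and at $t^+$ we have $h'(t^+) < 0$, giving $t^+ u \in \mathcal{N}^-$. For the extremal statements, note $\varphi_u'(t) = t^{q+1}(h(t) - \mu b)$, so $\varphi_u'$ is negative on $(0,t^-)$, positive on $(t^-, t^+)$, negative on $(t^+,\infty)$; hence $\varphi_u$ decreases then increases then decreases. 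Since $t^- < t_0 < t^+$, on $[0,t_0]$ the map $\varphi_u$ first decreases to its value at $t^-$ then increases, so $I_\mu(t^- u) = \min_{t \in [0,t_0]} I_\mu(tu)$; on $[t_0,\infty)$ it first increases to $t^+$ then decreases, so $I_\mu(t^+ u) = \max_{t \geq t_0} I_\mu(tu)$. Uniqueness of $t^-, t^+$ in their respective intervals follows from the strict monotonicity of $h$ on each branch.

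The main obstacle is the second step: verifying that the algebraic constant $\kappa$ arising from evaluating $h$ at $t_0$, when combined with the Sobolev and Hölder constants, reproduces exactly the threshold $\tilde\mu$ of \eqref{tl-mu}. This is a bookkeeping computation with fractional exponents $\frac{1-q}{2^*-2}$, $\frac{2^*-q-1}{2^*-2}$, $\frac{N-2s}{4s}$ and the power $\frac{N(1-q)}{4s} + \frac{q+1}{2}$ of $S_s$; the delicate point is that the exponents of $a$ and of $c$ in $h(t_0)$ and in $\mu b$ (after applying Hölder and Sobolev) must cancel so that the inequality $h(t_0) > \mu b$ becomes a clean condition on $\mu$ alone. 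Everything else — the shape of $h$, the two-crossing picture, and the min/max identification — is routine single-variable calculus once this estimate is in place.
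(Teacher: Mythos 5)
Your proposal is correct and follows essentially the same route as the paper: the fibering map $t\mapsto I_\mu(tu)$, the auxiliary function $h(t)=\|u\|^2t^{1-q}-|u|^{2^*}_{L^{2^*}(\Om)}t^{2^*-1-q}$ (the paper's $\phi$), its unique maximum at $t_0$, the Sobolev--H\"older estimate showing $h(t_0)>\mu|u|^{q+1}_{L^{q+1}(\Om)}$ precisely when $\mu<\tilde\mu$, and the two-crossing/sign analysis identifying $t^-u\in\mathcal{N}^+$, $t^+u\in\mathcal{N}^-$ with the stated min/max properties; the power-counting you defer does check out exactly as in the paper's \eqref{4-4}. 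The only blemish is the harmless typo $\varphi_u'(t)=t^{q+1}(h(t)-\mu b)$, which should read $t^{q}(h(t)-\mu b)$, and this does not affect the sign analysis.
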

\begin{proof}
From \eqref{I-mu},  for $t \geq 0$, 
 \be
  I_{\mu}(tu)=\frac{t^2}{2}\norm{u}^2-\frac{\mu t^{q+1}}{q+1}|u|^{q+1}_{L^{q+1}(\Om)}-\frac{t^{2^*}}{2^*}|u|^{2^*}_{L^{2^*}(\Om)}. \no
  \ee
 Therefore \be\frac{\pa}{\pa t}I_{\mu}(tu)=t^q\bigg(t^{1-q}\norm{u}^2-t^{2^*-q-1}|u|^{2^*}_{L^{2^*}(\Om)}-\mu|u|^{q+1}_{L^{q+1}(\Om)}\bigg) .\no\ee
Define 
\be\lab{eq:phi}
\phi(t)=t^{1-q}\norm{u}^2-t^{2^*-q-1}|u|^{2^*}_{L^{2^*}(\Om)}. 
\ee
By a straight forward computation, it follows that $\phi$ attains maximum at  
the point 
\be\lab{t-0}
t_0=t_0(u)=\bigg(\frac{(1-q)\norm{u}^2}{(2^*-1-q)|u|^{2^*}_{L^{2^*}(\Om)}}\bigg)^{\frac{1}{2^*-2}} .
\ee
 Thus
\be\lab{4-3'} \phi'(t_0)=0, \quad \phi'(t) >0 \quad\mbox{if}\quad t<t_0, \quad \phi'(t) <0 \quad\mbox{if}\quad t>t_0.\ee
Moreover, $\phi(t_0)=\big(\frac{1-q}{2^*-1-q}\big)^{\frac{1-q}{2^*-2}}\big(\frac{2^*-2}{2^*-1-q}\big)
\bigg(\frac{\norm{u}^{2(2^*-q-1)}}{|u|^{2^*(1-q)}_{L^{2^*}(\Om)}}\bigg)^{\frac{N-2s}{4s}}. $
Therefore using \eqref{in:S}, we have
\be\lab{4-4}
\phi(t_0) \geq \displaystyle\left(\frac{1-q}{2^*-1-q}\right)^{\frac{(1-q)(N-2s)}{4s}}\frac{2^*-2}{2^*-1-q}S_s^{\frac{N(1-q)}{4s}}\norm{u}^{q+1}. 
\ee
Using H\"older inequality followed by Sobolev inequality \eqref{in:S},  and the fact that $\mu\in (0,\tilde\mu)$, we obtain 
\bea
\mu \Iom |u|^{q+1}dx \leq \mu \norm{u}^{q+1}S^{-(q+1)/2}|\Om|^{\frac{2^*-q-1}{2^*}}&\leq& \tilde\mu \norm{u}^{q+1}S_s^{-(q+1)/2}|\Om|^{\frac{2^*-q-1}{2^*}} \no\\
&\leq& \phi(t_0),\no
\eea
where in the last inequality we have used expression of $\tilde\mu$ (see \eqref{tl-mu}) and \eqref{4-4}.
Hence, there exists $t^+(u)>t_0>t^-(u)$ such that
\be\lab{4-5}\phi(t^+)=\mu\Iom|u|^{q+1}=\phi(t^-)\quad\mbox{and}\quad \phi'(t^+)<0<\phi'(t^-). \ee
\includegraphics[scale=0.25]{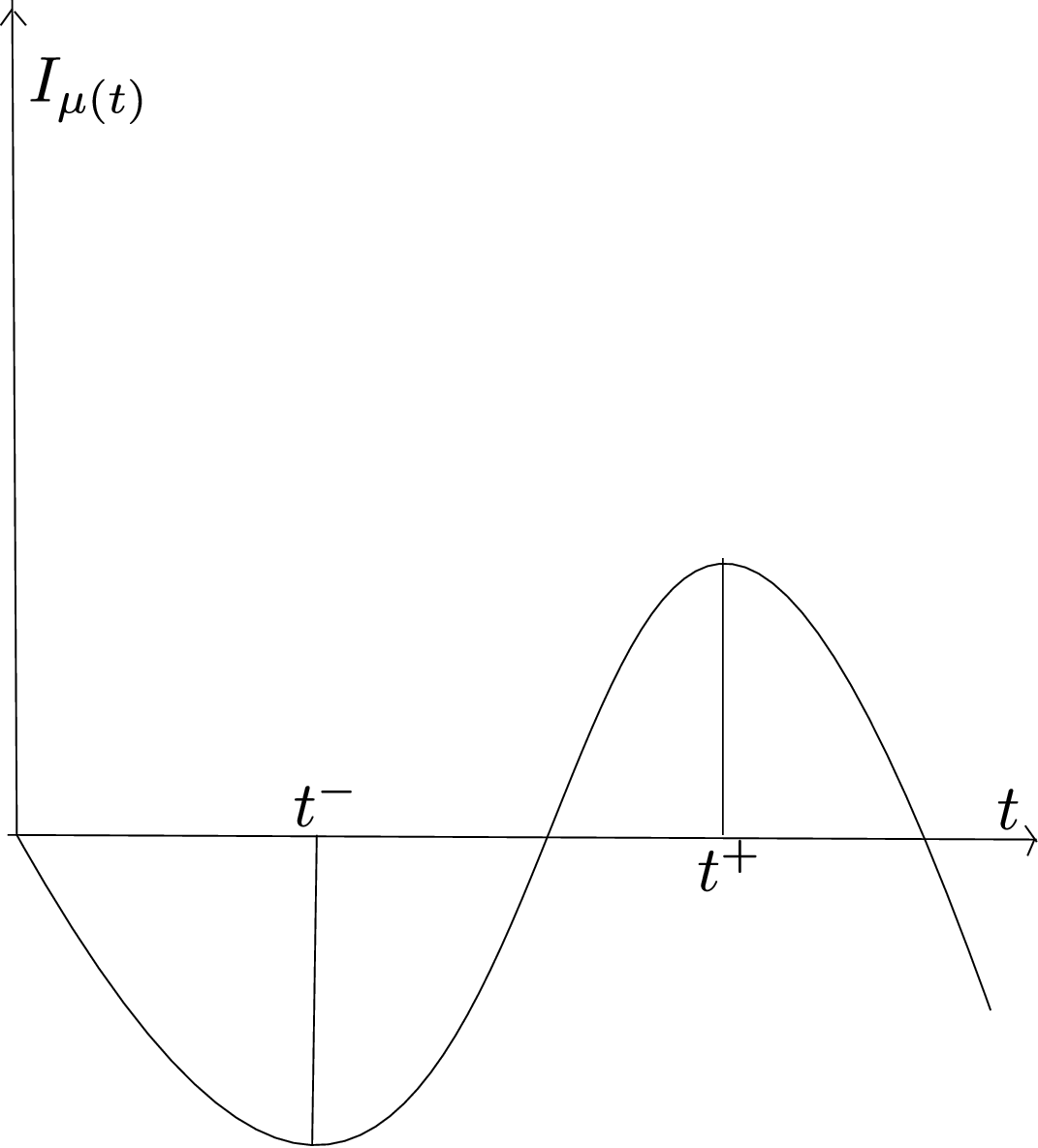} \hspace{2cm}
\includegraphics[scale=0.25]{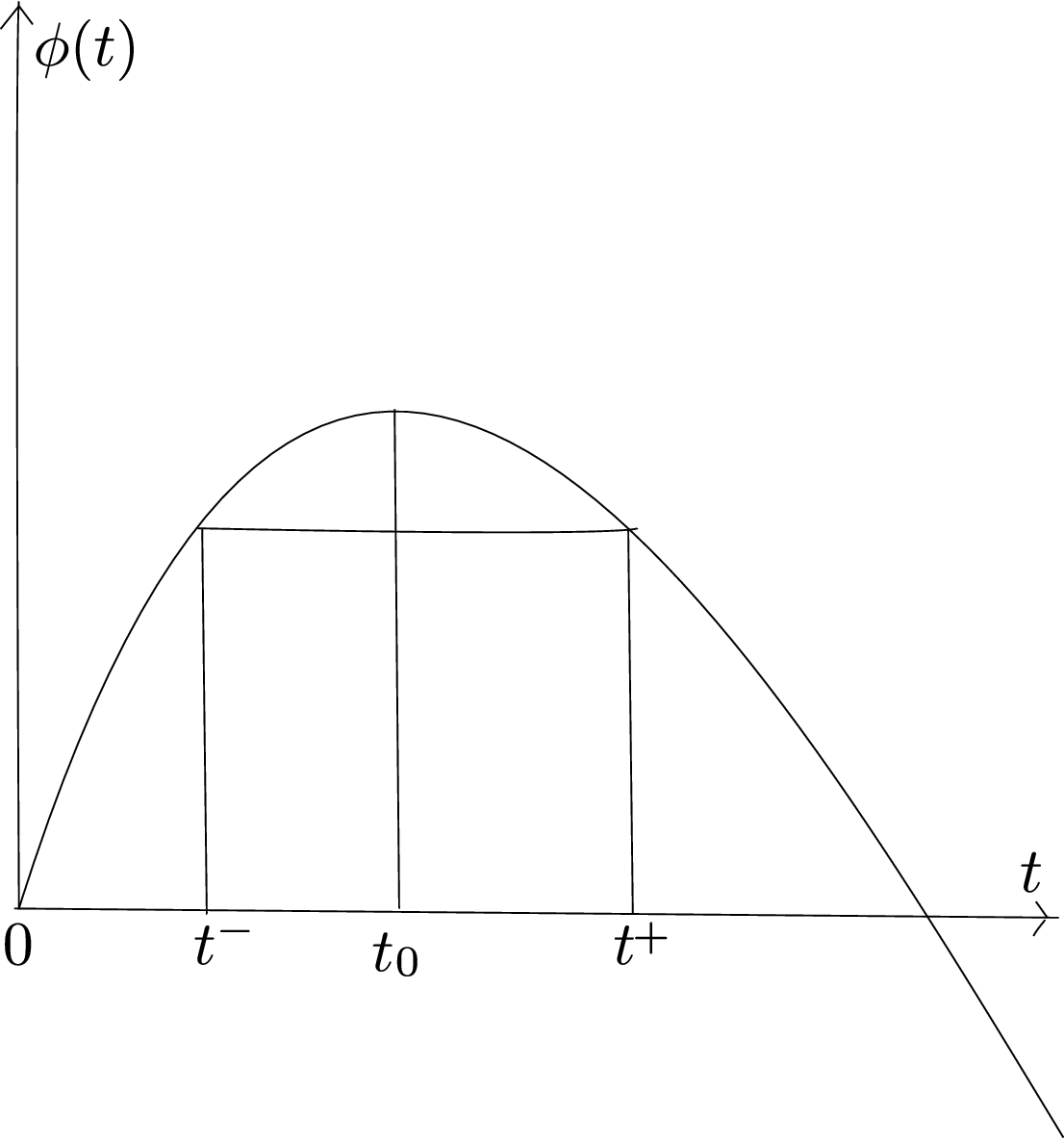}

This in turn, implies $t^+u \in \mathcal{N}^-$ and  $t^-u \in \mathcal{N}^+ $. Moreover, using \eqref{4-3'} and \eqref{4-5} 
in the expression of $\frac{\pa}{\pa t}I_{\mu}(tu)$, we have
$$\quad \frac{\pa}{\pa t}I_{\mu}(tu) > 0 \quad\mbox{when}\quad t\in(t^-,t^+) \quad\text{and}$$
$$\frac{\pa}{\pa t}I_{\mu}(tu) < 0 \quad \mbox{when} \quad t\in[0,t^-)\cup(t^+,\infty),$$
$$\frac{\pa}{\pa t}I_{\mu}(tu)=0 \quad\text{when}\quad t=t^{\pm}.$$
We note that $I_{\mu}(tu)=0$ at $t=0$ and strictly negative when $t>0$ is small enough. Therefore it is easy to conclude that 
$$\max_{t \geq t_0}I_{\mu}(tu)=I_{\mu}(t^+u) \quad\mbox{and} \quad \min_{t \in [0,t_0]}I_{\mu}(tu)=I_{\mu}(t^-u).$$
\end{proof}

\begin{lemma}\label{l:4-iv}
Let $\tilde\mu$ be  defined as in $\eqref{tl-mu}$.  Then $\mu \in (0,\tilde\mu), $ implies $\mathcal{N}_0=\emptyset$. 
\end{lemma}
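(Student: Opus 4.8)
The plan is to argue by contradiction: assuming $\mathcal{N}_0 \neq \emptyset$ and picking $u \in \mathcal{N}_0$, I will derive $\mu \geq \tilde\mu$, contradicting the hypothesis $\mu \in (0,\tilde\mu)$. The argument is essentially a byproduct of the fibering analysis already carried out in the proof of Lemma \ref{l:4-iii}.

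First I would unpack what membership in $\mathcal{N}_0$ encodes. Since $u \in \mathcal{N}$, the identity $\langle I'_{\mu}(u),u\rangle = 0$ gives
$$\norm{u}^2 = \mu|u|^{q+1}_{L^{q+1}(\Om)} + |u|^{2^*}_{L^{2^*}(\Om)},$$
while the defining relation of $\mathcal{N}_0$ reads $(1-q)\norm{u}^2 = (2^*-q-1)|u|^{2^*}_{L^{2^*}(\Om)}$. Comparing the latter with formula \eqref{t-0} for $t_0(u)$ shows it is precisely the statement $t_0(u) = 1$; equivalently, with $\phi$ as in \eqref{eq:phi}, the fact that $\frac{\pa}{\pa t}I_{\mu}(tu)$ vanishes at $t=1$ (which holds because $u \in \mathcal{N}$) becomes $\phi(t_0(u)) = \phi(1) = \mu|u|^{q+1}_{L^{q+1}(\Om)}$.

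On the other hand, the chain of estimates inside the proof of Lemma \ref{l:4-iii} shows that for every $w \in X_0 \setminus \{0\}$, H\"older's inequality, the Sobolev inequality \eqref{in:S}, and the lower bound \eqref{4-4} for $\phi(t_0(w))$ combine to give
$$\mu\Iom|w|^{q+1}\,dx \;\leq\; \mu\,\norm{w}^{q+1}S_s^{-(q+1)/2}|\Om|^{\frac{2^*-q-1}{2^*}} \;<\; \tilde\mu\,\norm{w}^{q+1}S_s^{-(q+1)/2}|\Om|^{\frac{2^*-q-1}{2^*}} \;\leq\; \phi(t_0(w)),$$
where the strict middle inequality uses $\mu < \tilde\mu$ together with $w \neq 0$, and the last inequality uses the explicit value \eqref{tl-mu} of $\tilde\mu$ (one uses $\frac{1}{2^*-2} = \frac{N-2s}{4s}$ to check that the constants match on the nose). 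Taking $w = u$ contradicts the equality $\phi(t_0(u)) = \mu|u|^{q+1}_{L^{q+1}(\Om)}$ obtained above; hence no such $u$ exists and $\mathcal{N}_0 = \emptyset$.

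If one prefers not to invoke the internal estimates of Lemma \ref{l:4-iii}, the same contradiction follows by a self-contained computation: the two displayed identities give $\frac{2^*-2}{2^*-q-1}\norm{u}^2 = \mu|u|^{q+1}_{L^{q+1}(\Om)}$; bounding the right-hand side from above by H\"older and \eqref{in:S} bounds $\norm{u}^{1-q}$ from above, while the $\mathcal{N}_0$-identity together with \eqref{in:S} gives the lower bound $\norm{u}^{2^*-2} \geq \frac{1-q}{2^*-q-1}S_s^{2^*/2}$; inserting the latter into the former and simplifying the exponents (using $\frac{2^*}{2(2^*-2)} = \frac{N}{4s}$) produces exactly $\mu \geq \tilde\mu$. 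I do not expect a genuine analytic obstacle in either route; the one point that needs care is the bookkeeping of exponents, so that the constant which emerges coincides with $\tilde\mu$ precisely as defined in \eqref{tl-mu}.
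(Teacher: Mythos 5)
Your proposal is correct and follows essentially the paper's own argument: the ``self-contained'' variant at the end (combining $\frac{2^*-2}{2^*-q-1}\norm{u}^2=\mu|u|^{q+1}_{L^{q+1}(\Om)}$ with the H\"older--Sobolev upper bound and the lower bound $\norm{u}^{2^*-2}\geq \frac{1-q}{2^*-q-1}S_s^{2^*/2}$ coming from the $\mathcal{N}_0$-identity) is exactly the paper's computation, merely phrased as deriving $\mu\geq\tilde\mu$ instead of the contradiction $0>0$. The first variant, via $t_0(u)=1$ and $\phi(t_0)=\mu|u|^{q+1}_{L^{q+1}(\Om)}$ against the strict inequality from Lemma \ref{l:4-iii}, is just a repackaging of the same estimates and is also fine.
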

\begin{proof}
 Suppose not. Then there exists $ w \in \mathcal{N}_0$ such that $w\not=0$ and
 \begin{align}\label{4-6}
(1-q)\norm{w}^2-(2^*-q-1)|w|^{2^*}_{L^{2^*}(\Om)}=0.
\end{align}
The above expression combined with Sobolev inequality \eqref{in:S} yields
\begin{align}\label{4-7}
\norm{w} \geq S_s^{N/4s}\displaystyle\left(\frac{1-q}{2^*-1-q}\right)^{\frac{N-2s}{4s}}.
\end{align}
As $w \in \mathcal{N}_0 \subseteq \mathcal{N}$, using \eqref{4-6} and H\"older inequality followed by Sobolev inequality, we get
\Bea
 0&=&\norm{w}^2-|w|^{2^*}_{L^{2^*}(\Om)}-\mu|w|^{q+1}_{L^{q+1}(\Om)}\\
 &\geq&\norm{w}^2-\displaystyle\left(\f{1-q}{2^*-q-1}\right)||w||^2-\mu|\Om|^{1-\frac{q+1}{2^*}}S_s^{-(q+1)/2}||w||^{q+1}.
\Eea 
Combining the above inequality with \eqref{4-7} and using $\mu<\tilde\mu$, we have
\bea 0&\geq&\norm{w}^{q+1}\displaystyle\left[\bigg(\frac{2^*-2}{2^*-q-1}\bigg)\bigg(\frac{1-q}{2^*-q-1}\bigg)^{\frac{(N-2s)(1-q)}{4s}}S_s^{\frac{N(1-q)}{4s}}-
 \mu|\Om|^{1-\frac{q+1}{2^*}}S_s^{-(q+1)/2}\right]\no\\
 &>& 0,\no\eea
which is a contradiction. This completes the proof.
\end{proof}
\begin{lemma}\label{l:4-v}
Let $\tilde\mu$ be defined as in $\eqref{tl-mu}$ and $\mu \in (0,\tilde\mu)$. Given $u \in \mathcal{N}, $ there exists $\rho_u>0$ and a differential function $g_{\rho_u}:B_{\rho_u}(0) \to \R^+$
satisfying the following:
\begin{align}
 &g_{\rho_u}(0)=1, \notag\\
 &\big(g_{\rho_u}(w)\big)(u+w) \in \mathcal{N}\quad \forall\quad w \in B_{\rho_u}(0), \notag\\
 &\<g'_{\rho_u}(0),\phi\>=\frac{2\<u,\phi\>-2^*\Iom |u|^{2^*-2}u\phi-(q+1)\mu\Iom |u|^{q-1}u \phi}{(1-q)\norm{u}^2-
       (2^*-q-1)|u|^{2^*}_{L^{2^*}(\Om)}}. \notag
\end{align}
\end{lemma}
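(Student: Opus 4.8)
The plan is to apply the implicit function theorem to the Nehari constraint, viewed as a functional on $X_0$, localized near the given point $u\in\mathcal{N}$. Concretely, define $F:X_0\times X_0\to\mathbb{R}$ by
\[
F(w,t):=t^2\|u+w\|^2-\mu t^{q+1}\Iom|u+w|^{q+1}dx-t^{2^*}\Iom|u+w|^{2^*}dx,
\]
which is of class $C^1$ on $X_0\times(0,\infty)$ thanks to the Sobolev embedding $X_0\hookrightarrow L^{2^*}(\Om)$ and the fact that $0<q<1<2^*-1$ (so the nonlinear terms are $C^1$ in $w$). Since $u\in\mathcal{N}$ we have $F(0,1)=0$. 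The partial derivative in $t$ at $(0,1)$ is
\[
\frac{\partial F}{\partial t}(0,1)=2\|u\|^2-(q+1)\mu\Iom|u|^{q+1}dx-2^*\Iom|u|^{2^*}dx=(1-q)\|u\|^2-(2^*-q-1)|u|^{2^*}_{L^{2^*}(\Om)},
\]
where I used $\<I'_\mu(u),u\>=0$ to eliminate one of the three integral terms. By Lemma \ref{l:4-iv}, since $\mu\in(0,\tilde\mu)$ we have $\mathcal{N}_0=\emptyset$, so this quantity is nonzero. Hence the implicit function theorem yields $\rho_u>0$ and a $C^1$ map $g_{\rho_u}:B_{\rho_u}(0)\to\mathbb{R}^+$ (positivity for $\rho_u$ small, by continuity and $g_{\rho_u}(0)=1$) with $F(w,g_{\rho_u}(w))=0$ for all $w\in B_{\rho_u}(0)$; that is, $g_{\rho_u}(w)(u+w)\in\mathcal{N}$, and $g_{\rho_u}(0)=1$.

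It remains to compute $\<g'_{\rho_u}(0),\phi\>$, which is just the implicit-differentiation formula: differentiating $F(w,g_{\rho_u}(w))\equiv 0$ at $w=0$ in direction $\phi$ gives $\frac{\partial F}{\partial w}(0,1)\phi+\frac{\partial F}{\partial t}(0,1)\<g'_{\rho_u}(0),\phi\>=0$, so
\[
\<g'_{\rho_u}(0),\phi\>=-\frac{\frac{\partial F}{\partial w}(0,1)\phi}{\frac{\partial F}{\partial t}(0,1)}=\frac{2\<u,\phi\>-2^*\Iom|u|^{2^*-2}u\phi\,dx-(q+1)\mu\Iom|u|^{q-1}u\phi\,dx}{(1-q)\|u\|^2-(2^*-q-1)|u|^{2^*}_{L^{2^*}(\Om)}},
\]
which is exactly the claimed expression. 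Here $\frac{\partial F}{\partial w}(0,1)\phi=2\<u,\phi\>-(q+1)\mu\Iom|u|^{q-1}u\phi\,dx-2^*\Iom|u|^{2^*-2}u\phi\,dx$, using that the Fréchet derivatives of $w\mapsto\|u+w\|^2$, $w\mapsto\Iom|u+w|^{q+1}$, $w\mapsto\Iom|u+w|^{2^*}$ at $w=0$ are $2\<u,\cdot\>$, $(q+1)\Iom|u|^{q-1}u\,(\cdot)$, $2^*\Iom|u|^{2^*-2}u\,(\cdot)$ respectively.

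The main technical point — the only place any care is needed — is the $C^1$ regularity of $w\mapsto\Iom|u+w|^{q+1}dx$ near $w=0$ when $0<q<1$, since $t\mapsto|t|^{q+1}$ is $C^1$ on $\mathbb{R}$ with derivative $(q+1)|t|^{q-1}t$ but the second derivative blows up at $0$. This is standard: one checks Gateaux differentiability pointwise, and then uniform integrability of the difference quotients via the elementary inequality $\big||a+b|^{q+1}-|a|^{q+1}\big|\le C(|a|^q|b|+|b|^{q+1})$ together with Hölder and the Sobolev embedding $X_0\hookrightarrow L^{q+1}(\Om)$ (as $\Om$ is bounded and $q+1<2^*$), which promotes Gateaux to Fréchet differentiability and continuity of the derivative. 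The $L^{2^*}$ term is handled identically (in fact more easily, $2^*>2$). Once this regularity is in hand, everything else is a direct invocation of the implicit function theorem, and the nonvanishing of the denominator is precisely Lemma \ref{l:4-iv}. I expect no genuine obstacle beyond this routine verification.
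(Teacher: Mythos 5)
Your argument is essentially the paper's own proof: the paper applies the implicit function theorem to $F(t,w)=t^{1-q}\|u+w\|^2-t^{2^*-q-1}|u+w|^{2^*}_{L^{2^*}(\Om)}-\mu|u+w|^{q+1}_{L^{q+1}(\Om)}$, which is just your $F$ divided by $t^{q+1}$, so at the base point (where $F=0$) the two choices have identical partial derivatives, and your use of the Nehari identity to simplify $\partial_t F$ plus the explicit appeal to Lemma \ref{l:4-iv} for the nonvanishing denominator are the only cosmetic differences. One small caveat: the implicit-function formula gives $\<g'_{\rho_u}(0),\phi\>=-\,\partial_w F(0,1)\phi\,/\,\partial_t F(0,1)$, i.e.\ the negative of the displayed expression, so your final equality silently drops a minus sign; this sign discrepancy is already present in the paper's own statement (its proof likewise derives the formula with the minus sign), and it is harmless because the later arguments use only the uniform boundedness of $|\<g'_n(0),v\>|$.
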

\begin{proof}
 Define $F:\R^{+} \times X_0 \to \R$ as follows:
 $$F(t,w)=t^{1-q}\norm{u+w}^2-t^{2^*-q-1}|u+w|^{2^*}_{L^{2^*}(\Om)}-\mu |u+w|^{q+1}_{L^{q+1}(\Om)}. $$
We note that $u \in \mathcal{N}$ implies
 $$F(1,0)=0,  \quad\text{and}\quad \frac{\pa F}{\pa t}(1,0)=(1-q)\norm{u}^2-(2^*-q-1)|u|^{2^*}_{L^{2^*}(\Om)}\not=0. $$
Therefore, by Implicit function theorem, there exists neighbourhood $B_{\rho_u}(0)$ for some $\rho_u>0$ and a $C^1$ function
 $g_{\rho_u}:B_{\rho_u}(0) \to \R^+$ such that
 \begin{align}
  &(i)\ g_{\rho_u}(0)=1, \quad (ii)\ F(g_{\rho_u}(w),w)=0, \,\, \forall\  w \in B_{\rho_u}(0), \no\\ 
  &(iii) F_t(g_{\rho_u}(w),w)\not=0, \,\, \forall\  w \in B_{\rho_u}(0), \quad
  (iv)\  \<g'_{\rho_u}(0),\phi\>=-\frac{\<\frac{\pa F}{\pa w}(1,0),\phi\>}{\frac{\pa F}{\pa t}(1,0)}. \notag
 \end{align}
 Multiplying (ii) by $(g_{\rho_u}(w))^{q+1}$, it follows that $(g_{\rho_u}(w))(u+w)\in \mathcal{N}$. In fact, simplifying (iii), we obtain
 $$(1-q)(g_{\rho_u}(w))^2||u+w||^2-(2^*-q-1)(g_{\rho_u}(w))^{2^*}|u+w|_{2^*}^{2^*}\not=0 \quad\forall\  w \in B_{\rho_u}(0).$$ Thus $\big(g_{\rho_u}(w)\big)(u+w)\in \mathcal{N}^{-}\cup \mathcal{N}^{+}$ for every $  w \in B_{\rho_u}(0)$. The last assertion of the lemma follows from (iv).
 \end{proof}
\subsection{Sign changing critical points of $I_{\mu}$}

Define $$\mathcal{N}^{-}_1:=\{u\in \mathcal{N}: u^{+}\in \mathcal{N}^{-}\},$$
$$\mathcal{N}^{-}_2:=\{u\in \mathcal{N}: -u^{-}\in \mathcal{N}^{-}\},$$
We set
\begin{align}\label{ba-1-2}
 \ba_1=\inf_{u \in \mathcal{N}_1^-}I_\mu(u) \quad\text{and}\quad \ba_2=\inf_{u \in \mathcal{N}_2^-}I_\mu(u).
\end{align}

\begin{theorem} \label{t:4ii}
Assume $0<\mu<\text{min}\{\tilde\mu, \mu_{*}, \mu_1\}$, where $\mu_1$ is as in Lemma \ref{l:6-ii}, $\tilde\mu$ is as in \eqref{tl-mu} and $\mu_*$ is chosen such that $\tilde\al_{\mu}^-$ is achieved in $(0, \mu_*)$.  Let $\ba_1$, $\ba_2$, $\tilde\al_{\mu}^-$ be  defined as in \eqref{ba-1-2} and \eqref{eq:tl-al} respectively. 
\begin{itemize}
\item[(i)] Let $\ba_1<\tilde\al_{\mu}^{-}$. Then there exists a sign changing critical point $\tilde w_1$ of  $I_\mu$ such that  $\tilde w_1\in \mathcal{N}_1^-$ and $I_\mu(\tilde w_1)=\ba_1$.  
\item[(ii)]
If $\ba_2< \tilde\al_{\mu}^{-}$, then there exists a sign changing critical point $\tilde w_2$ of $I_\mu$ such that   in $\tilde w_2\in \mathcal{N}_1^-$ and $I_\mu(\tilde w_2)=\ba_2$.
 \end{itemize}
\end{theorem}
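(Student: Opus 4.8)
The plan is to treat the two items symmetrically (they differ only by replacing $u$ with $-u$), so I will focus on (i); item (ii) follows verbatim with the obvious sign changes. The strategy, following \cite{Chen,T} adapted to the nonlocal setting, is: first show $\mathcal{N}_1^-$ is nonempty and that the infimum $\ba_1$ is attained by a function whose positive and negative parts are both nonzero; then show the minimizer is a genuine critical point of $I_\mu$ on all of $X_0$ (not merely a constrained critical point); and finally rule out the possibility that the constrained minimizer is actually sign-definite, which is exactly where the hypothesis $\ba_1<\tilde\al_\mu^-$ enters.

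First I would take a minimizing sequence $\{u_n\}\subset\mathcal{N}_1^-$ for $\ba_1$. Using Lemma \ref{l:4-iii} one controls the fibering maps $t\mapsto I_\mu(tu)$, and since $u_n^+\in\mathcal{N}^-$ one gets (as in the proof of Lemma \ref{l:4-iv}) a uniform lower bound $\|u_n^+\|\geq\delta_0>0$; together with $u_n\in\mathcal{N}$ and $I_\mu(u_n)\to\ba_1$ a standard computation of $I_\mu(u_n)-\tfrac1{q+1}\langle I_\mu'(u_n),u_n\rangle$ gives boundedness of $\{u_n\}$ in $X_0$. Passing to a subsequence, $u_n\deb u$ in $X_0$, $u_n\to u$ in $L^\gamma(\Om)$ for $\gamma<2^*$ and a.e.; hence $u_n^\pm\to u^\pm$ strongly in $L^{q+1}$, a.e., and weakly in $X_0$. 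One then argues that the weak limit $u$ is admissible: $u^+\neq 0$ (by the uniform bound $\delta_0$ together with the Brezis--Lieb splitting of $\|u_n^+\|^2$ and $|u_n^+|_{2^*}^{2^*}$, the $L^{2^*}$-mass lost in the limit must sit below $S_s^{N/2s}$, which is excluded once we know $\ba_1<\tilde\al_\mu^-<\tfrac sN S_s^{N/2s}$ — this is where $\mu<\mu_1$ from Lemma \ref{l:6-ii} is used to control the energy level), and $u^-\neq 0$ similarly; using Lemma \ref{l:4-iii} one projects, if necessary, $u^+$ and $-u^-$ back onto $\mathcal{N}^-$ by scalars $t^+(u^+)$, $t^+(-u^-)$, and shows by weak lower semicontinuity of the norm plus the strong $L^{q+1}$ convergence that these scalars can be taken $\leq 1$ and that the resulting function $\tilde w_1:=t^+(u^+)u^+ - t^+(-u^-)u^-\in\mathcal{N}_1^-$ satisfies $I_\mu(\tilde w_1)\leq\ba_1$, hence $=\ba_1$, and moreover that in fact $u_n\to u$ strongly and $\tilde w_1=u$.

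Next I would show $\tilde w_1$ is an unconstrained critical point. Suppose not; then there is $\phi\in X_0$ with $\langle I_\mu'(\tilde w_1),\phi\rangle<0$. Using the implicit-function construction of Lemma \ref{l:4-v} (with $u=\tilde w_1^+$ and with $u=-\tilde w_1^-$ separately, since each lies in $\mathcal{N}^-$), one builds a deformation $s\mapsto g(s\phi)(\tilde w_1 + s\phi)$ staying in $\mathcal{N}_1^-$ for small $s>0$ along which $I_\mu$ strictly decreases, contradicting minimality — this is the usual quantitative deformation / fibering argument, and the smoothness of $g_{\rho_u}$ from Lemma \ref{l:4-v} is what makes it go through. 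Finally, to see $\tilde w_1$ is genuinely sign changing rather than, say, $\tilde w_1^-\equiv 0$ after the argument collapses: if $\tilde w_1$ were sign-definite it would lie in $\mathcal{N}^-$ (its positive part being all of it) and hence $I_\mu(\tilde w_1)=\ba_1\geq\inf_{\mathcal{N}^-}I_\mu=\tilde\al_\mu^-$, directly contradicting the hypothesis $\ba_1<\tilde\al_\mu^-$; so both $\tilde w_1^+$ and $\tilde w_1^-$ are nontrivial and $\tilde w_1$ changes sign.

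The main obstacle I expect is the compactness step: establishing strong convergence $u_n\to u$ in $X_0$ for the constrained minimizing sequence when $p=2^*-1$. Because $X_0\hookrightarrow L^{2^*}(\Om)$ is not compact, one must rule out concentration, and the only lever is the strict energy inequality $\ba_1<\tilde\al_\mu^-$ together with a quantitative bound $\tilde\al_\mu^-<\tfrac sN S_s^{N/2s}$ (valid for $\mu$ small, via Lemma \ref{l:6-ii} and the test-function estimates of Lemmas \ref{l:4i}--\ref{l:4ii}, which is precisely why $N>6s$ and $\tfrac12\big(\tfrac{N+2s}{N-2s}\big)<q<1$ are imposed). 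Carefully tracking the Brezis--Lieb decomposition on \emph{both} $u_n^+$ and $u_n^-$, and showing that any nonzero bubble would push the energy at or above $\tfrac sN S_s^{N/2s}$, is the delicate part; everything else is a fairly mechanical transcription of the local concave–convex sign-changing argument into the $X_0$ framework.
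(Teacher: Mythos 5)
Your overall architecture (Nehari decomposition, use of the hypothesis $\ba_1<\tilde\al_{\mu}^{-}$ to prevent the minimizer from being sign-definite) is in the right spirit, but the proposal has genuine gaps at exactly the two delicate points. First, the compactness step as you order it does not close. You want to exclude concentration for a \emph{plain} minimizing sequence in $\mathcal{N}_1^-$, before knowing anything about $I'_\mu(u_n)$; but the Brezis--Lieb/bubble accounting you invoke needs $\<I'_\mu(u_n),u_n\>\to 0$ \emph{and} that the weak limit $u$ satisfies $\<I'_\mu(u),u\>=0$, i.e.\ it needs the sequence to be Palais--Smale. The paper resolves this by first upgrading the minimizing sequence: it takes an Ekeland sequence in the closed set $\mathcal{N}_1^-$ and uses the implicit-function map of Lemma \ref{l:4-v} together with the uniform bound on $\<g_n'(0),v\>$ from Lemma \ref{l:6-ii} to show $I'_\mu(u_n)\to 0$; this is precisely where the hypothesis $\mu<\mu_1$ enters, not ``to control the energy level'' as you state. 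Second, your energy threshold is too weak: because of the concave term the weak limit can have negative energy, so a nonzero bubble only forces $c\geq \frac{s}{N}S_s^{N/2s}-k\mu^{\frac{2^*}{2^*-q-1}}$ (see \eqref{3:c} and \eqref{3:k}), not $c\geq\frac{s}{N}S_s^{N/2s}$. Knowing merely $\ba_1<\tilde\al_{\mu}^{-}<\frac{s}{N}S_s^{N/2s}$ is therefore insufficient; the paper uses the quantitative estimate $\tilde\al_{\mu}^{-}<\frac{s}{N}S_s^{N/2s}-M\mu^{\frac{2^*}{2^*-q-1}}$ from \cite[Proposition 4.2]{CD}, checks $k=M$, and then invokes the (PS)$_c$ property established in the proof of Theorem \ref{thm.1}. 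Relatedly, the restrictions $N>6s$ and $\frac12\big(\frac{N+2s}{N-2s}\big)<q<1$, which you claim are ``imposed'' here via Lemmas \ref{l:4i}--\ref{l:4ii}, play no role in this theorem: they are needed only for Proposition \ref{p:limit} and Theorem \ref{t:4i}, i.e.\ the complementary case $\ba_1,\ba_2\geq\tilde\al_{\mu}^{-}$.

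There is also a gap in your ``minimize first, then deform'' step. Lemma \ref{l:4-v} only guarantees that the perturbed function $g(w)(u+w)$ lies in $\mathcal{N}$; your deformation argument needs it to stay in $\mathcal{N}_1^-$ (so that minimality of $\ba_1$ can be invoked), and membership of the \emph{positive part} of the perturbed function in $\mathcal{N}^-$ is not addressed, nor is the claim that the projection scalars $t^+(u^+)$, $t^+(-u^-)$ can be taken $\leq 1$ with $I_\mu(\tilde w_1)\leq\ba_1$ in the critical regime where $L^{2^*}$-mass may be lost. In the paper these difficulties are bypassed: strong $X_0$-convergence of the PS sequence (below the corrected threshold) plus Claim 3 ($\norm{u_n^-}\geq b$, which is where $\ba_1<\tilde\al_{\mu}^{-}$ is actually used) and the closedness of $\mathcal{N}_1^-$ (Claim 1, via $\mathcal{N}_0=\emptyset$, Lemma \ref{l:4-iv}) immediately give that the limit is a sign-changing critical point at level $\ba_1$. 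To repair your proposal you would essentially have to reinstate the Ekeland--implicit-function mechanism and the sharpened level estimate, i.e.\ follow the paper's route.
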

\begin{proof}
(i) Let $\ba_1< \tilde\al_{\mu}^{-}$.

\vspace{2mm}

{\bf Claim 1:}  $\mathcal{N}_1^-$ and $\mathcal{N}_2^-$ are closed sets. \\
To see this, let $\{u_n\} \subset \mathcal{N}_1^-$ such that $u_n \to u$ in $X_0$. It is easy to note that $|u_n|, |u|\in X_0$ and  $|u_n| \to |u|$ in $X_0$. This in turn implies 
$u_n^+ \to u^+$ in $X_0$ and  $L^\ga(\Rn)$ for $\ga \in [1,2^*]$ (by \eqref{in:S}). Since, $u_n \in \mathcal{N}_1^-$, we have  $u_n^+ \in \mathcal{N}^-$. Therefore
 \be\label{4-10}
  \norm{u_n^+}^2-|u_n^+|^{2^*}_{L^{2^*}(\Om)}-\mu|u_n^+|^{q+1}_{L^{q+1}(\Om)}=0
  \ee
  and
\be \label{4-11} 
  (1-q)\norm{u_n^+}^2-(2^*-q-1)|u_n^+|^{2^*}_{L^{2^*}(\Om)}<0 \,\,\forall\  n \geq 1. 
 \ee
Passing to the limit as $n\to\infty$, we obtain $u^+\in \mathcal{N}$ and $(1-q)\norm{u^+}^2-(2^*-q-1)|u^+|^{2^*}_{L^{2^*}(\Om)}\leq 0$. But, from Lemma \ref{l:4-iv}, we know $\mathcal{N}_0=\emptyset$. Therefore $u^+\in \mathcal{N}^{-}$ and hence $\mathcal{N}_1^{-}$ is closed. Similarly it can be shown that $\mathcal{N}_2^{-}$ is also closed. Hence claim 1 follows.

\vspace{2mm}

By Ekeland Variational Principle there exists sequence $\{u_n\} \subset \mathcal{N}_1^-$ 
such that 
\be\lab{4-11'}
I_\mu(u_n) \to \ba_1 \quad\text{and}\quad I_\mu(z) \geq I_\mu(u_n)-\frac{1}{n}\norm{u_n-z} \quad\forall \quad z\in \mathcal{N}_1^-.
\ee

\vspace{2mm}

{\bf Claim 2:}  $\{u_n\}$ is uniformly bounded in $X_0$. \\
To see this, we notice $u_n \in \mathcal{N}_1^-$ implies $u_n \in \mathcal{N}$ and this in turn implies $\<I'_\mu(u_n),u_n\>=0$, that is,
$$\norm{u_n}^2=|u_n|^{2^*}_{L^{2^*}(\Om)}+|u_n|^{q+1}_{L^{q+1}(\Om)}. $$
Since $I_\mu(u_n)\to\ba_1$, using the above equality in the expression of $I_\mu(u_n)$, we get, for $n$ large enough
\Bea
\frac{s}{N}\norm{u_n}^2&\leq& \ba_1+1+\displaystyle\left(\frac{1}{q+1}-\frac{1}{2^*}\right)|u_n|^{q+1}_{L^{q+1}(\Om)}\\
&\leq& C(1 + \norm{u_n}^{q+1}).
\Eea
 This implies $\{u_n\}$ is uniformly bounded in $X_0$.

\vspace{2mm}

{\bf Claim 3:}  There exists $b >0$ such that $\norm{u_n^-} \geq b$ for all $n \geq 1.$ \\
Suppose the claim is not true.  Then for each $k \geq 1,$ there exists $u_{n_k}$ such that
\begin{align}\label{4-12}
\norm{u_{n_k}^-}<\frac{1}{k}\,\,\, \forall\ k \ \geq 1.
\end{align}
We note that for any $u\in X_0$, we have
\bea\lab{4-13}
\norm{u}^2&=&\int_{\R^{2N}}\frac{|u(x)-u(y)|^2}{|x-y|^{N+2s}}dxdy\no\\
&=&\int_{\R^{2N}}\frac{|(u^+(x)-u^+(y))-(u^-(x)-u^-(y))|^2}{|x-y|^{N+2s}}dxdy\no\\
&=& \norm{u^+}^2+\norm{u^-}^2+2\int_{\R^{2N}}\frac{u^+(x)u^-(y)+u^+(y)u^-(x)}{|x-y|^{N+2s}}dxdy\no\\
&\geq& \norm{u^+}^2+\norm{u^-}^2
\eea
By a simple calculation, it follows 
\be\lab{4-14}|u|^{2^*}_{L^{2^*}(\Om)}=|u^+|^{2^*}_{L^{2^*}(\Om)}+|u^-|^{2^*}_{L^{2^*}(\Om)}\quad\text{and}\quad
|u|^{q+1}_{L^{q+1}(\Om)}=|u^+|^{q+1}_{L^{q+1}(\Om)}+|u^-|^{q+1}_{L^{q+1}(\Om)}. \ee
Combining \eqref{4-13} and \eqref{4-14}, we obtain
\be\lab{4-14'}
I_\mu(u) \geq I_\mu(u^+)+I_\mu(u^-) \quad\forall\quad u\in X_0.
\ee 
Moreover, $\eqref{4-12}$ implies 
$\norm{u_{n_k}^-} \to 0$ and therefore by Sobolev inequality 
$$|u_{n_k}^-|_{L^{2^*}(\Om)} \to 0,\,\,\,|u_{n_k}^-|_{L^{q+1}(\Om)} \to 0, \quad\text{as}\quad k \to \infty.$$
Consequently, $I_\mu(u_{n_k}^-) \to 0$ as $k \to \infty$.  As a result, we have
$$
\ba_1=I_\mu(u_{n_k})+o(1)\geq I_\mu(u^+_{n_k})+I_\mu(u^-_{n_k})+o(1)=J_{\mu}(u^+_{n_k})+o(1)\geq \tilde\al_{\mu}^{-}+o(1).
$$
This is a contradiction to the hypothesis. Hence  claim 3 follows.

\vspace{2mm}

{\bf Claim 4}: $I'_\mu(u_n) \to 0$ in $(X_0)'$ as $n \to \infty$.

Since $u_n\in \mathcal{N}^{-}_1\subset \mathcal{N}$, By Lemma \ref{l:4-v} applied to the element $u_{n}$, there exists 
\be\lab{g_n}
\rho_n :=\rho_{u_n} \quad\text{and}\quad g_n:=g_{\rho_{u_n}}
\ee such that 
\be\lab{4-15}
g_n(0)=1, \quad \big(g_n(w)\big)(u_n+w) \in N \quad\forall\quad w \in B_{\rho_n}(0).
\ee  Choose $0<\tilde\rho_n<\rho_n$ such that $\tilde\rho_n \to 0$.  Let $v\in X_0$ with $||v||=1$.

Define $$v_n := -\tilde\rho_n [v^+\chi_{\{u_n \geq 0\}}-v^-\chi_{\{u_n \leq 0\}}]$$
and 
\Bea
z_{\tilde\rho_n}&:=&\big(g_n(v_n^-)\big)(u_n-v_n)\\
&=:&z_{\tilde\rho_n}^1-z_{\tilde\rho_n}^2,
\Eea
where $z_{\tilde\rho_n}^1:=\big(g_n(v_n^-)\big)(u_n^{+} +\tilde\rho_n v^+\chi_{\{u_n \geq 0\}})$ and 
$z_{\tilde\rho_n}^2:=\big(g_n(v_n^-)\big)(u_n^{-} +\tilde\rho_n v^-\chi_{\{u_n \leq 0\}}).$
Note that $v_n^-=\tilde\rho_n v^+\chi_{\{u_n \geq 0\}}.$ So, $||v_n^-|| \leq \tilde\rho_n ||v||\leq \tilde\rho_n.$
Hence taking $w=v_n^-$ in \eqref{4-15}
 we have, $z_{\tilde\rho_n}^+=  z_{\tilde\rho_n}^1 \in \mathcal{N}^{-}$ so $z_{\tilde\rho_n} \in \mathcal{N}_{1}^-.$
Hence, 
$$I_\mu(z_{\tilde\rho_n}) \geq I_\mu(u_n)-\frac{1}{n} ||u_n-z_{\tilde\rho_n}||. $$

This implies,
\bea\label{22.02.E1}
\frac{1}{n}||u_n-z_{\tilde\rho_n}||&\geq& I_\mu(u_n)-I_\mu(z_{\tilde\rho_n})\notag\\
&=&\<I'_\mu(u_n),u_n-z_{\tilde\rho_n}\>+o(1) ||u_n-z_{\tilde\rho_n}||\notag\\
&=&-\<I'_\mu(u_n),z_{\tilde\rho_n}\>+o(1) ||u_n-z_{\tilde\rho_n}||,
\eea
as $\<I'_\mu(u_n),u_n\>=0 $ for all $n.$
Let $w_n=\tilde\rho_n v.$ Then,
\begin{align}\label{D1}
\frac{1}{n} ||u_n-z_{\tilde\rho_n}|| \geq -\<I'_\mu(u_n),w_n+z_{\tilde\rho_n}\>+\<I'_\mu(u_n),w_n\>\notag\\
             +o(1) ||u_n-z_{\tilde\rho_n}||.
\end{align}
Now, $\<I'_\mu(u_n),w_n\>=\<I'_\mu(u_n),\tilde\rho_n v\>=\tilde\rho_n\<I'_\mu(u_n), v\>.$
Define $$\overline{v_n}:=v^+\chi_{\{u_n \geq 0\}}-v^-\chi_{\{u_n \leq 0\}}.$$

So, $z_{\tilde\rho_n}=g_n(v_n^-)(u_n-\tilde\rho_n\overline{v_n}).$
Hence we have,
\begin{align}\label{D2}
\<I'_\mu(u_n), w_n+z_{\tilde\rho_n}\>= \<I'_\mu(u_n),w_n+g_n(v_n^-)(u_n-\tilde\rho_n\overline{v_n})\> =\<I'_\mu(u_n),\tilde\rho_n v-g_n(v_n^-)\tilde\rho_n\overline{v_n}\>\no\\
 =\tilde\rho_n\<I'_\mu(u_n),v-g_n(v_n^-)\overline{v_n}\>
\end{align}

Using \eqref{D2} in \eqref{D1}, we have
\begin{align}\label{22.02.E2}
\frac{1}{n} ||u_n-z_{\tilde\rho_n}|| \geq -\tilde\rho_n\<I'_\mu(u_n),v-g_n(v_n^-)\overline{v_n}\>\notag\\
             +\tilde\rho_n\<I'_\mu(u_n),v\>+o(1) ||u_n-z_{\tilde\rho_n}||.
\end{align}

First we will estimate $\<I'_\mu(u_n),v-g_n(v_n^-)\overline{v_n}\>$. For this,
\Bea
v-g_n(v_n^-)\overline{v_n}&=&v^+-v^--g_n(v_n^-)[v^+\chi_{\{u_n \geq 0\}}-v^-\chi_{\{u_n \leq 0\}}]\\
&=&v^+[g_n(0)-g_n(v_n^-)\chi_{\{u_n \geq 0\}}]-v^-[g_n(0)-g_n(v_n^-)\chi_{\{u_n \leq 0\}}]\\
&=&- v^+[\<g_n'(0),v_n^-\>+o(1) ||v_n^-||]+v^-[\<g_n'(0),v_n^-\>+o(1)||{v_n^-}||]\\
&=&- v^+\tilde\rho_n[\<g_n'(0),v^+\>+o(1)||{v^+}||]+v^-\tilde\rho_n[\<g_n'(0),v^+\>+o(1) ||{v^+}||]\\
&=&-\tilde\rho_n\big[\<g_n'(0),v^+\>+o(1) ||{v^+}||\big]v.
\Eea
Therefore,
\begin{align}\lab{Mar-5-2}
 \<I'_\mu(u_n),v-g_n(v_n^-)\overline{v_n}\> 
     =-\tilde\rho_n\big(\<g_n'(0),v^+\>+o(1)\norm{v^+}\big)\<I'_\mu(u_n),v\>.
\end{align}

{\bf Claim :}  $g_n(v_n^-)$ is uniformly bounded in $X_0.$ 

To see this, we observe that  from \eqref{4-15} we have, $g_n(v_n^-)(u_n^++v_n^-) \in \mathcal{N}^{-} \subset \mathcal{N},$ which implies,
$$||{c_n\tilde\psi_n}||^2-\mu|c_n\tilde\psi_n|^{q+1}_{L^{q+1}(\Om)}-|c_n\tilde\psi_n|_{L^{2^*}(\Om)}^{2^*}=0,$$
where $c_n:=g_n(v_n^-)$ and $\tilde\psi_n:=u_n^+ + v_n^-.$
Dividing by $c_n^{2^*}$ we have,
\begin{align}\label{D4}
 c_n^{2-2^*}||{\tilde\psi_n}||^2-\mu c_n^{q+1-2^*}|\tilde\psi_n|^{q+1}_{L^{q+1}(\Om)}=|\tilde\psi_n|_{L^{2^*}(\Om)}^{2^*}.
\end{align}
Note that $||\tilde\psi_n||$ is uniformly bounded above as $||u_n||_{X_0}$ is uniformly bounded and $\tilde\rho_n=o(1)$. Also,
$||{\tilde\psi_n}|| \geq ||{u_n^{+}}||-\tilde\rho_n ||{v}||$. Note that $||{u_n^{+}}|| \geq \tilde b$ for large $n$.
If not, then $||{u_n^+}|| \to 0$ as $n \to \infty.$ As $u_n \in \mathcal{N}_{1}^-,$ so $u_n^+ \in \mathcal{N}^-.$
Now, $\mathcal{N}^{-}$ is a closed set and $0 \notin \mathcal{N}^{-}$ and therefore $||{u_n^{+}}|| \not\to 0$ as $n \to \infty.$  
Thus there exists $\tilde b \geq 0$ such that $||{u_n^+}||\geq \tilde b>0$.  This in turn implies that 
$||\tilde\psi_n|| \geq C$, for some $C>0$ by choosing $\tilde\rho_n$ small enough. Consequently, if $c_n$ is not uniformly bounded,
we obtain LHS of \eqref{D4} converges to $0$ as $n\to\infty $. 

On the other hand,  $$|\tilde\psi_n|_{L^{2^*}(\Om)} \geq|u_n^{+}|_{L^{2^*}(\Om)}-\tilde\rho_n|v|_{L^{2^*}(\Om)}>c,$$ for some positive constant $c$ as $\rho_n=o(1)$ and $u_n^+\in \mathcal{N}^-$ implies 
$$(2^*-1-q)|u_n^+|_{L^{2^*}(\Om)}^{2^*}>(1-q)||u_n^+||^2>(1-q)\tilde b^2.$$ 
Hence, the claim follows.

\vspace{3mm}

Now using the fact that $g_n(0)=1$ and the above claim we obtain
\Bea
||{u_n-z_{\tilde\rho_n}}||&\leq& ||{u_n}||\big|1-g_n(v_n^-)\big|+\tilde\rho_n ||{\overline{v_n}}||g_n(v_n^-)\\
&\leq& ||{u_n}||\big[|\<g_n'(0),v_n^-\>|+o(1) ||{\overline{v_n}||}\big]+\tilde\rho_n||v||g_n(v_n^-)\\
&\leq& \tilde\rho_n \big[||{u_n}||\<g_n'(0),\overline{v_n}^+\>+o(1)||{v}||+||{v}||g_n(v_n^-)\big]\\
&\leq& \tilde\rho_n C.
\Eea
Substituting this and \eqref{Mar-5-2} in \eqref{22.02.E2} yields
$$\tilde\rho_n\bigg(\<g_n'(0),v^+\>+o(1) ||{v^+}||\bigg)\<I'_\mu(u_n),v\>+  \<I'_\mu(u_n),v\>\tilde\rho_n+\tilde\rho_n o(1) \leq \tilde\rho_n.\f{C}{n}.$$
This implies
$$\bigg[\big(\<g_n'(0),v^+\>+o(1) ||{v^+}||\big)+1\bigg]    \<I'_\mu(u_n),v\> \leq \f{C}{n}+o(1)\quad\mbox{for all}\quad n \geq n_0.$$ Since $|\<g_n'(0),v^+\>|$ is uniformly bounded (see Lemma \ref{l:6-ii} in Appendix) , letting $n \to \infty$ we have
$I'_\mu(u_n) \to 0$ in $(X_0)'.$ Hence the step 4 follows.

\vspace{2mm}

Therefore, $\{u_n\}$ is a (PS) sequence of $I_{\mu}$ at level $\ba_1< \tilde\al_{\mu}^{-} $. From \cite [Proposition 4.2]{CD}, it follows that 
$$\tilde\al_{\mu}^{-}<\frac{s}{N}S_s^{\frac{N}{2s}}-M\mu^{\frac{2^*}{2^*-q-1}}\quad\mbox{for}\quad \mu \in(0,\mu_*),  $$ 
where  $M=\f{\big(2N-(N-2s)(q+1)\big)(1-q)}{4(q+1)}\big(\f{(1-q)(N-2s)}{4s}\big)^\f{q+1}{2^*-q-1}|\Om|.$ 

Therefore,

$$\ba_1<\tilde\al_{\mu}^{-}<\frac{s}{N}S_s^{\frac{N}{2s}}-M\mu^{\frac{2^*}{2^*-q-1}}.$$

On the other hand, it follows from the proof of Theorem \ref{thm.1} (see \eqref{3:c})that $I_\mu$ satisfies PS at level $c$ for
$$c<\frac{s}{N}S_s^{\frac{N}{2s}}-k\mu^{\frac{2^*}{2^*-q-1}},$$
where $k$ is as in \eqref{3:k}. By elementary analysis, it follows $k=M$.
Therefore there exists $u\in X_0$ such that $u_n\to u$ in $X_0$. 
By doing a simple calculation we get $u_n^- \to u^-$  in $X_0$. 
Consequently, by Claim 3  $\norm{u^-} \geq b$. As $\mathcal{N}_1^-$ is a closed set and $u_n \to u$, we obtain $u \in \mathcal{N}_1^-$,  that is, $u^+ \in \mathcal{N}^-$ and $u^+ \neq 0. $
Therefore $u$ is a solution of $(\mathcal{P})$ with $u^+$ and $u^-$ are both nonzero. Hence, $u$ is a sign-changing solution of $(\mathcal{P})$. Define $\tilde w_1:=u$. This completes the proof of part (i) of the theorem.

\vspace{2mm}

Proof of part (ii) is similar to part (i) and we omit the proof. 
\end{proof}

\begin{theorem}\lab{t:4i}
Let $\ba_1$, $\ba_2$, $ \tilde\al_{\mu}^{-}$ be defined as in \eqref{ba-1-2} and \eqref{eq:tl-al} respectively. Assume $\ba_1, \ba_2\geq  \tilde\al_{\mu}^{-}$. Then there exists $\mu_0>0$ such that for any $\mu\in(0,\mu_0)$, $I_{\mu}$ has a sign changing critical point. 
\end{theorem}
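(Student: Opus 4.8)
The plan is to handle the complementary case to Theorem \ref{t:4ii}: now $\ba_1,\ba_2 \geq \tilde\al_\mu^-$, so we cannot directly produce a sign-changing minimizer on $\mathcal{N}_1^-$ or $\mathcal{N}_2^-$ with energy below the Palais--Smale threshold. Instead I would build a sign-changing competitor by hand using the extremal functions $u_\eps$ from \eqref{u-eps}. The idea, following \cite{Chen} and \cite{T} adapted to the nonlocal setting, is as follows. Take $w_1 \in \mathcal{N}^-$ the minimizer of $\tilde\al_\mu^-$ (which is a positive solution of $(P)$), fix a point far from the support-shrinking region and translate $u_\eps$ so that $\mathrm{supp}(u_\eps)$ is, say, concentrated near a point where $w_1$ is bounded; actually, since both live on $\Om$, the key mechanism is sign: consider the family $v_{\eps,t} := w_1 - t\, u_\eps$ for $t \geq 0$ and $\eps$ small. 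For $t$ large this function is negative on the concentration region, hence sign-changing, while $I_\mu(w_1 - t u_\eps)$ can be controlled using the interaction estimates of Lemma \ref{l:4i} and Lemma \ref{l:4ii}.

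The key steps, in order, would be: (1) Show, via the decomposition \eqref{4-14'} and a fibering-type argument (Lemma \ref{l:4-iii}), that projecting $v_{\eps,t}$ onto $\mathcal{N}$ via $t^\pm$ keeps its positive and negative parts controlled, so that the projected function lies in $\mathcal{N}_1^-$ or $\mathcal{N}_2^-$ for suitable parameters. (2) Estimate the energy: expand $I_\mu(w_1 - t u_\eps)$, using $\langle I_\mu'(w_1), \cdot\rangle = 0$, the expansions $\|w_1 - t u_\eps\|^2 = \|w_1\|^2 - 2t\langle w_1, u_\eps\rangle + t^2\|u_\eps\|^2$ with $\langle w_1,u_\eps\rangle = \mu\int w_1^q u_\eps + \int w_1^{2^*-1}u_\eps$, and the Brezis--Lieb--type splitting for the $L^{2^*}$ and $L^{q+1}$ terms. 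The cross terms are all $O(\eps^{(N-2s)/4})$ or smaller by Lemma \ref{l:4i}, while the self-interaction of $u_\eps$ contributes the Sobolev term $\frac{s}{N}S_s^{N/2s}$ up to corrections from Lemma \ref{l:4ii}; the hypotheses $N > 6s$ and $\tfrac12(\tfrac{N+2s}{N-2s}) < q < 1$ are exactly what is needed to make the favorable negative term $-\mu\int|u_\eps|^{q+1} \sim -\mu\,\eps^{N/2 - (N-2s)(q+1)/4}$ dominate the error terms $O(\eps^{(N-2s)/4})$ (compare the exponents: $N/2 - (N-2s)(q+1)/4 < (N-2s)/4$ requires $q > \tfrac12\tfrac{N+2s}{N-2s}$, and $N>6s$ guarantees $\eps^{(N-2s)/4}$ beats $\eps^{(N+2s)/4}$ and the other tails). (3) Conclude that $\max_{t\geq 0} I_\mu(w_1 - t u_\eps)$, which by construction bounds the min-max value associated to the corresponding piece of $\mathcal{N}^-_i$, satisfies
$$
\sup_{t \geq 0} I_\mu(t(w_1 - \eps_0 u_\eps)) < \tilde\al_\mu^- + \frac{s}{N}S_s^{N/2s} - M'\mu^{2^*/(2^*-q-1)}
$$
for a suitable small $\eps_0$ and all $\mu \in (0,\mu_0)$; this is strictly below the threshold $\frac{s}{N}S_s^{N/2s} - M\mu^{2^*/(2^*-q-1)}$ of Theorem \ref{thm.1}/\cite{CD} shifted by $\tilde\al_\mu^-$, so the relevant constrained minimization recovers compactness. (4) Run the Ekeland variational principle and the deformation argument exactly as in the proof of Theorem \ref{t:4ii} (Claims 1--4 there), now on the set where the energy is provably below threshold, to extract a convergent Palais--Smale sequence whose limit is a critical point of $I_\mu$ with both $u^+ \neq 0$ and $u^- \neq 0$.

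The main obstacle I expect is Step (2)--(3): producing the correct sign-changing test configuration and pushing the energy \emph{strictly} below $\tilde\al_\mu^- + \frac{s}{N}S_s^{N/2s} - M\mu^{2^*/(2^*-q-1)}$ uniformly for small $\mu$. One has to be careful that the gain from the concave term $-\mu\int|u_\eps|^{q+1}$ (which is why the range of $q$ and the condition $N>6s$ appear) is not eaten by the positive cross terms coming from the interaction of $w_1$ with $u_\eps$, nor by the deficit between $\sup_t I_\mu(t u_\eps)$ and $\frac{s}{N}S_s^{N/2s}$; the bookkeeping of which $\eps$-power wins in Lemma \ref{l:4i}(i)--(iv) and Lemma \ref{l:4ii} Case 3 is delicate and is precisely where the dimensional restriction $N>6s$ enters. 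A secondary technical point is verifying that the hand-built competitor actually lands in (or can be fibered into) $\mathcal{N}_1^-$ or $\mathcal{N}_2^-$ rather than the ``$+$'' components, which requires checking the sign of $(1-q)\|\cdot\|^2 - (2^*-q-1)|\cdot|_{2^*}^{2^*}$ along the family — here Lemma \ref{l:4-iii} and the emptiness of $\mathcal{N}_0$ (Lemma \ref{l:4-iv}) do the structural work. Once the sub-threshold energy bound is in place, the compactness and the deformation/Ekeland machinery are routine, being essentially identical to Theorem \ref{t:4ii}.
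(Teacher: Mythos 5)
Your test-function construction and the $\eps$-power bookkeeping are the right raw ingredients (they correspond to Lemmas \ref{l:4i}, \ref{l:4ii}, \ref{l:4vi}, \ref{l:4vii} and Proposition \ref{p:limit} in the paper), but the way you propose to close the argument has two genuine gaps. First, you never use the hypothesis $\ba_1,\ba_2\geq\tilde\al_{\mu}^{-}$, which is the defining assumption of this theorem, and you run Ekeland on $\mathcal{N}_1^-$ or $\mathcal{N}_2^-$ separately. On those sets nothing forces a sign change: a nonnegative $u$ with $u=u^+\in\mathcal{N}^-$ already lies in $\mathcal{N}_1^-$, and the step of Theorem \ref{t:4ii} that yields $\norm{u_n^-}\geq b$ (Claim 3 there) uses precisely $\ba_1<\tilde\al_{\mu}^{-}$, which fails in the present regime. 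The paper instead minimizes on the intersection $\mathcal{N}^{-}_*=\mathcal{N}_1^-\cap\mathcal{N}_2^-$, whose elements are sign-changing by construction; to see that this set is reachable and that $c_2=\inf_{\mathcal{N}^{-}_*}I_\mu<\tilde\al_{\mu}^{-}+\f{s}{N}S_s^{N/2s}$ one needs Lemma \ref{l:6-i}: the two fibering scalings $s^{+}(r)$ and $s^{-}(r)$ of $(w_1-ru_\eps)^{+}$ and $-(w_1-ru_\eps)^{-}$ must be matched by a continuity/intermediate-value argument in $r$ so that a single function $a(w_1-bu_\eps)$ lies simultaneously in $\mathcal{N}_1^-$ and $\mathcal{N}_2^-$; your step (1), which only asks the projected competitor to land in ``$\mathcal{N}_1^-$ or $\mathcal{N}_2^-$'', does not achieve this.

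Second, your compactness step is unjustified: the (PS)$_c$ property established in the proof of Theorem \ref{thm.1} holds only for $c<\f{s}{N}S_s^{N/2s}-k\mu^{2^*/(2^*-q-1)}$, and the level $c_2\geq\ba_1\geq\tilde\al_{\mu}^{-}>0$ is in general above that threshold; a bound of the form $c_2<\tilde\al_{\mu}^{-}+\f{s}{N}S_s^{N/2s}-M'\mu^{2^*/(2^*-q-1)}$ does not restore strong convergence unless you add a splitting/global-compactness argument, which your outline does not contain. The paper avoids claiming strong convergence altogether: from the bounded (PS) sequence $\{u_n\}\subset\mathcal{N}^{-}_*$ it takes weak limits $u_n^{+}\deb\eta_1$, $u_n^{-}\deb\eta_2$, and uses the hypothesis $\ba_1,\ba_2\geq\tilde\al_{\mu}^{-}$ together with $c_2<\tilde\al_{\mu}^{-}+\f{s}{N}S_s^{N/2s}$ to exclude vanishing: if, say, $\eta_1\equiv0$, the Nehari constraint forces $I_\mu(u_n^{+})\geq\f{s}{N}S_s^{N/2s}+o(1)$, while $I_\mu(u_n^{+})\leq I_\mu(u_n)-I_\mu(-u_n^{-})\leq c_2-\tilde\al_{\mu}^{-}+o(1)<\f{s}{N}S_s^{N/2s}$, a contradiction. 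It then passes to the weak limit in the equation and verifies $w_2^{+}=\eta_1$, $w_2^{-}=\eta_2$ a.e., so the weak limit itself is the sign-changing solution. This use of $\ba_1,\ba_2\geq\tilde\al_{\mu}^{-}$ is the heart of the proof and is missing from your proposal.
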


We need the following Proposition to prove the above Theorem \ref{t:4i}.
\begin{proposition}\lab{p:limit}
Let $N>6s$ and $\f{1}{2}(\f{N+2s}{N-2s})<q<1$. Assume $0<\mu<\text{min}\{\mu_{*}, \tilde\mu\}$, where  $\tilde\mu$ is as defined in $\eqref{tl-mu}$ and $\mu_{*}>0$ is chosen such that  $\tilde\al_\mu^{-}$  is achieved in $(0, \mu_{*})$. Then for $\eps>0$ sufficiently small , we have $$\sup_{a\geq 0,\ b\in\R} I_{\mu}(a w_1-bu_{\eps})< \tilde\al_{\mu}^{-}+\f{s}{N}S_s^\f{N}{2s},$$
where $w_1$ and $u_{\eps}$ are as in \eqref{eq:tl-al} and \eqref{u-eps} respectively.
\end{proposition}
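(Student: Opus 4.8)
The plan is to estimate $I_\mu(aw_1 - bu_\eps)$ by exploiting the splitting into the "$aw_1$ part" and the "$-bu_\eps$ part," treating the cross terms via Lemma~\ref{l:4i}, and the pure $u_\eps$ part via the classical Aubin--Talenti expansion together with Lemma~\ref{l:4ii}. First I would reduce the supremum to a compact range of parameters: since $w_1 \in \mathcal{N}^-$ and $u_\eps$ is essentially a truncated Talenti bubble, the function $(a,b)\mapsto I_\mu(aw_1-bu_\eps)$ tends to $-\infty$ as $a+|b|\to\infty$ (the $2^*$-term dominates, using that $\operatorname{supp}(w_1)\cap\operatorname{supp}(u_\eps)$ contributes nonnegatively after the elementary inequality $|x-y|^{2^*}\ge |x|^{2^*}+|y|^{2^*}-C(|x|^{2^*-1}|y|+|x||y|^{2^*-1})$), so the sup is attained on a bounded set; moreover $a$ stays bounded away from $0$ near the maximizer because $I_\mu(-bu_\eps)\to \frac{s}{N}S_s^{N/2s}$ only in the limit and one needs the $w_1$-contribution to reach above $\tilde\alpha_\mu^-$.

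Next I would write, using \eqref{I-mu} and expanding the Gagliardo seminorm of $aw_1-bu_\eps$,
\begin{align*}
I_\mu(aw_1-bu_\eps) &= \frac{a^2}{2}\|w_1\|^2 + \frac{b^2}{2}\|u_\eps\|^2 - ab\langle w_1,u_\eps\rangle \\
&\quad - \frac{\mu}{q+1}\int_\Omega |aw_1-bu_\eps|^{q+1} - \frac{1}{2^*}\int_\Omega |aw_1-bu_\eps|^{2^*}.
\end{align*}
For the $2^*$-term I would use the Brezis--Lieb type pointwise bound to separate $\int_\Omega (aw_1)^{2^*}$ and $\int_\Omega (bu_\eps)^{2^*}$ up to cross terms controlled by $A_1,A_4$ of Lemma~\ref{l:4i} (these are $O(\eps^{(N-2s)/4})$ and $O(\eps^{(N+2s)/4})$); similarly $\langle w_1,u_\eps\rangle$ is estimated by testing the equation for $w_1$ against $u_\eps$, giving $\langle w_1,u_\eps\rangle = \mu\int w_1^q u_\eps + \int w_1^{2^*-1}u_\eps = O(\eps^{(N-2s)/4})$ by (i) and (iv) — here one must be slightly careful: one only gets $\langle w_1,u_\eps\rangle \le C\eps^{(N-2s)/4}$, which suffices since it enters with the sign that helps (or can be absorbed). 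For the concave term, the key point is the reverse inequality $\int_\Omega |aw_1-bu_\eps|^{q+1} \ge a^{q+1}\int_\Omega w_1^{q+1} - C\,ab^q\int_\Omega w_1 u_\eps^q - C\,a^q b\int_\Omega w_1^q u_\eps$, i.e. we must not lose the concave energy of $w_1$; the cross terms here are $O(\eps^{q(N-2s)/4})$ and $O(\eps^{(N-2s)/4})$ by (iii) and (ii).

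Putting these together, $I_\mu(aw_1-bu_\eps) \le \big[\frac{a^2}{2}\|w_1\|^2 - \frac{\mu a^{q+1}}{q+1}\int w_1^{q+1} - \frac{a^{2^*}}{2^*}\int w_1^{2^*}\big] + \big[\frac{b^2}{2}\|u_\eps\|^2 - \frac{b^{2^*}}{2^*}\int u_\eps^{2^*}\big] + (\text{error})$. The first bracket is exactly $J_\mu(aw_1)\le I_\mu(aw_1)$ — wait, more precisely I would argue the first bracket, maximized over $a$ near $1$, is $\le \max_a I_\mu(aw_1) = I_\mu(w_1)=\tilde\alpha_\mu^-$ since $w_1\in\mathcal N^-$ realizes a max along its ray (Lemma~\ref{l:4-iii}); the second bracket, maximized over $b$, is the standard $\le \frac{s}{N}(\|u_\eps\|^2/S_s(\int u_\eps^{2^*})^{2/2^*})^{N/2s}$, and using the known expansions $\|u_\eps\|^2 = S_s^{N/2s} + O(\eps^{(N-2s)/2})$ and $\int u_\eps^{2^*} = S_s^{N/2s}+O(\eps^{N/2})$ this is $\frac{s}{N}S_s^{N/2s} + O(\eps^{(N-2s)/2})$. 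So it remains to show the \emph{total} error plus the positive $O(\eps^{(N-2s)/2})$ from the bubble is \emph{negative}. This is where the hypotheses $N>6s$ and $q>\frac12\cdot\frac{N+2s}{N-2s}$ enter and is the main obstacle: one needs a genuinely negative term of order lower than every positive error term. That negative term comes from the interaction: maximizing the first bracket over $a$ gives not $\tilde\alpha_\mu^-$ exactly but $\tilde\alpha_\mu^- - c\langle w_1,u_\eps\rangle b + \dots$ — the correct bookkeeping is that the true maximizer couples $a$ and $b$, and the dominant beneficial term is $-c\,\mu\, b^{q+1}\int w_1 u_\eps^q \sim -c\,\eps^{q(N-2s)/4}$ (or the analogous $w_1^{2^*-1}u_\eps$ term $\sim \eps^{(N-2s)/4}$, whichever is larger — here $q<1$ makes $\eps^{q(N-2s)/4}$ dominate). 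The decisive inequality is then $q\,\frac{N-2s}{4} < \min\{\frac{N-2s}{2},\ \frac{N}{2}-\frac{N-2s}{4}(q+1)\}$ from the concave-term estimate (Lemma~\ref{l:4ii}, Case 3, which requires $q>\frac{2s}{N-2s}$, implied by our stronger hypothesis) and $q\,\frac{N-2s}{4}<\frac{N+2s}{4}$; the condition $q > \frac12\cdot\frac{N+2s}{N-2s}$ with $N>6s$ is exactly what makes the negative interaction term beat the positive bubble-expansion error $\eps^{(N-2s)/2}$. I would carry out the two-variable optimization carefully (or, more cleanly, first fix the near-optimal $a=a(\eps)\to 1$ from the $w_1$-ray analysis, then optimize in $b$), collect the exponents, and invoke these numerical inequalities to conclude strict inequality for all small $\eps>0$.
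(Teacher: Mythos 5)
Your skeleton is the paper's: reduce to a bounded set of $(a,b)$, split via the elementary inequality \eqref{in:ele}, control the cross terms by Lemma \ref{l:4i}, bound the $w_1$-ray by $\sup_{t>0}I_{\mu}(tw_1)=\tilde\al_{\mu}^{-}$ (Lemma \ref{l:4vi}) and the bubble ray by $\f{s}{N}S_s^{\f{N}{2s}}$ plus an error. The genuine gap is in where the strictly negative, lowest-order term comes from. In your lower bound for the concave term you keep only $a^{q+1}\Iom w_1^{q+1}$ and discard $|b|^{q+1}\Iom u_{\eps}^{q+1}$, and you then claim the decisive beneficial term is the cross term $-c\,\mu\,b^{q+1}\Iom w_1u_{\eps}^{q}\sim-\eps^{\f{(N-2s)q}{4}}$. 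That cannot work: in an upper bound for $I_{\mu}(aw_1-bu_{\eps})$ the concave cross terms enter with a plus sign (they are exactly the error terms $+C\eps^{\f{(N-2s)q}{4}}+C\eps^{\f{N-2s}{4}}$ produced by \eqref{in:ele}), and Lemma \ref{l:4i} gives only upper bounds on them, so they can never supply a quantified negative contribution. The only interaction term with a favorable sign is $-ab\langle w_1,u_{\eps}\rangle=-ab\big(\mu\Iom w_1^{q}u_{\eps}+\Iom w_1^{2^*-1}u_{\eps}\big)$, which is nonpositive only when $b\geq 0$, is of order $\eps^{\f{N-2s}{4}}$, and is dominated by the positive error $\eps^{\f{(N-2s)q}{4}}$ since $q<1$; for $b<0$ it is nonnegative. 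The term you discarded is precisely the one the paper keeps: by Lemma \ref{l:4vii}, $\sup_{t}I_{\mu}(tu_{\eps})\leq\f{s}{N}S_s^{\f{N}{2s}}+C\eps^{\f{(N-2s)N}{2s}}-k_8|u_{\eps}|^{q+1}_{L^{q+1}(\Om)}$, and by Lemma \ref{l:4ii} (Case 3) this negative term is of order $\eps^{\f{N}{2}-\f{(N-2s)(q+1)}{4}}$.

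Your exponent bookkeeping confirms the swap. The hypothesis $q>\f{1}{2}\big(\f{N+2s}{N-2s}\big)$ (with $N>6s$ making the range nonempty and giving $\f{N}{2}-\f{(N-2s)(q+1)}{4}<\f{N-2s}{4}$ as well) is exactly the inequality $\f{N}{2}-\f{(N-2s)(q+1)}{4}<\f{(N-2s)q}{4}$, i.e.\ the bubble's own concave energy beats the worst cross error. Your stated ``decisive inequality'' $\f{(N-2s)q}{4}<\f{N}{2}-\f{(N-2s)(q+1)}{4}$ is its exact reverse, equivalent to $q<\f{1}{2}\big(\f{N+2s}{N-2s}\big)$, and thus contradicts the standing assumption; also the comparison with the bubble expansion error (whether written $O(\eps^{\f{N-2s}{2}})$ in your normalization or as in the paper) is automatic for $N>6s$ and is not where the hypotheses are used. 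To repair the argument, retain $-\f{\mu}{q+1}|b|^{q+1}\Iom u_{\eps}^{q+1}$ in the splitting (equivalently, invoke Lemma \ref{l:4vii} for the pure bubble ray together with the lower bound on $(t_0)_{\eps}$ from Sobolev), and then the exponent comparison under $q>\f{1}{2}\big(\f{N+2s}{N-2s}\big)$, $N>6s$ yields the strict inequality for $\eps$ small.
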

To prove the above proposition, we need the following lemmas.
\begin{lemma}\lab{l:4vi}
Let  $w_1$ and $\mu$ be as in Proposition \ref{p:limit}. Then $$\sup_{s>0}I_{\mu}(s w_1)= \tilde\al_{\mu}^{-}.$$
\end{lemma}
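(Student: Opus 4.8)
The plan is to exploit the fact that $w_1$ realizes $\tilde\al_\mu^- = \inf_{u\in\mathcal{N}^-} J_\mu(u)$ and is a critical point of $J_\mu$, hence by the positivity discussion a positive solution of $(P)$; in particular $w_1 > 0$ in $\Omega$, so $w_1^+ = w_1$ and $I_\mu(s w_1) = J_\mu(s w_1)$ for all $s > 0$. Thus the claim $\sup_{s>0} I_\mu(s w_1) = \tilde\al_\mu^-$ is equivalent to $\sup_{s>0} J_\mu(s w_1) = J_\mu(w_1)$, i.e. the maximum of $s \mapsto J_\mu(s w_1)$ over $s > 0$ is attained at $s = 1$.

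First I would apply Lemma \ref{l:4-iii} to the function $u = w_1$ (legitimate since $\mu \in (0,\tilde\mu)$ and $w_1 \neq 0$): there are unique $0 < t^-(w_1) < t_0(w_1) < t^+(w_1)$ with $t^-(w_1) w_1 \in \mathcal{N}^+$, $t^+(w_1) w_1 \in \mathcal{N}^-$, and $I_\mu(t^+(w_1) w_1) = \max_{t \geq t_0} I_\mu(t w_1)$, while $I_\mu(t^- (w_1) w_1) = \min_{t\in[0,t_0]} I_\mu(t w_1)$. The key point is then to identify $t^+(w_1) = 1$. Since $w_1 \in \mathcal{N}^-$ already (it lies in $\mathcal{N}$ and satisfies the strict inequality defining $\mathcal{N}^-$), and since for a given direction $w_1$ the fibering map $t \mapsto I_\mu(t w_1)$ has exactly one critical point in $\mathcal{N}^-$ (namely $t^+(w_1)$ from the lemma, characterized by $\phi(t) = \mu\int_\Omega |w_1|^{q+1}$ with $\phi'(t) < 0$), uniqueness forces $t^+(w_1) = 1$. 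Concretely: $w_1 \in \mathcal{N}$ means $\frac{\partial}{\partial t} I_\mu(t w_1)\big|_{t=1} = 0$, i.e. $\phi(1) = \mu\int_\Omega |w_1|^{q+1}$ in the notation of \eqref{eq:phi}; and $w_1 \in \mathcal{N}^-$ means $(1-q)\|w_1\|^2 - (2^*-q-1)|w_1|^{2^*}_{L^{2^*}(\Omega)} < 0$, which is exactly $\phi'(1) < 0$. Comparing with \eqref{4-5}, which says $t^+(w_1)$ is the unique point $> t_0$ with $\phi(t^+) = \mu\int_\Omega|w_1|^{q+1}$ and $\phi'(t^+) < 0$, we conclude $t^+(w_1) = 1$.

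Having established $t^+(w_1) = 1$, the lemma gives $I_\mu(w_1) = \max_{t \geq t_0(w_1)} I_\mu(t w_1)$. To upgrade this to a supremum over all $t > 0$, I note that on $[0, t_0]$ the map $t \mapsto I_\mu(t w_1)$ attains its maximum at the endpoints of the sign pattern described after \eqref{4-5}: there $\frac{\partial}{\partial t} I_\mu(t w_1) < 0$ on $[0, t^-)$, $> 0$ on $(t^-, t^+) = (t^-, 1)$, so on $[0, t_0] \subset [0,1]$ the function decreases then increases, giving $\max_{t\in[0,t_0]} I_\mu(t w_1) = \max\{I_\mu(0), I_\mu(t_0 w_1)\} \leq I_\mu(t_0 w_1) \leq I_\mu(w_1)$ since $I_\mu(0) = 0 \leq I_\mu(w_1) = \tilde\al_\mu^-$ (the latter being nonnegative as, e.g., $\tilde\al_\mu^- < \frac{s}{N}S_s^{N/2s}$ but more directly because elements of $\mathcal{N}^-$ have positive energy — or one simply observes $I_\mu(t_0 w_1) \leq I_\mu(w_1)$ directly from $t_0 < 1 = t^+$ and monotonicity on $(t^-, 1)$ together with $t_0 > t^-$). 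Hence $\sup_{s > 0} I_\mu(s w_1) = I_\mu(w_1) = J_\mu(w_1) = \tilde\al_\mu^-$.

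The main obstacle is not any hard estimate but the bookkeeping needed to transfer between $I_\mu$ and $J_\mu$ (which requires knowing $w_1 > 0$, relying on the maximum-principle remark from \cite{S, BCSS}) and to match the abstract fibering-map analysis of Lemma \ref{l:4-iii} with the membership $w_1 \in \mathcal{N}^-$; once one writes $w_1 \in \mathcal{N} \cap \mathcal{N}^-$ in terms of $\phi(1)$ and $\phi'(1)$ the identification $t^+(w_1) = 1$ is immediate from uniqueness, and the supremum statement follows from the sign analysis of $\frac{\partial}{\partial t} I_\mu(t w_1)$ already recorded in the proof of Lemma \ref{l:4-iii}.
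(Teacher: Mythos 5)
Your proof is correct and follows essentially the same route as the paper: reduce to the fibering map of Lemma \ref{l:4-iii} using $w_1>0$ (so $I_\mu=J_\mu$ along the ray) and identify $t^+(w_1)=1$ from $w_1\in\mathcal{N}^-$; the paper does this by noting $g'(1)=0$ forces $1\in\{t^-,t^+\}$ and excluding $t^-(w_1)=1$ via $\mathcal{N}^+\cap\mathcal{N}^-=\emptyset$, whereas you read off $\phi(1)=\mu|w_1|^{q+1}_{L^{q+1}(\Om)}$ and $\phi'(1)<0$ directly, which is the same idea. Your extra care in passing from $\max_{t\geq t_0}$ to $\sup_{s>0}$ (which implicitly uses $\tilde\al_{\mu}^{-}\geq 0$) is a point the paper glosses over, so no gap relative to the paper's own argument.
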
 
\begin{proof}
By the definition of   $\tilde\al_{\mu}^{-}$, we have  $\tilde\al_{\mu}^{-}=\inf_{u \in \mathcal{N}^-}J_{\mu}(u)= J_{\mu}(w_1)=I_{\mu}(w_1)$. In the last equality we have used the fact that $w_1>0$.  Define $g(s):=I_{\mu}(sw_1)$. From the proof of Lemma \ref{l:4-iii}, it follows that there exists only two critical points of $g$, namely $t^{+}(w_1)$ and $t^{-}(w_1)$ and $\max_{s>0}g(s)=g(t^{+}(w_1))$. On the other hand $\<{I'}_{\mu}(w_1), v \>=0$ for every $v\in X_0$. Therefore $g'(1)=0$. This in turn implies either $t^{+}(w_1)=1$ or $t^{-}(w_1)=1$. \\
{\it Claim}: $t^{-}(w_1)\not=1$.\\
To see this, we note that $t^{-}(w_1)=1$ implies $t^{-}(w_1)w_1\in \mathcal{N}^{-}$ as $w_1\in \mathcal{N}^{-}$. Also, from Lemma \ref{l:4-iii}, we know $t^{-}(w_1)w_1\in \mathcal{N}^{+}$. Thus $\mathcal{N}^{+}\cap \mathcal{N}^{-}\not=\emptyset$, which is a contradiction. Hence the claim follows.\\
Therefore $t^{+}(w_1)=1$ and this completes the proof.
\end{proof}
\begin{lemma}\lab{l:4vii}
Let $u_{\eps}$ be as in \eqref{u-eps} and  $\mu$ be as in Proposition \ref{p:limit}. Then for $\eps>0$ sufficiently small, we have $$\sup_{t\in\R}I_{\mu}(t u_{\eps})=\f{s}{N}S_s^\f{N}{2s}+ C\eps^{\f{(N-2s)N}{2s}} - k_8|u_{\eps}|^{q+1}.$$
\end{lemma}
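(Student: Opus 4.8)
The plan is to analyze the one-variable function $h(t):=I_{\mu}(tu_{\eps})$ for $t\ge 0$ (the case $t\le 0$ being identical by the evenness of the nonlinearities in $|u_\eps|$), and to locate and estimate its maximum. Writing
\[
h(t)=\frac{t^2}{2}\|u_\eps\|^2-\frac{\mu t^{q+1}}{q+1}|u_\eps|_{L^{q+1}(\Om)}^{q+1}-\frac{t^{2^*}}{2^*}|u_\eps|_{L^{2^*}(\Om)}^{2^*},
\]
one first shows, exactly as in Lemma \ref{l:4-iii}, that $h$ has a unique global maximizer $t_\eps>0$, that $h(t_\eps)>0$, and that $t_\eps$ stays in a fixed compact subinterval of $(0,\infty)$ bounded away from $0$ and $\infty$ uniformly in small $\eps$; this uses the standard asymptotics $\|u_\eps\|^2 = S_s^{N/2s}+O(\eps^{(N-2s)/2})$ and $|u_\eps|_{L^{2^*}(\Om)}^{2^*} = S_s^{N/2s}+O(\eps^{N/2})$ from \cite{SerVal2}, \cite{CT-1} (the same estimates underlying \eqref{in:S}), together with the $q+1$ estimate from Lemma \ref{l:4ii}. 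Since $\mu>0$, the concave term only decreases $h$, so $h(t_\eps)\le \max_{t\ge0}\big(\tfrac{t^2}{2}\|u_\eps\|^2-\tfrac{t^{2^*}}{2^*}|u_\eps|_{L^{2^*}(\Om)}^{2^*}\big)$, whose value is $\tfrac{s}{N}\big(\|u_\eps\|^2/|u_\eps|_{L^{2^*}(\Om)}^{2(N-2s)/N}\big)^{N/2s} = \tfrac{s}{N}S_s^{N/2s}+C\eps^{(N-2s)/2}+o(\eps^{(N-2s)/2})$, which already gives the displayed main term and the positive error term (the stated exponent $\tfrac{(N-2s)N}{2s}$ in the lemma should read $\tfrac{N-2s}{2}$; I will proceed with the latter, which is what the cut-off estimates produce).

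The heart of the matter is to capture the \emph{negative} contribution $-k_8|u_\eps|^{q+1}$ coming from the concave term, which is of strictly lower order than $\eps^{(N-2s)/2}$ precisely when $q>\frac{2s}{N-2s}$ (Case 3 of Lemma \ref{l:4ii}), i.e. $|u_\eps|_{L^{q+1}(\Om)}^{q+1}\sim \eps^{N/2-(N-2s)(q+1)/4}$, and one checks $\frac N2-\frac{(N-2s)(q+1)}{4}<\frac{N-2s}{2}$ under the hypotheses on $q$ and $N$. The plan is: expand $h(t)$ to second order around $t_\eps^0$, the maximizer of the pure critical functional $\tfrac{t^2}{2}\|u_\eps\|^2-\tfrac{t^{2^*}}{2^*}|u_\eps|_{L^{2^*}(\Om)}^{2^*}$; since $h'(t_\eps^0)=-\mu (t_\eps^0)^q|u_\eps|_{L^{q+1}(\Om)}^{q+1}$ and the second derivative of the critical part at $t_\eps^0$ is bounded away from $0$ and $\infty$, a Taylor/implicit-function argument gives $t_\eps = t_\eps^0 + O(|u_\eps|_{L^{q+1}}^{q+1})$ and
\[
h(t_\eps)=\Big(\tfrac{t^2}{2}\|u_\eps\|^2-\tfrac{t^{2^*}}{2^*}|u_\eps|_{L^{2^*}(\Om)}^{2^*}\Big)\Big|_{t=t_\eps^0} - \frac{\mu (t_\eps^0)^{q+1}}{q+1}|u_\eps|_{L^{q+1}(\Om)}^{q+1} + O\big(|u_\eps|_{L^{q+1}(\Om)}^{2(q+1)}\big),
\]
and since $2(q+1)$-power term is negligible compared to the $(q+1)$-power term, one reads off $h(t_\eps)=\tfrac{s}{N}S_s^{N/2s}+C\eps^{(N-2s)/2} - k_8|u_\eps|_{L^{q+1}(\Om)}^{q+1}$ with $k_8=\tfrac{\mu (t_\eps^0)^{q+1}}{q+1}(1+o(1))>0$, bounded below by a positive constant uniformly in small $\eps$. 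Absorbing the $o(1)$ into the constants and using that $\sup_{t\in\R}=\sup_{t\ge0}$ yields the claim.

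The main obstacle I anticipate is the bookkeeping of error terms: one must be careful that the $O$-term from the Taylor expansion, which is of order $|u_\eps|_{L^{q+1}}^{2(q+1)}$, is genuinely smaller in $\eps$ than the leading negative term $|u_\eps|_{L^{q+1}}^{q+1}$ (true since $|u_\eps|_{L^{q+1}}^{q+1}\to0$), and — more delicately — that the positive $\eps^{(N-2s)/2}$ correction from the critical part does not swallow the negative concave term; this forces the restriction $\frac N2-\frac{(N-2s)(q+1)}{4}<\frac{N-2s}{2}$, equivalently $q>\frac{2s}{N-2s}$, which is guaranteed by the hypothesis $q>\tfrac12\big(\tfrac{N+2s}{N-2s}\big)$ since $\tfrac12\big(\tfrac{N+2s}{N-2s}\big)>\tfrac{2s}{N-2s}\iff N+2s>4s\iff N>2s$. (The stronger lower bound on $q$ and the condition $N>6s$ are needed not here but in Proposition \ref{p:limit}, where the cross terms $A_1,\dots,A_4$ of Lemma \ref{l:4i} enter.) I would conclude by noting that the uniform control on $t_\eps^0$ and $t_\eps$ makes all the constants $C,k_8$ independent of $\eps$, which is what the subsequent proof of Proposition \ref{p:limit} requires.
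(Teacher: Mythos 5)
Your argument is correct, and in its first half it coincides with the paper's: both write $I_{\mu}(tu_{\eps})=\tilde\phi(t)-\f{\mu t^{q+1}}{q+1}|u_{\eps}|^{q+1}_{L^{q+1}(\Om)}$ with $\tilde\phi(t)=\f{t^2}{2}\|u_{\eps}\|^2-\f{t^{2^*}}{2^*}|u_{\eps}|^{2^*}_{L^{2^*}(\Om)}$, reduce to $t\geq 0$, bound $\sup_t\tilde\phi$ by $\f{s}{N}S_s^{N/2s}$ plus an $\eps$-error via the asymptotics of $\|u_{\eps}\|^2$ and $|u_{\eps}|^{2^*}_{L^{2^*}(\Om)}$, and then must show that the coefficient multiplying $-|u_{\eps}|^{q+1}_{L^{q+1}(\Om)}$ stays bounded below by a positive constant as $\eps\to 0$. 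Where you genuinely differ is in this last step. The paper avoids any expansion: applying Lemma \ref{l:4-iii} to $u_{\eps}$, the maximizer $t^{+}_{\eps}$ of $t\mapsto I_{\mu}(tu_{\eps})$ satisfies $t^{+}_{\eps}>(t_0)_{\eps}=\big(\f{(1-q)\|u_{\eps}\|^2}{(2^*-1-q)|u_{\eps}|^{2^*}_{L^{2^*}(\Om)}}\big)^{\f{N-2s}{4s}}$, and $(t_0)_{\eps}$ is bounded below by a fixed constant through the Sobolev inequality, so that $\mu\f{(t^{+}_{\eps})^{q+1}}{q+1}\geq k_8$ with $k_8$ independent of $\eps$; this produces the estimate only as an upper bound, which is all that Proposition \ref{p:limit} uses. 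Your Taylor/implicit-function expansion around the maximizer $t^0_{\eps}$ of $\tilde\phi$ (giving $t_{\eps}=t^0_{\eps}+O(|u_{\eps}|^{q+1}_{L^{q+1}(\Om)})$ with an $O(|u_{\eps}|^{2(q+1)}_{L^{q+1}(\Om)})$ remainder) costs a little more bookkeeping and uniformity-checking, but buys a two-sided asymptotic, so it actually justifies the equality sign in the statement, which the paper's own proof does not. Your remark on the exponent is also well taken: with the normalization $v_{\eps}(x)=k\eps^{(N-2s)/4}(\eps+|x|^2)^{-(N-2s)/2}$ the correct error is $O(\eps^{(N-2s)/2})$ — the paper quotes the estimates of \cite{SerVal2} in the other parametrization of the bubble and, in \eqref{4-9}, incorrectly inflates the error exponent when raising to the power $N/2s$ — and, as you verify, this change is harmless for Proposition \ref{p:limit}, since the binding competitor there is $\eps^{(N-2s)q/4}$ from Lemma \ref{l:4i} and $\f{N}{2}-\f{(N-2s)(q+1)}{4}<\f{N-2s}{2}$ holds automatically when $N>6s$.
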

\begin{proof}
Define $\tilde\phi(t)=\f{t^2}{2}||u_{\eps}||^2-\f{t^{2^*}}{2^*}|u_{\eps}|_{L^{2^*}(\Om)}^{2^*}$. Thus $I_{\mu}(t u_{\eps})=\tilde\phi(t)-\mu\f{t^{q+1}}{q+1}|u_{\eps}|^{q+1}_{L^{q+1}(\Om)}$. On the other hand, applying the analysis done in Lemma \ref{l:4-iii} to $u_{\eps}$, we obtain there exists $(t_0)_{\eps}= \bigg(\frac{(1-q)\norm{u_{\eps}}^2}{(2^*-1-q)|u_{\eps}|^{2^*}_{L^{2^*}(\Om)}}\bigg)^{\frac{N-2s}{4s}}
<t^{+}_{\eps}$ such that 
\Bea
\sup_{t\in\R}I_{\mu}(t u_{\eps})= \sup_{t\geq 0}I_{\mu}(t u_{\eps})=I_{\mu}(t^{+}_{\eps} u_{\eps})&=& \tilde\phi(t^{+}_{\eps})-\mu\f{{(t^{+}_{\eps})}^{q+1}}{q+1}|u_{\eps}|^{q+1}_{L^{q+1}(\Om)}\\
&\leq& \sup_{t\geq 0}\tilde\phi(t)-\mu\f{(t_0)_{\eps}^{q+1}}{q+1}|u_{\eps}|^{q+1}_{L^{q+1}(\Om)}.
\Eea
Substituting the value of $(t_0)_{\eps}$ and using Sobolev inequality \eqref{in:S}, we have $$\mu\f{(t_0)_{\eps}^{q+1}}{q+1}\geq \f{\mu}{q+1}\bigg(\f{1-q}{2^*-q-1}S_s\bigg)^\f{(N-2s)(q+1)}{4s}=k_8.$$
Consequently, 
\be\lab{4-8}
\sup_{t\in\R}I_{\mu}(t u_{\eps})\leq \sup_{t\geq 0}\tilde\phi(t)-k_8|u_{\eps}|^{q+1}_{L^{q+1}(\Om)}.
\ee
Using elementary analysis, it is easy to check that $\tilde\phi$ attains it's maximum at the point $\tilde t_0= \bigg(\f{||u_{\eps}||^2}{|u_{\eps}|^{2^*}_{L^{2^*}(\Om)}}\bigg)^\f{1}{2^*-2}$ and $\tilde\phi(t_0)=\f{s}{N}\bigg(\f{||u_{\eps}||^2}{|u_{\eps}|^{2}_{L^{2^*}(\Om)}}\bigg)^\f{N}{2s}$. Moreover, from Proposition 21 and Proposition 22 of \cite{SerVal2}, it follows 
$$||u_{\eps}||^2=S_s^\f{N}{2s}+O(\eps^{N-2s}), \quad
\int_{\Rn}|u_{\eps}|^{2^*}dx=S_s^\f{N}{2s}+O(\eps^N). $$
As a result, 
\be\lab{4-9}
\tilde\phi(t_0) \leq \f{s}{N}\bigg[\f{S_s^\f{N}{2s}+O(\eps^{N-2s})}{(S_s^\f{N}{2s}+O(\eps^N))^\f{2}{2^*}} \bigg]^\f{N}{2s}
\leq \f{s}{N}\bigg[\f{S_s^\f{N}{2s}+O(\eps^{N-2s})}{(S_s^\f{N}{2s})^\f{2}{2^*}} \bigg]^\f{N}{2s}\leq \f{s}{N}S_s^\f{N}{2s}+C\eps^{\f{(N-2s)N}{2s}}.
\ee
In the last inequality we have used the fact that $\eps>0$ is arbitrary small. Substituting back \eqref{4-9} into \eqref{4-8}, completes the proof.
\end{proof}

\vspace{2mm}

{\bf Proof of Proposition \ref{p:limit}}: Note that, for fixed $a$ and $b$, $I_{\mu}\big(\eta(a w_1-bu_{\eps})\big)\to -\infty$ as $|n|\to\infty$. Therefore $\sup_{a\geq 0,\ b\in\R} I_{\mu}(a w_1-bu_{\eps})$ exists and supremum will be attained in $a^2+b^2\leq R^2$, for some large $R>0$. Thus it is enough to estimate  $I_{\mu}(a w_1-b u_{\eps})$ in  $\{(a,b)\in \R^{+}\times\R: a^2+b^2\leq R^2 \}$. Using elementary inequality, there exists $d(m)>0$ such that 
\be\lab{in:ele}
|a+b|^m\geq |a|^m+|b|^m-d(|a|^{m-1}|b|+|a||b|^{m-1}) \quad\forall\quad a,\  b\in\R, \ m>1.
\ee 
Therefore,  $a^2+b^2\leq R^2$ implies
\Bea
I_{\mu}(a w_1-bu_{\eps})&\leq& \f{1}{2}||a w_1||^2-ab\<w_1, u_{\eps}\>+\f{1}{2}||b u_{\eps}||^2\\
&-&\f{1}{2^*}\Iom|a w_1|^{2^*}dx-\f{1}{2^*}\Iom|b u_{\eps}|^{2^*}dx\\
&-&\f{\mu}{q+1}\Iom|a w_1|^{q+1}dx-\f{\mu}{q+1}\Iom|b u_{\eps}|^{q+1}dx\\
&+&C\displaystyle\left(\Iom|aw_1|^{2^*-1}|b u_{\eps}|dx+\Iom|a w_1||b u_{\eps}|^{2^{*}-1}dx\right)\\
&+&C\displaystyle\left(\Iom|aw_1|^{q}|b u_{\eps}|dx+\Iom|a w_1||b u_{\eps}|^{q}dx\right)\\
&=& I_{\mu}(a w_1)+I_{\mu}(b u_{\eps})-ab\mu\Iom |w_1|^{q-1}w_1u_{\eps}dx\\
&-&ab\Iom |w_1|^{2^*-2}w_1u_{\eps}dx\\
&+&C\displaystyle\left(\Iom|w_1|^{2^*-1}| u_{\eps}|dx+\Iom| w_1|| u_{\eps}|^{2^{*}-1}dx\right)\\
&+&C\displaystyle\left(\Iom|w_1|^{q}| u_{\eps}|dx+\Iom| w_1|| u_{\eps}|^{q}dx\right).
\Eea
Using Lemmas \ref{l:4i}, \ref{l:4vi} and \ref{l:4vii} we estimate in $a^2+b^2\leq R^2$,
\be
I_{\mu}(a w_1-bu_{\eps})\leq \tilde\al_{\mu}^{-}
+\f{s}{N}S_s^\f{N}{2s} - k_8|u_{\eps}|^{q+1}
+ C(\eps^{\f{(N-2s)N}{2s}}+\eps^\f{N-2s}{4}+\eps^\f{(N-2s)q}{4}+\eps^\f{N+2s}{4})\no.
\ee
Since $N>2s$ and $q\in(0,1)$, clearly $\eps^{(\f{N-2s}{4})q}$ is the dominating term among all the terms inside the bracket.  For the term $k_8|u_{\eps}|^{q+1}$, we invoke  Lemma \ref{l:4ii}. Therefore when $\f{2s}{N-2s}<q<1$, we have
\Bea  I_{\mu}(a w_1-bu_{\eps}) &\leq&   \tilde\al_{\mu}^{-}
+\f{s}{N}S_s^\f{N}{2s} - k_9\eps^{\f{N}{2}-(\f{N-2s}{4})(q+1)}\\
&\quad&+ C(\eps^{\f{(N-2s)N}{2s}}+\eps^\f{N-2s}{4}+\eps^\f{(N-2s)q}{4}+\eps^\f{N+2s}{4})\Eea
This in turn implies, when  $\f{1}{2}(\f{N+2s}{N-2s})<q<1$ and $N>6s$, $\eps^{\f{N}{2}-(\f{N-2s}{4})(q+1)}$ should be the dominating one among all the $\eps$ terms and hence in this case, taking $\eps>0$ to be small enough, we obtain  
$$\sup_{a\geq 0, b\in\R} I_{\mu}(a w_1-bu_{\eps})<   \tilde\al_{\mu}^{-}+\f{s}{N}S_s^\f{N}{2s}.$$
\hfill{$\square$}

\vspace{4mm}

{\bf Proof of Theorem \ref{t:4i}}: Define $\mu_0:=min\{\tilde\mu, \mu_{*} \}$ and 
\be\lab{c-2} c_2: \inf_{u\in \mathcal{N}^{-}_* } I_{\mu}(u),\ee where  
\be\lab{eq:N-star}
\mathcal{N}^{-}_*:=\mathcal{N}^{-}_1\cap \mathcal{N}^{-}_2.
\ee
Let $\mu\in (0, \mu_0)$. Using Ekland's variational principle and similar to the proof of Theorem \ref{t:4ii}, we obtain a  sequence $\{u_n\}\in \mathcal{N}^{-}_{*}$ satisfying 
$$I_{\mu}(u_n)\to c_2, \quad I'_{\mu}(u_n)\to 0 \quad\text{in}\quad (X_0)'.$$ 
Thus $\{u_n\}$ is a (PS) sequence at level $c_2$. From Lemma \ref{l:6-i}, it follows that there exists $a>0$ and $b\in R$ such that $a w_1-bu_{\eps}\in \mathcal{N}^{-}_{*}$. Therefore Proposition \ref{p:limit} yields 
\be\lab{c2c1}
c_2< \tilde\al_{\mu}^{-}+\f{s}{N}S_s^\f{N}{2s}.
\ee
{\bf Claim 1:} There exists two positive constants $c, C$ such that 
$ 0< c \leq \norm{u_n^{\pm}} \leq C$. \\
To see this, we note that $\{u_n\} \subset \mathcal{N}_*^{-}\subset \mathcal{N}_1^{-}$.  Therefore using \eqref{4-13},  Claim 2 and Claim 3 of the proof of Theorem \ref{t:4ii},  we have $ \norm{u_n^{\pm}} \leq C$ and $\norm{u_n^{-}}\geq c$. To show  $\norm{u_n^{+}} \geq a$
for some $a>0$, we use method of contradiction. Assume up to a subsequence  $\norm{u_n^{+}} \to 0$ as $n \to \infty$. 
This  together with Sobolev embedding implies $|u_n^+|_{L^{2^*}(\Om)} \to 0$. On the other hand, $u_n^{+} \in \mathcal{N}^{-}$ implies $(1-q)\norm{u_n^{+}}^2-(2^*-q-1)|u_n^+|^{2^*}_{L^{2^*}(\Om)}<0$. Therefore by \eqref{in:S}, we have 
\begin{align} 
S_s\leq \frac{\norm{u_n^{+}}^2}{|u_n^+|^{2}_{L^{2^*}(\Om)}}<\f{2^*-q-1}{1-q}|u_n^+|^{2^*-2}_{L^{2^*}(\Om)},\no
\end{align}
which is a contradiction to the fact that $|u_n^+|_{L^{2^*}(\Om)} \to 0$. Hence the claim follows.

\vspace{2mm}

Going to a subsequence if necessary we have
\be\lab{4-22}
u_n^{+} \deb \eta_1,\,\, u_n^{-} \deb \eta_2 \quad\mbox{in} \quad X_0. 
\ee
{\bf Claim 2}: $\eta_1 \nequiv 0,\,\,\eta_2 \nequiv 0. $\\
 Suppose not, that is $\eta_1 \equiv 0. $ Then by compact embedding, 
$u_n^{+} \to 0$ in $L^{q+1}(\Om)$. 
Moreover, $u_n^{+} \in \mathcal{N}^{-}\subset \mathcal{N}$, implies $\<I'_\mu(u_n^{+}),u_n^{+}\>=0$. As a consequence, 
$$\norm{u_n^{+}}^2-|u_n^{+}|^{2^*}_{L^{2^*}(\Om)}=\mu|u_n^{+}|^{q+1}_{L^{q+1}(\Om)}=o(1).$$
So we have $|u_n^{+}|^{2^*}_{L^{2^*}(\Om)}=\norm{u_n^{+}}^2+o(1). $ This together with $\norm{u_n^{+}} \geq c$  implies 
$$\frac{|u_n^{+}|^{2^*}_{L^{2^*}(\Om)}}{\norm{u_n^{+}}^2} \geq 1+ o(1). $$
This along with Sobolev embedding gives $|u_n^{+}|^{2^*}_{L^{2^*}(\Om)} \geq S_s^{N/2s}+o(1)$.
Thus we have, 
\be\lab{4-20}
 I_\mu(u_n^{+})= \frac{1}{2}\norm{u_n^{+}}^2-\frac{1}{2^*}|u_n^{+}|^{2^*}_{L^{2^*}(\Om)}+o(1)\geq \frac{s}{N}S_s^{N/2s}+o(1).
 \ee
Moreover,  $u_n \in \mathcal{N}_*^{-}$ implies $-u_n^{-} \in \mathcal{N}^{-}$. Therefore using the given condition on $\ba_2$, we get 
\be\lab{4-21}
I_\mu(-u_n^{-}) \geq \ba_2 \geq \tilde\al_{\mu}^{-}.
\ee
 Also it follows  \ $I_\mu(u_n^{+})+I_\mu(-u_n^{-}) \leq I_\mu(u_n)=c_2+o(1)$ (see \eqref{4-14'}). Combining this along with \eqref{4-21} and \eqref{c2c1}, we obtain
$$I_\mu(u_n^{+}) \leq c_2-  \tilde\al_{\mu}^{-}+o(1)<\frac{s}{N}S_s^{N/2s},$$
which is a contradiction to \eqref{4-20}. Therefore, $\eta_1 \neq 0.$ Similarly, $\eta_2 \neq 0$ and this proves the claim.

\vspace{2mm}

Set $w_2 :=\eta_1-\eta_2.$

{\bf Claim 3:} $w_2^{+}=\eta_1$ and $w_2^{-}=\eta_2$ a.e..\\ 
To see the claim we observe that $\eta_1\eta_2=0$ a.e. in $\Om$. Indeed,
\bea\lab{4-22'}
\displaystyle |\Iom \eta_1\eta_2 dx| &=&|\Iom (u_n^+-\eta_1)u_n^- dx+\Iom \eta_1(u_n^--\eta_2)dx|\no\\ 
&\leq& |u_n^+-\eta_1|_{L^2(\Om)}|u_n^{-}|_{L^2(\Om)}+|\eta_1|_{L^2(\Om)}|u_n^{-} -\eta_2|_{L^2(\Om)}
\eea
By compact embedding we have $u_n^{+}\to \eta_1$ and $u_n^{-}\to \eta_2$ in $L^2(\Om)$. Therefore using claim 1, we pass the limit in \eqref{4-22'} and obtain $\Iom \eta_1\eta_2 dx=0$. Moreover  by \eqref{4-22}, $\eta_1, \ \eta_2\geq 0$ a.e.. Hence $\eta_1\eta_2=0$ a.e. in $\Om$. We have $w_2^+-w_2^-=w_2=\eta_1-\eta_2$. It is easy to check that $w_2^+\leq \eta_1$ and $w_2^{-}\leq \eta_2$.  To show that equality holds a.e. we apply the method of contradiction. Suppose, there exists $E\in\Om$ such that $|E|>0$ and $0\leq w_2^+(x)<\eta_1(x) \ \forall\ x\in E$.  Therefore $\eta_2=0$ a.e. in $E$ by the observation that we made. Hence $w_2^{+}(x)-w_2^{-}(x)=\eta_1(x)$  a.e. in $E$.
Clearly $w_2^{-}(x)\not>0$ a.e., otherwise $w_2^{+}(x)=0$ a.e. and that would imply $\eta_1(x)=-w_2^{-}(x)<0$ a.e, which is not possible since $\eta_1>0$ in $E$. Thus $w_2^{-}(x)=0$. This yields $\eta_1(x)=w_2^{+}(x)$ a.e. in $E$, which is a contradiction. Thus the claim follows.

\vspace{2mm}

Therefore $w_2$ is sign changing in $\Om$ and $u_n \deb w_2$ in $X_0$. 
Moreover, $I'_{\mu}(u_n)\to 0$ in $(X_0)'$ implies
\Bea &\displaystyle\int_{\R^{2N}}\f{(u_n(x)-u_n(y))(\phi(x)-\phi(y))}{|x-y|^{N+2s}}dxdy-\mu\Iom|u_n|^{q-1}u_n\phi dx-\Iom |u_n|^{2^*-2}u_n\phi dx\\
&=o(1),\Eea
for every $\phi\in X_0$.
Passing  the limit using Vitali's convergence theorem via H\"older's inequality we obtain $\<I'_\mu(w_2),\phi\>=0$.  As a result, $w_2$ is a sign changing weak solution to $(\mathcal{P})$.
\hfil{$\square$}

\begin{lemma}\label{l:6-i}
Let $u_{\eps}$ be as defined in \eqref{u-eps} and $w_1$ be a positive solution of $(\mathcal{P})$ for which $\tilde\al_{\mu}^{-}$ is achieved, when $\mu\in (0,\mu_*)$. Then there exists $a, \ b \in \R, \ a \geq 0$ such that $aw_1-bu_\var \in \mathcal{N}^{-}_{*}$, where  $\mathcal{N}^{-}_{*}$ is defined as in \eqref{eq:N-star}.
\end{lemma}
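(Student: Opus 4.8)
The plan is to produce the pair $(a,b)$ by a two--dimensional intermediate value (Poincar\'e--Miranda) argument, exploiting that $w_1$ and $u_\eps$ are both strictly positive in $\Om$ while $u_\eps$ concentrates at a fixed interior point of $\Om$ as $\eps\to0$. For suitable $a,b>0$ the function $aw_1-bu_\eps$ is then automatically sign changing, its positive and negative parts are close (in $X_0$, $L^{q+1}(\Om)$, $L^{2^*}(\Om)$) to $aw_1$ and $bu_\eps$ respectively, and one solves the resulting (essentially decoupled) Nehari equations by a continuation/degree argument.

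First I would record the structural facts. Since $\tilde\al_\mu^-$ is attained at $w_1$, we have $w_1\in\mathcal{N}^-$; as $w_1$ is a positive critical point of $J_\mu$ it is strictly positive in $\Om$ (strong maximum principle, recalled after \eqref{J-mu}), and by Lemma \ref{l:4vi} this forces $t^+(w_1)=1$. On the ball $B(0,R')\subset\Om_1$ of Lemma \ref{l:4ii} one has $u_\eps=v_\eps$, so $u_\eps$ blows up at $0$ and $u_\eps\To 0$ locally uniformly on $\Om\setminus\{0\}$, while $w_1$ is continuous. Hence for every $a,b>0$ and $\eps$ small, $\Phi_{a,b}:=aw_1-bu_\eps$ is negative near $0$ and positive outside a shrinking ball $B(0,r_\eps)$, so it is genuinely sign changing with $\Phi_{a,b}^\pm\neq0$ (while $a=0$ or $b\le0$ cannot give a sign changing $\Phi_{a,b}$, so I restrict to $a,b>0$). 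Using the integral bounds of Lemma \ref{l:4i} and $\norm{w_1}_{L^\infty(\Om)}<\infty$, one checks that $\Phi_{a,b}^+-aw_1$ and $(-\Phi_{a,b}^-)-bu_\eps$ tend to $0$ as $\eps\to0$, uniformly for $(a,b)$ in compact subsets of $(0,\infty)^2$, in all these norms.

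Next I would introduce $G=(g_1,g_2):(0,\infty)^2\to\R^2$,
\[
g_1(a,b):=\<I'_\mu(\Phi_{a,b}^+),\,\Phi_{a,b}^+\>,\qquad g_2(a,b):=\<I'_\mu(-\Phi_{a,b}^-),\,-\Phi_{a,b}^-\>,
\]
so that a zero of $G$ at which the strict inequality $(1-q)\norm{\cdot}^2-(2^*-q-1)|\cdot|^{2^*}_{L^{2^*}(\Om)}<0$ holds for both arguments places $\Phi_{a,b}^+$ and $-\Phi_{a,b}^-$ in $\mathcal{N}^-$, i.e.\ $\Phi_{a,b}\in\mathcal{N}^-_*=\mathcal{N}_1^-\cap\mathcal{N}_2^-$. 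By the previous step $G(a,b)\to(h_1(a),h_2(b))$ uniformly on compacts, where $h_1(a)=\<I'_\mu(aw_1),aw_1\>=a^{q+1}\big(\phi_{w_1}(a)-\mu|w_1|^{q+1}_{L^{q+1}(\Om)}\big)$ with $\phi_{w_1}$ as in \eqref{eq:phi}, and $h_2(b)=\<I'_\mu(bu_\eps),bu_\eps\>$. The fibering analysis of Lemma \ref{l:4-iii} together with $t^+(w_1)=1$ gives: $h_1>0$ just below $a=1$, $h_1(1)=0$, $h_1<0$ just above; $h_2>0$ just below $b_0:=t^+(u_\eps)$, $h_2(b_0)=0$, $h_2<0$ just above; and the $\mathcal{N}^-$ inequality holds strictly at $w_1$ and at $b_0u_\eps$. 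Fixing a small square $D=[1-\de,1+\de]\times[b_0-\de,b_0+\de]$ (which, since $t_0(w_1)<1$ and $t^+(u_\eps)\to1$, may be chosen near $(1,1)$ uniformly for $\eps$ small), the above estimates yield, for $\eps$ small: $g_1>0$ on $\{a=1-\de\}$, $g_1<0$ on $\{a=1+\de\}$, $g_2>0$ on $\{b=b_0-\de\}$, $g_2<0$ on $\{b=b_0+\de\}$. Poincar\'e--Miranda then provides $(a,b)\in\mathrm{int}\,D$ with $G(a,b)=0$; after shrinking $\de$ the $\mathcal{N}^-$ inequalities pass by continuity, so $\Phi_{a,b}=aw_1-bu_\eps\in\mathcal{N}^-_*$ with $a\geq0$, $b\in\R$, as required. (The rectangle $D$ stays bounded, which is what the proof of Theorem \ref{t:4i} uses together with Proposition \ref{p:limit}.)

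The hard part is the genuine nonlocality. In the local prototypes \cite{T,Chen} the two bumps can be taken with disjoint supports and $\norm{u}^2=\norm{u^+}^2+\norm{u^-}^2$, so $g_1,g_2$ decouple exactly; here the supports of $w_1$ and $u_\eps$ always overlap and the splitting \eqref{4-13} produces the positive cross term $\int_{\R^{2N}}\frac{\Phi_{a,b}^+(x)\Phi_{a,b}^-(y)+\Phi_{a,b}^+(y)\Phi_{a,b}^-(x)}{|x-y|^{N+2s}}\,dxdy$, along with mixed $L^{q+1}$ and $L^{2^*}$ integrals. The crux is to prove that all these mixed quantities are of strictly lower order in $\eps$ than the diagonal ones $\norm{w_1}^2$, $\norm{b_0u_\eps}^2$, $|w_1|^{2^*}_{L^{2^*}(\Om)}$, $|u_\eps|^{2^*}_{L^{2^*}(\Om)}$, so that $G$ is a uniform small perturbation of $(h_1,h_2)$ on $D$; the bounds on $\int_\Om w_1^{2^*-1}u_\eps$, $\int_\Om w_1 u_\eps^{q}$ and $\int_\Om w_1u_\eps^{2^*-1}$ in Lemma \ref{l:4i} are exactly what makes this quantitative. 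Once this decoupling estimate is in hand, the Miranda step is routine.
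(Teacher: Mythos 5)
Your strategy is genuinely different from the paper's, and it hinges on a step you name but do not prove. Writing $\Phi_{a,b}=aw_1-bu_\eps$, your whole Miranda argument rests on the claim that $G=(g_1,g_2)$ converges uniformly on the square $D$ to the decoupled pair $(h_1,h_2)$, i.e.\ that $\Phi_{a,b}^+-aw_1=-\min(aw_1,bu_\eps)$ and $(-\Phi_{a,b}^-)-bu_\eps$ tend to $0$ \emph{in the $X_0$ norm}, together with the vanishing of the nonlocal interaction term $\int_{\R^{2N}}\frac{\Phi^+_{a,b}(x)\Phi^-_{a,b}(y)+\Phi^+_{a,b}(y)\Phi^-_{a,b}(x)}{|x-y|^{N+2s}}dxdy$. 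The $L^{q+1}$ and $L^{2^*}$ parts do follow from dominated convergence, but the Gagliardo-seminorm smallness of the overlap $\min(aw_1,bu_\eps)$ (a ``bubble truncated at fixed height'' estimate in the fractional setting) and of the cross term is a genuine piece of analysis: it is \emph{not} contained in Lemma \ref{l:4i}, which only bounds the mixed Lebesgue integrals $\int_\Om w_1^{2^*-1}u_\eps$, $\int_\Om w_1u_\eps^{q}$, $\int_\Om w_1u_\eps^{2^*-1}$, and it is not supplied anywhere else in your argument — you yourself flag it as ``the crux'' and then stop. Since $\|\Phi^+_{a,b}\|^2=a^2\|w_1\|^2-2a\langle w_1,\min(aw_1,bu_\eps)\rangle+\|\min(aw_1,bu_\eps)\|^2$, without this estimate the boundary sign conditions for Poincar\'e--Miranda cannot be verified, so the proof is incomplete at its central step. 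A secondary (minor) point is that your argument only works for $\eps$ sufficiently small, whereas the statement is for the given $u_\eps$; in the application to Theorem \ref{t:4i} this is harmless, but it is an extra hypothesis your route needs.

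For comparison, the paper avoids all of this with a one-parameter argument that needs no $\eps$-asymptotics and no energy decoupling: for $r\in(\bar r_1,\bar r_2)$ with $\bar r_1=\inf_\Om w_1/u_\eps$, $\bar r_2=\sup_\Om w_1/u_\eps$, the function $v_r=w_1-ru_\eps$ has nontrivial positive and negative parts, and Lemma \ref{l:4-iii} produces scalars $s^{+}(r),s^{-}(r)$ with $s^{+}(r)v_r^{+}\in\mathcal{N}^-$ and $-s^{-}(r)v_r^{-}\in\mathcal{N}^-$. One then shows $r\mapsto s^{\pm}(r)$ are continuous, that $s^{+}$ stays finite as $r\to\bar r_1^{+}$ but blows up as $r\to\bar r_2^{-}$ (via $t_0(v_r^+)\geq C|v_r^+|_{L^{2^*}(\Om)}^{-1}\to\infty$), and the reverse for $s^{-}$; the intermediate value theorem then gives $b$ with $s^{+}(b)=s^{-}(b)=a>0$, so $a(w_1-bu_\eps)\in\mathcal{N}^-_*$. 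If you want to keep your two-dimensional scheme, you must actually prove the fractional truncation/interaction estimates described above; otherwise the simpler crossing argument is the way to close the lemma.
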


\begin{proof}
 We will show that there exists $a>0, \ b \in \R$ such that 
 $$a(w_1-bu_\var)^{+} \in \mathcal{N}^{-} \quad\mbox{and}\quad -a(w_1-bu_\var)^{-} \in N^{-}. $$
 Let us denote $\bar r_1=\inf_{x \in \Om}\frac{w_1(x)}{u_\var(x)}, \,\, \bar r_2=\sup_{x \in \Om}\frac{w_1(x)}{u_\var(x)}$.\\
 As both $w_1$ and $u_\var$ are positive in $\Om$, we have $\bar r_1 \geq 0$ and  $\bar r_2$ can be $+\infty$.
Let $r \in (\bar r_1,\bar r_2)$. Then  $w_1,u_\var \in X_0$ implies $(w_1-ru_\var) \in X_0$ and $(w_1-ru_\var)^{+} \nequiv 0$.
Otherwise, $(w_1-ru_\var)^{+} \equiv 0$ would imply $\bar r_2 \leq r$, which is not possible. Define $v_r := w_1-ru_\var$. Then
 $0\not\equiv v_r^{+}\in X_0$ (since for any $u\in X_0$, we have $|u|\in X_0$). Similarly, $0\not\equiv v_r^{-}\in X_0$. Therefore, by lemma $\ref{l:4-iii}$ there exists $0<s^{+}(r)<s^{-}(r)$ such that $s^{+}(r)v^{+}_r \in \mathcal{N}^{-}$, 
 and $-s^{-}(r)(v_r^{-}) \in \mathcal{N}^{-} $.
Let us consider the functions $s^{\pm}: \R \to (0,\infty)$  defined as above.\\
\textit{Claim}: The functions $r\mapsto s^{\pm}(r)$ are continuous and $$\lim_{r \to \bar r_1^+}s^{+}(r)=t^{+}(v^{+}_{\bar r_1}) \quad\text{and}\quad
\lim_{r \to \bar r_2^-}s^{+}(r)=+\infty,$$ where the function $t^{+}$ is same as defined  in lemma \ref{l:4-iii}.\\
To see the claim, choose $r_0 \in (\bar r_1,\bar r_2)$ and $\{r_n\}_{n \geq 1} \subset (\bar r_1,\bar r_2)$  such that $r_n \to r_0$ as $n \to \infty$. We need to show 
that $s^{+}(r_n)\to s^{+}(r_0)$ as $n \to \infty$.  Corresponding to $r_n$ and $r_0$, we have  $v_{r_n}^{+}=(w_1-r_nu_\var)^{+}$ and $v_{r_0}^{+}=(w_1-r_0u_\var)^{+}$.
By lemma $\ref{l:4-iii}$. we note that $s^{+}(r)=t^{+}(v^{+}_r)$.
Let us define the function 
\Bea
F(s, r)& :=&s^{1-q}\norm{(w_1-ru_{\eps})^+}^2-s^{2^*-q-1}|(w_1-ru_{\eps})^+|^{2^*}_{L^{2^*}(\Om)} \\
&-&\mu|(w_1-ru_{\eps})^+|^{q+1}_{L^{q+1}(\Om)}\\
&=& \phi(s, r)-\mu|(w_1-ru_{\eps})^+|^{q+1}_{L^{q+1}(\Om)},
\Eea
where $$\phi := s^{1-q}\norm{(w_1-ru_{\eps})^+}^2-s^{2^*-q-1}|(w_1-ru_{\eps})^+|^{2^*}_{L^{2^*}(\Om)},$$ is defined similar to \eqref{eq:phi} (see Lemma \ref{l:4-iii}). 
Doing the similar calculation as in lemma $\ref{l:4-iii}$, we obtain that for any fixed $r$, the function $F(s, r)$ has only two zeros $s=t^{+}(v_r^+)$ and $s=t^{-}(v_r^+)$ (see \eqref{4-5}). Consequently $s^{+}(r)$ is the largest $0$ of $F(s, r)$ for  any fixed $r$. As $r_n \to r_0$ we have $v^{+}_{r_n} \to v^{+}_{r_0}$ in $X_0$ .
Indeed, by straight forward computation it follows $v_{r_n}\to v_{r_0}$ in $X_0$. Therefore $|v_{r_n}|\to |v_{r_0}|$ in $X_0$. This in turn implies $v_{r_n}^+\to v_{r_0}^+$ in $X_0$.
Hence $||v_{r_n}^+||_{X_0}\to ||v_{r_0}^+||_{X_0}$. Moreover by Sobolev inequality, we have $|v_{r_n}^+|_{L^{2^*}(\Om)}\to 
|v_{r_0}^+|_{L^{2^*}(\Om)}$ and  $|v_{r_n}^+|_{L^{q+1}(\Om)}\to |v_{r_0}^+|_{L^{q+1}(\Om)} $. As a result, we have
$F(s, r_n)\to F(s, r_0)$ uniformly. Therefore an elementary analysis yields $s^+(r_n)\to s^+(r_0)$.

Moreover, $\bar r_2\geq \f{w_1}{u_\eps}$ implies $w_1-\bar r_2u_{\eps}\leq 0$. As a consequence $r\to \bar r_2^-$ implies $(w_1-r u_{\eps})^+\to 0$ pointwise. Moreover, since $|(w_1-r u_{\eps})^+|_{L^{\infty}(\Om)}\leq |w_1|_{L^{\infty}(\Om)}$, using dominated convergence theorem we have\\ $|(w_1-r u_{\eps})^+|_{L^{2^*}(\Om)}\to 0$.
From the analysis in Lemma \ref{l:4-iii}, for any $r$, we also have $s^+(r)>t_0(v_r^+)$, where function $t_0$ is defined as in \eqref{t-0}, which is the maximum point of $\phi(.,r)$.
Therefore it is enough to show that $\lim_{r\to \bar r_2^-}t_0(v_r^+)=\infty$. Applying \eqref{in:S} in the definition of $t_0(v_r^+)$ we get
$$t_0(v_r^+)=\bigg(\f{(1-q)||v_r^+||^2}{(2^*-1-q)|v_r^+|_{L^{2^*}(\Om)}^{2^*}}\bigg)^\f{1}{2^*-2}\geq\bigg(\f{S_s(1-q)}{2^*-1-q}\bigg)^\f{1}{2^*-2}|v_r^+|_{L^{2^*}(\Om)}^{-1}.$$ 
Thus $\lim_{r\to \bar r_2^-}t_0(v_r^+)=\infty$.  

Similarly proceeding  as above we can show that if $r\to \bar r_1^-$ then $v_r^+\to v_{\bar r_1}$ and 
$\lim_{r \to \bar r_1^+}s^{+}(r)=t^{+}(v^{+}_{\bar r_1})$ and
$$\lim_{r \to r_1^{+}}s^{-}(r)=+\infty,\,\, \lim_{r\to r_2^{-}}s^{-}(r)=t^{+}(v_r^-)<+\infty. $$
The continuity of $s^{\pm}$ implies that there exists $b \in (\bar r_1,\bar r_2)$ such that
$s^{+}(r)=s^{-}(r)=a>0$.
Therefore, $$a(w_1-bu_\var^{+}) \in \mathcal{N}^{-} \quad\mbox{and}\quad -a(w_1-bu_\var^{-}) \in \mathcal{N}^{-},$$
that is, the function $a(w_1-bu_\var) \in \mathcal{N}_*^{-}$ and this completes the proof.

\end{proof}

\vspace{4mm}

{\bf Proof of Theorem \ref{thm.2}}: Define $\mu^*=\min\{\mu_*,\tilde\mu, \mu_0, \mu_1\}$. 
Combining Theorem \ref{t:4ii} and Theorem \ref{t:4i}, we complete the proof of this theorem for $\mu\in(0,\mu^*)$ .
\hfil{$\square$}

\section{\bf Proof of Theorem \ref{thm.3}} 

Before starting the proof we like to remark that when $\mu\geq 0,\ \la>0$, Theorem \ref{thm.3} (a) also follows from \cite[Theorem 1]{BMS}. Here we give a proof which covers the entire range mentioned in Theorem \ref{thm.3}. 

\proof  (a)We assume $\mu\in\R$ and $\la>0$. We prove part (a) using Fountain theorem \ref{t:F}. Energy functional corresponding to $(\mathcal{P}_K)$ is defined by $I_{\mu}^{\la}$ (see \eqref{I-mu-la}). We need to verify that $I_{\mu}^{\la}$ satisfies (A1)-(A4) of Theorem \ref{t:F}.
We choose $X_j, Y_j, Z_j$  as in \eqref{YZ_k} and $G:=\Z/2$. Therefore, (A1) is satisfied. 

\vspace{2mm}

Next to check (A2) holds, we observe that,  
$$I_{\mu}^{\la}(u)\leq\frac{1}{2}{||u||_{X_0}}^2+\f{|\mu|}{q+1}|u|_{L^{q+1}(\Om)}^{q+1}-\frac{\la}{p+1}|u|_{L^{p+1}(\Om)}^{p+1}.$$
Since on the finite dimensional space $Y_k$ all the norms are equivalent, $\la>0$ and $1<q+1<2<p+1$, it is easy to see that (A2) is satisfied if we choose $\rho_k>0$ large enough.
 
\vspace{2mm} 

To see (A3) holds,  we observe that 
\be\label{5-1}
I_{\mu}^{\la}(u)\geq \frac{\norm{u}_{X_0}^2}{2}-\frac{|\mu|}{q+1}\Iom |u|^{q+1} dx-\frac{\la}{p+1}\Iom |u|^{p+1}dx. \ee
Applying H\"{o}lder inequality followed by Youngs inequality we obtain 
$$\Iom |u|^{q+1} dx\leq \f{q+1}{p+1}\Iom |u|^{p+1}dx +\f{p-q}{p+1}|\Om|.$$
 Substituting back in \eqref{5-1}, we obtain
$$I_{\mu}^{\la}(u) \geq \frac{1}{2}\norm{u}_{X_0}^2-\bigg(\frac{|\mu|}{p+1}+\frac{\la}{p+1}\bigg)\Iom |u|^{p+1} - \frac{(p-q)|\mu|}{(p+1)(q+1)}|\Om|.$$
Define
$$\beta_k:=\sup_{{u \in Z_k},\ {\norm{u}_{X_0}=1}} |u|_{L^{p+1}(\Om)}.$$ Hence on $Z_k$ we have 
$$I_{\mu}^{\la}(u) \geq \frac{1}{2}\norm{u}_{X_0}^2-\frac{(\la+|\mu|)\beta_k^{p+1}}{p+1}\norm{u}^{p+1}- \frac{(p-q)|\mu|}{(p+1)(q+1)}|\Om|. $$
Choosing $r_k^{1-p}=(\la+|\mu|)\beta_k^{p+1}$, we have, for  $u \in Z_k$ and $\norm{u}_{X_0}=r_k, $ 
$$I_{\mu}^{\la}(u) \geq \bigg(\frac{1}{2}-\frac{1}{p+1}\bigg) r_k^2-\frac{(p-q)|\mu|}{(p+1)(q+1)}|\Om|. $$
Lemma \ref{beta_k} yields $\beta_k \to 0 $ and hence  $r_k \to \infty$ as $k \to \infty. $ Therefore (A3) is satisfied.

\vspace{2mm}

In order to verify (A4), let $\{u_n\}\subset X_0$ such that 
$$I_{\mu}^{\la}(u_n)\to c \quad\text{and}\quad (I_{\mu}^{\la})'(u_n)\to 0 \quad\text{in}\quad (X_0)',$$
where $c>0$ and $(X_0)'$ denotes the dual space of $X_0$. Following the same calculation as in Theorem \ref{thm.1}, we get $\{u_n\}$ is bounded in $X_0$ and there exists $u\in X_0$ such that up to a subsequence $u_n\deb u$ in  $X_0$ and $u_n\to u$ in $L^r(\Rn)$ for every $r \in[1, 2^*)$. Since $\<(I_{\mu}^{\la})'(u_n), v\>=0$ for every $v$ in $X_0$, passing the limit using Vitali's convergence theorem, it follows $\<(I_{\mu}^{\la})'(u), v\>=0$ for every $v$ in  $X_0$. Therefore
\Bea
o(1) &=&\<(I_{\mu}^{\la})'(u_n)-(I_{\mu}^{\la})'(u),u_n-u\> \\
&=&\norm{u_n-u}^2 \\
&-&\mu\Iom{(|u_n|^{q-1}u_n-|u|^{q-1}u)(u_n-u)}dx\\
&-& \la\Iom{(|u_n|^{p-1}u_n-|u|^{p-1}u)(u_n-u)} dx.
\Eea
Again, passing the limit by Vitali, we obtain $u_n\to u$ in $X_0$. Hence, (A4) is satisfied. Therefore by Theorem \ref{t:F}, it follows that $(\mathcal{P}_K)$ has a sequence of nontrivial solution $\{w_k\}_{k \geq 1}$ such that $I_\mu^\la(w_k) \to \infty$ as 
 $k \to \infty. $ Furthermore, if $\la >0,\ \mu \geq 0, $ then $I_{\mu}^{\la}(w_k)\leq ||w_k||^2$ and thus $\norm{w_k}_{X_0} \to \infty$ as $k \to \infty$.

\vspace{2mm}

(b) This part follows from Theorem \ref{t:DF}. We can proceed along the same line of proof of Theorem \ref{thm.1} to show (D1)-(D3) of Theorem \ref{t:DF} are satisfied.  
To check the assertion (D4), we consider a sequence $\{u_{r_j}\} \subset X_0$ such that as 
\be
 \{u_{r_j}\} \in Y_{r_j},  \quad
I_{\mu}^{\la}(u_{r_j}) \to c, \quad (I_{\mu}^{\la})|'_{Y_{r_j}}(u_{r_j}) \to 0 \quad\text{as}\quad r_j \to \infty\no.
\ee
We can prove exactly in the same way as in Theorem \ref{thm.1} that $\{u_n\}$ is a bounded PS sequence in $X_0$ at level $c$. Therefore, it is easy to conclude, as in part (a) that $u_n$ converges strongly in $X_0$. Hence (D4) is also satisfied and as a result by Theorem \ref{t:DF}, we conclude $(\mathcal{P}_K)$ has a sequence of nontrivial solutions $\{v_k\}_{k \geq 1}$ such that $c_k:= I_\mu^\la(v_k)< 0$ and $c_k\to 0$ as 
 $k \to \infty. $ Using $\<(I_{\mu}^{\la})'(u_k), u_k\>=0$ in the definition of $I_{\mu}^{\la}(u_k)$, we have
 $$\mu\bigg(1-\f{2}{q+1}\bigg)\Iom|u|^{q+1}dx+\la\bigg(1-\f{2}{p+1}\bigg)\Iom|u|^{p+1}dx=2c_k<0.$$
Therefore, if $\mu >0,\ \la \leq 0, $ then 
$$0\leq-\la\bigg(1-\f{2}{p+1}\bigg)\Iom|u|^{p+1}dx=-2c_k+\mu\bigg(1-\f{2}{q+1}\bigg)\Iom|u|^{q+1}dx,$$
 since $1<q+1<2<p+1$. This implies, $-2c_k\geq -\mu\big(1-\f{2}{q+1}\big)\Iom|u|^{q+1}dx$. Hence $\Iom |u_k|^{q+1}dx\leq \f{-2c_k q}{\mu(2-q)}$. Moreover, $\<(I_{\mu}^{\la})'(u_k), u_k\>=0$ implies  $$||u_k||^2=\mu\Iom|u_k|^{q+1}dx+\la\Iom |u_k|^{p+1}dx\leq \mu\Iom|u_k|^{q+1}dx\leq  \f{-2c_k q}{2-q}\to 0,$$
as $k\to\infty$. This completes the proof.
 \hfil{$\square$}
 
\section{\bf A related variational problem}
In this section we consider a related problem that can be solved by doing the similar type of analysis that we did in Section 3. More precisely we consider the following problem:
\begin{align} \label{eq-HS}
\begin{cases}
  \left(-\Delta\right)^s u-\frac{\al u}{\abs{x}^{2s}} = \frac{|u|^{2^*(t)-2}u}{\abs{x}^t} + \mu |u|^{q-1}u &\quad\mbox{in}\quad \Omega,\\
 u=0&\quad\mbox{in}\quad\mathbb{R}^N\setminus\Omega,
 \end{cases}
\end{align}
where $N>2s$, $\Omega$ is an open, bounded domain in $\Rn$ with smooth boundary, $0 \leq t < 2s ,\,
0<q<1,\,2^*(t)=\frac{2(N-t)}{N-2s}, \,
\al <\al_H:=2^{2s} \frac{\Ga^2(\frac{N+2s}{4})}{\Ga^2(\frac{N-2s}{4})}$ is the best fractional Hardy constant on $\Rn$. Thanks to the following fractional Hardy inequality :
\be\lab{in:HS}
\al_H\int_{\Rn}\f{|u|^2}{|x|^{2s}}dx\leq \displaystyle\int_{\Rn}|(-\De)^\f{s}{2}u|^2 dx,
\ee
which was proved by Herbst \cite{H} (see also \cite{Ghouss}), \\ $\bigg(\displaystyle\int_{\Rn}|(-\De )^\f{s}{2}u|^2dx-\al\displaystyle\int_{\Om}\frac{\abs{u(x)}^2}{\abs{x}^{2s}}\bigg)^{\frac{1}{2}}$ is a norm equivalent to the norm \eqref{norm-2} in $X_0(\Om)$. Interpolating the above Hardy inequality with \eqref{in:S} and followed by simple calculation, we have the following fractional Hardy-Sobolev inequality
$$C\displaystyle\left(\int_{\Om}\f{|u|^{2^*(t)}}{|x|^t}dx\right)^\f{2}{2^*(t)} \leq\int_{\Rn}|(-\De )^\f{s}{2}u|^2dx-\al\int_{\Om}\frac{\abs{u(x)}^2}{\abs{x}^{2s}}.$$ Therefore we can define the quotient $S_s(\al)>0$ as follows
\be\lab{S-al}
S_s(\al):=\inf_{u\in X_0,\ u\not=0} \f{\displaystyle\int_{\Rn}|(-\De )^\f{s}{2}u|^2dx-\al\int_{\Om}\frac{\abs{u(x)}^2}{\abs{x}^{2s}}  }{\displaystyle\left(\int_{\Om}\f{|u|^{2^*(t)}}{|x|^t}dx\right)^\f{2}{2^*(t)}}.
\ee
The following theorem regarding existence of infinitely many nontrivial solutions for fractional Hardy-Sobolev type equation can be proved in the spirit of
theorem $\ref{thm.1}. $ 
\begin{theorem} \label{thm.4}
Let $\Om$ be a bounded domain in $\Rn$ with smooth boundary, $N>2s. $Then there exists $\mu^*>0$ such that for all $\mu \in (0,\mu^*)$, problem $(\ref{eq-HS})$ has a sequence of non-trivial solutions $\{u_n\}_{n \geq 1}$ such that $I(u_n) <0$ and $I(u_n) \to 0$ as $n \to \infty$ where $I(.)$ is the corresponding energy functional associated with $(\ref{eq-HS}). $
\end{theorem}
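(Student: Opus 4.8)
The plan is to transcribe the proof of Theorem~\ref{thm.1} almost verbatim, replacing the fractional Sobolev data $(S_K,2^*)$ by the Hardy--Sobolev data $(S_s(\al),2^*(t))$. One works on $X_0(\Om)$ with the norm $\norm{u}_\al:=\big(\int_{\Rn}|(-\De)^{s/2}u|^2dx-\al\Iom\frac{|u|^2}{|x|^{2s}}dx\big)^{1/2}$, which is equivalent to \eqref{norm-2} thanks to \eqref{in:HS} and $\al<\al_H$; the energy functional associated with \eqref{eq-HS} is
\[
I(u)=\frac12\norm{u}_\al^2-\frac{\mu}{q+1}\Iom|u|^{q+1}dx-\frac{1}{2^*(t)}\Iom\frac{|u|^{2^*(t)}}{|x|^t}dx,
\]
which is $C^1$ on $X_0$ by the fractional Hardy--Sobolev inequality and whose critical points are exactly the weak solutions of \eqref{eq-HS}. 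Taking $X_j,Y_k,Z_k$ and $G=\Z/2$ as in \eqref{YZ_k}, assumption (A1) of Theorem~\ref{t:DF} holds. Setting $\beta_k:=\sup_{u\in Z_k,\,\norm{u}_\al=1}|u|_{L^{q+1}(\Om)}$ (so $\beta_k\to0$ by Lemma~\ref{beta_k}, since $1<q+1<2<2^*(t)$) and $c':=\sup_{u\in X_0,\,\norm{u}_\al=1}\Iom\frac{|u|^{2^*(t)}}{|x|^t}$, on $Z_k$ with $\norm{u}_\al\le R:=(2^*(t)/4c')^{1/(2^*(t)-2)}$ one has $I(u)\ge\frac14\norm{u}_\al^2-\frac{\mu}{q+1}\beta_k^{q+1}\norm{u}_\al^{q+1}$; choosing $\rho_k:=(4\mu\beta_k^{q+1}/(q+1))^{1/(1-q)}\to0$ yields (D1), (D2) follows from finite-dimensionality of $Y_k$ together with $\mu>0$ (take $r_k=\rho_k/2$), and (D3) from $-\frac{\mu}{q+1}\beta_k^{q+1}\rho_k^{q+1}\le d_k\le\frac12\rho_k^2$. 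This is a word-for-word copy of Section~3.

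The only genuinely new ingredient is the local Palais--Smale condition (D4), i.e.\ the analogue of the Claim in Section~3: there exists $k>0$ such that any sequence $\{u_{r_j}\}\subset X_0$ with $u_{r_j}\in Y_{r_j}$, $I(u_{r_j})\to c$, $I|_{Y_{r_j}}'(u_{r_j})\to0$ and
\[
c<\frac{2s-t}{2(N-t)}\,S_s(\al)^{\frac{N-t}{2s-t}}-k\,\mu^{\frac{2^*(t)}{2^*(t)-q-1}}
\]
has a subsequence converging in $X_0$. Once this is established, put $\mu^*:=\big(\tfrac{(2s-t)\,S_s(\al)^{(N-t)/(2s-t)}}{2(N-t)\,k}\big)^{(2^*(t)-q-1)/2^*(t)}$, so that for $\mu\in(0,\mu^*)$ the right-hand side above is positive; then every $c\in[d_k,0)$ satisfies the displayed bound, (D4) holds, and Theorem~\ref{t:DF} gives a sequence $\{u_n\}$ of nontrivial solutions of \eqref{eq-HS} with $I(u_n)<0$ and $I(u_n)\to0$.

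To prove the Claim one runs the four steps of Section~3. Boundedness of $\{u_{r_j}\}$ comes from estimating $I(u_{r_j})-\frac12\<I'(u_{r_j}),u_{r_j}\>$ and the Hardy--Sobolev inequality as in Step~1; then $\{u_{r_j}\}$ is a genuine $(PS)_c$ sequence (Step~2, using that the $Y_{r_j}$ exhaust $X_0$) whose weak limit $u$ solves \eqref{eq-HS} (Step~3, passing to the limit by Vitali's theorem and H\"older in the concave term, and by weak continuity of the Hardy and Hardy--Sobolev terms). With $v_{r_j}:=u_{r_j}-u$ one has $\norm{v_{r_j}}_\al^2=\norm{u_{r_j}}_\al^2-\norm{u}_\al^2+o(1)$: the fractional-seminorm part splits because $u_{r_j}\deb u$ in $X_0$, and the Hardy part because $\Iom\frac{v_{r_j}u}{|x|^{2s}}\to0$, since $v_{r_j}/|x|^s$ is bounded in $L^2(\Om)$ by \eqref{in:HS} and tends to $0$ a.e., hence weakly in $L^2(\Om)$. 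The weighted Brezis--Lieb lemma gives $\Iom\frac{|u_{r_j}|^{2^*(t)}}{|x|^t}=\Iom\frac{|v_{r_j}|^{2^*(t)}}{|x|^t}+\Iom\frac{|u|^{2^*(t)}}{|x|^t}+o(1)$. Combining these with $I(u_{r_j})\to c$ and $\<I'(u_{r_j}),u_{r_j}\>\to0$, one obtains (along a further subsequence) $\norm{v_{r_j}}_\al^2\to b$, $\Iom\frac{|v_{r_j}|^{2^*(t)}}{|x|^t}\to b$ and $I(u)=c-\frac{2s-t}{2(N-t)}b$; by \eqref{S-al}, $b\ge S_s(\al)b^{2/2^*(t)}$, so either $b=0$ (whence $u_{r_j}\to u$ and we are done) or $b\ge S_s(\al)^{(N-t)/(2s-t)}$. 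In the latter case, using $\<I'(u),u\>=0$ and H\"older's inequality $\Iom|u|^{q+1}dx\le a'\big(\Iom\frac{|u|^{2^*(t)}}{|x|^t}\big)^{(q+1)/2^*(t)}$ (finite since the dual weight is integrable near $0$), one gets, exactly as in \eqref{3:6}--\eqref{3:8},
\[
c\ge\frac{2s-t}{2(N-t)}\,S_s(\al)^{\frac{N-t}{2s-t}}+g(\tau),\qquad g(\tau):=\frac{2s-t}{2(N-t)}\tau^{2^*(t)}-a\mu\,\tau^{q+1},
\]
for suitable $\tau\ge0$ and $a>0$; since $1<q+1<2<2^*(t)$, elementary analysis as in \eqref{3:k} gives $\min_{\tau\ge0}g(\tau)=-k\,\mu^{2^*(t)/(2^*(t)-q-1)}<0$ with $k>0$ independent of $\mu$, contradicting the bound on $c$. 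Hence $b=0$, which proves the Claim.

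The main obstacle is precisely this compactness step: one must identify the right threshold $\frac{2s-t}{2(N-t)}S_s(\al)^{\frac{N-t}{2s-t}}$ for the singular nonlinearity, check that the Brezis--Lieb splitting survives the weight $|x|^{-t}$ even though mass may concentrate at the origin (where the Hardy potential also acts), and verify that the Hardy term $\al\Iom u^2/|x|^{2s}$ splits along $u_{r_j}=v_{r_j}+u$ --- for which the weak convergence $v_{r_j}\deb0$ in $X_0$ and the equivalence of the two norms are what is needed. Apart from this, the argument is a routine rewriting of Section~3, and no restriction on $N$ beyond $N>2s$ is required because the scheme works at negative energy levels.
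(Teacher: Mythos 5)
Your proposal is correct and follows exactly the route the paper intends: the paper itself only sketches the proof of Theorem \ref{thm.4}, saying one should repeat the Dual Fountain argument of Theorem \ref{thm.1} with the threshold in \eqref{3:c} replaced by $\f{2s-t}{2(N-t)}S_s(\al)^{\f{N-t}{2s-t}}-k\mu^{\f{2^*(t)}{2^*(t)-q-1}}$, which is precisely the claim you formulate and verify. The details you supply (equivalent Hardy norm, weighted Brezis--Lieb splitting, vanishing of the Hardy cross term, and the H\"older estimate of the concave term against the Hardy--Sobolev term) are the natural and correct way to fill in that sketch.
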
 
In order to prove this theorem one essentially needs to verify an argument similar to \eqref{3:c}, where RHS of \eqref{3:c} should be replaced by $\f{2s-t}{2(N-t)}S_s(\al)^\f{N-t}{2s-t}-k\mu^\f{2^*(t)}{2^*(t)-q-1}$ and this would follow by the similar type of arguments as in the proof of Theorem \ref{thm.1}. 

\section{\bf Appendix} 

\begin{lemma}\label{l:6-ii}
Let $g_n$ be as in \eqref{g_n} in the Theorem \ref{t:4ii} and $v\in X_0$ such that $||v||=1$. Then there exists $\mu_1>0$ such that, $\mu\in (0,\mu_1)$ implies $\<g_n'(0),v\>$ is uniformly bounded in $X_0$. 
\end{lemma}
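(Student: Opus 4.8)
The plan is to read $\<g_n'(0),v\>$ off the explicit formula established in Lemma~\ref{l:4-v}, evaluated at $u=u_n\in\mathcal{N}_1^-\subset\mathcal{N}$ and tested against $\phi=v$:
\[
\<g_n'(0),v\>=\f{2\<u_n,v\>-2^*\Iom |u_n|^{2^*-2}u_n v\,dx-(q+1)\mu\Iom |u_n|^{q-1}u_n v\,dx}{(1-q)\norm{u_n}^2-(2^*-q-1)|u_n|^{2^*}_{L^{2^*}(\Om)}}.
\]
By Claim~2 in the proof of Theorem~\ref{t:4ii}, $\{u_n\}$ is bounded in $X_0$, say $\norm{u_n}\leq C_0$, with $C_0$ independent of $n$ (and, since $\ba_1<\tilde\al_\mu^-<\f{s}{N}S_s^{\f{N}{2s}}$, independent of $\mu$ for $\mu$ in a fixed small range). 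Applying Cauchy--Schwarz, the Sobolev inequality \eqref{in:S}, and H\"older's inequality for the last term, the numerator is bounded in modulus by $\big(2\norm{u_n}+C\norm{u_n}^{2^*-1}+C\mu\norm{u_n}^{q}\big)\norm{v}$, hence by a constant depending only on $C_0,\Om,N,s,q$, recalling $\norm{v}=1$. It thus remains to bound the denominator away from $0$, uniformly in $n$.

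For the denominator I would use $u_n\in\mathcal{N}$, i.e. $\norm{u_n}^2=|u_n|^{2^*}_{L^{2^*}(\Om)}+\mu|u_n|^{q+1}_{L^{q+1}(\Om)}$, to rewrite it as
\[
(1-q)\norm{u_n}^2-(2^*-q-1)|u_n|^{2^*}_{L^{2^*}(\Om)}=(1-q)\mu|u_n|^{q+1}_{L^{q+1}(\Om)}-(2^*-2)|u_n|^{2^*}_{L^{2^*}(\Om)}.
\]
The crucial ingredient is a uniform lower bound for $|u_n|^{2^*}_{L^{2^*}(\Om)}$. Since $u_n\in\mathcal{N}_1^-$ we have $u_n^+\in\mathcal{N}^-$, so $(1-q)\norm{u_n^+}^2<(2^*-q-1)|u_n^+|^{2^*}_{L^{2^*}(\Om)}$; combined with \eqref{in:S} this first yields, exactly as in \eqref{4-7},
\[
\norm{u_n^+}\geq\Big(\f{1-q}{2^*-q-1}\Big)^{\f{N-2s}{4s}}S_s^{\f{N}{4s}}=:c_0>0,
\]
and then, feeding this back into the same strict inequality,
\[
|u_n|^{2^*}_{L^{2^*}(\Om)}\geq|u_n^+|^{2^*}_{L^{2^*}(\Om)}>\f{1-q}{2^*-q-1}\norm{u_n^+}^2\geq\f{(1-q)c_0^2}{2^*-q-1}=:c_1>0.
\]
On the other hand, by H\"older's inequality and \eqref{in:S}, $|u_n|^{q+1}_{L^{q+1}(\Om)}\leq|\Om|^{\f{2^*-q-1}{2^*}}S_s^{-\f{q+1}{2}}C_0^{q+1}=:C_1$, again uniformly in $n$.

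Consequently, choosing $\mu_1>0$ so small that $(1-q)\mu_1 C_1\leq\f{1}{2}(2^*-2)c_1$, one gets, for every $\mu\in(0,\mu_1)$ and every $n$,
\[
(1-q)\mu|u_n|^{q+1}_{L^{q+1}(\Om)}-(2^*-2)|u_n|^{2^*}_{L^{2^*}(\Om)}\leq-\f{1}{2}(2^*-2)c_1<0.
\]
Dividing the uniformly bounded numerator by this uniformly negative denominator gives $|\<g_n'(0),v\>|\leq C$ for all $n$ and all $v\in X_0$ with $\norm{v}=1$, which is the assertion. I expect the main obstacle to be precisely the uniform lower bound on $|u_n|^{2^*}_{L^{2^*}(\Om)}$: the concave term $\mu|u|^{q-1}u$ rules out any elementary lower bound on $\norm{u_n}$ for $u_n\in\mathcal{N}$, so one genuinely has to exploit the membership $u_n^+\in\mathcal{N}^-$ — unlike $\mathcal{N}$ or $\mathcal{N}^+$, the set $\mathcal{N}^-$ forces the norm to stay bounded away from $0$ — and this is also exactly where the smallness restriction on $\mu$ enters.
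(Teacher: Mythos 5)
Your proof is correct, but it takes a genuinely different route from the paper. The paper bounds the denominator away from zero by contradiction: assuming $(1-q)\norm{u_n}^2-(2^*-q-1)|u_n|^{2^*}_{L^{2^*}(\Om)}=o(1)$, it combines this with $u_n\in\mathcal{N}$ to get both an upper bound $\norm{u_n}\leq\big(\mu\tfrac{2^*-q-1}{2^*-2}|\Om|^{\frac{2^*-q-1}{2^*}}S_s^{-\frac{q+1}{2}}\big)^{\frac{1}{1-q}}+o(1)$ and (via Claim 3, i.e. the negative parts) a lower bound $|u_n|^{2^*}_{L^{2^*}(\Om)}\geq C$, and then evaluates an auxiliary functional $\psi_\mu$ which under the contradiction hypothesis is $o(1)$ while also being bounded below by a positive constant once $\mu<\mu_1$, with $\mu_1$ depending only on $s,q,N,|\Om|,S_s$. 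You instead argue directly: using the Nehari identity you rewrite the denominator as $(1-q)\mu|u_n|^{q+1}_{L^{q+1}(\Om)}-(2^*-2)|u_n|^{2^*}_{L^{2^*}(\Om)}$, extract a uniform lower bound on $|u_n|^{2^*}_{L^{2^*}(\Om)}$ from $u_n^+\in\mathcal{N}^-$ (the same computation as \eqref{4-7}, applied to the positive part rather than via Claim 3), bound $|u_n|^{q+1}_{L^{q+1}(\Om)}$ by H\"older, Sobolev and Claim 2, and then take $\mu$ small so the denominator is uniformly negative. Your version is shorter, dispenses with $\psi_\mu$, and as a byproduct identifies the sign of the denominator; the trade-off is that your $\mu_1$ involves the uniform bound $C_0$ on $\norm{u_n}$, so the argument rests on the (correct, and correctly flagged by you) observation that $C_0$ can be fixed independently of $\mu$ on the working range $(0,\min\{\tilde\mu,\mu_*\})$ because $\ba_1<\tilde\al_\mu^-<\f{s}{N}S_s^{\f{N}{2s}}$, whereas the paper's $\mu_1$ is explicit in the structural constants alone. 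Both arguments are complete and yield the stated conclusion.
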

\begin{proof}
 In view of lemma $\ref{l:4-v}$ we have,
$$\<g'_n(0),v\>=\displaystyle\frac{2\<u_n,v\>-2^*\displaystyle\Iom |u_n|^{2^*-2}u_nv-(q+1)\mu\displaystyle\Iom |u_n|^{q-1}u_nv}{(1-q)\norm{u_n}^2-
       (2^*-q-1)|u_n|^{2^*}_{L^{2^*}(\Om)}}.$$
Using Claim 2 in theorem \ref{t:4ii}, there exists $C>0 $ such that $\norm{u_n} \leq C$ for all $n \geq 1. $
Therefore applying H\"older inequality followed by  \eqref{in:S}, we have\\
$|\<g'_n(0),v\>|\leq \frac{C\norm{v}}{|(1-q)\norm{u_n}^2-(2^*-q-1)|u_n|^{2^*}_{L^{2^*}(\Om)}|}. $
Hence it is enough to show   
$$|(1-q)\norm{u_n}^2-(2^*-q-1)|u_n|^{2^*}_{L^{2^*}(\Om)}|>C,$$ for some $C>0$ and $n$ large.
Suppose it does not hold. Then up to a subsequence $$(1-q)\norm{u_n}^2-(2^*-q-1)|u_n|^{2^*}_{L^{2^*}(\Om)}=o(1)\quad\mbox{as}\quad n\to \infty. $$
Hence,
\begin{align} \label{*1}
\norm{u_n}^2=\frac{2^*-q-1}{1-q}|u_n|^{2^*}_{L^{2^*}(\Om)}+o(1)\quad\mbox{as}\quad n\to \infty.
\end{align}
Combining  the above expression along with the fact that $u_n \in \mathcal{N}$, we obtain
\begin{align} \label{*4}
\mu|u_n|^{q+1}_{L^{q+1}(\Om)}=\frac{2-2^*}{1-q}\abs{u_n}^{2^*}_{L^{2^*}(\Om)} +o(1)=\frac{2^*-2}{2^*-1-q}\norm{u_n}^2+o(1).
\end{align}
After applying H\"{o}lder inequality and followed by \eqref{in:S},  expression  \eqref{*4} yields 
\begin{align}\label{*6}
\norm{u_n} \leq \bigg(\mu\frac{2^*-q-1}{2^*-2}|\Om|^{\frac{2^*-q-1}{2^*}}S_s^{-\frac{q+1}{2}} \bigg)^{\frac{1}{1-q}} +o(1).
\end{align}
Combining \eqref{4-13} and Claim 3 in the proof of Theorem \ref{t:4ii}, we have $\norm{u_n}\geq b$, for some $b>0$. Therefore from \eqref{*1} we get
\begin{align}\label{*9}
|u_n|^{2^*}_{L^{2^*}(\Om)} \geq C\quad\mbox{for some constant}\  C>0,\ \mbox{and}\,\, n\,\,\mbox{large enough.}
\end{align}
Define $\psi_\mu: \mathcal{N} \to \R$ as follows:
$$\psi_\mu(u)=k_0\bigg(\frac{\norm{u}^{2(2^*-1)}}{|u|^{2^*}_{L^{2^*}(\Om)}}\bigg)^{\frac{1}{2^*-2}}-\mu|u|^{q+1}_{L^{q+1}(\Om)},$$
where $k_0=\left(\frac{1-q}{2^*-q-1}\right)^{\frac{N+2s}{4s}}\left(\frac{2^*-2}{1-q}\right)$. 
Simplifying  $\psi_{\mu}(u_n)$ using \eqref{*4}, we obtain
\be \label{*7}
\psi_\mu(u_n)=k_0\bigg[\bigg(\f{2^*-q-1}{1-q}\bigg)^{2^*-1}\f{|u_n|^{(2^*-1)2^*}_{L^{2^*}(\Om)}}{|u_n|_{L^{2^*}(\Om)}^{2^*}}\bigg]^\f{1}{2^*-2} - \f{2^*-2}{1-q} |u_n|_{L^{2^*}(\Om)}^{2^*}+o(1)=o(1).
\ee
On the other hand, using H\"older inequality in the definition of $\psi_{\mu}(u_n)$, we obtain
\bea\lab{10*}
 \psi_\mu(u_n)&=&k_0\bigg(\frac{\norm{u_n}^{2(2^*-1)}}{|u_n|^{2^*}_{L^{2^*}(\Om)}}\bigg)^{\frac{1}{2^*-2}}-
 \mu|u_n|^{q+1}_{L^{q+1}(\Om)}\notag \\               
&\geq&k_0\bigg(\frac{\norm{u_n}^{2(2^*-1)}}{|u_n|^{2^*}_{L^{2^*}(\Om)}}\bigg)^{\frac{1}{2^*-2}}-
\mu|\Om|^{\f{2^*-q-1}{2^*}}|u_n|^{q+1}_{L^{2^*}(\Om)}\notag\\
&=&|u_n|^{q+1}_{L^{2^*}(\Om)}\bigg\{k_0\bigg(\frac{\norm{u_n}^{2(2^*-1)}}{|u_n|^{2^*}_{L^{2^*}(\Om)}}\bigg)
^{\frac{1}{2^*-2}}\frac{1}{|u_n|^{q+1}_{L^{2^*}(\Om)}}-\mu|\Om|^{\f{2^*-q-1}{2^*}}\bigg\}.
\eea
Using \eqref{in:S} and \eqref{*6}, we simplify the term $\bigg(\frac{\norm{u_n}^{2(2^*-1)}}{|u_n|^{2^*}_{L^{2^*}(\Om)}}\bigg)
^{\frac{1}{2^*-2}}\frac{1}{|u_n|^{q+1}_{L^{2^*}(\Om)}}$ and obtain
\bea\lab{11*}
\bigg(\frac{\norm{u_n}^{2(2^*-1)}}{|u_n|^{2^*}_{L^{2^*}(\Om)}}\bigg)
^{\frac{1}{2^*-2}}\frac{1}{|u_n|^{q+1}_{L^{2^*}(\Om)}} &\geq& S_s^\f{N+2s}{4s}|u_n|_{L^{2^*}(\Om)}^{-q}\no\\
&\geq& S_s^\f{N+2s(q+1)}{4s}||u_n||^{-q}\no\\
&\geq& S_s^\f{N+2s(q+1)}{4s} \bigg(\mu\frac{2^*-q-1}{2^*-2}|\Om|^{\frac{2^*-q-1}{2^*}}S_s^{-\frac{q+1}{2}} \bigg)^{-\frac{q}{1-q}}.
\eea
Substituting back \eqref{11*} into \eqref{10*} and using \eqref{*9}, we obtain
\Bea
 \psi_\mu(u_n)&\geq& C^{q+1}\bigg [k_0 S^{\f{N+2s(q+1)}{4s}+(\f{1+q}{1-q}\f{q}{2})}\mu^{-\f{q}{1-q}}\big(\frac{2^*-q-1}{2^*-2}|\Om|^{\frac{2^*-q-1}{2^*}}\big)^{-\f{q}{1-q}}-\mu|\Om|^\f{2^*-q-1}{2^*}\bigg]\\
 &\geq& d_0,
\Eea
for some $d_0>0$, $n$ large and $\mu < \mu_1$, where
$\mu_1=\mu_1(k, s, q, N, |\Om|)$. This is a contradiction to \eqref{*7}. Hence the lemma follows. 
\end{proof}

\section{\bf Concluding remarks and questions}
We finish the paper with some remarks and related open questions. We have proved existence of infinitely many nontrivial solutions of the following problem 
\begin{align*}
 \begin{cases}
  \mathcal (-\De)^s u = \mu |u|^{q-1}u + |u|^{p-1}u=0 &\quad\mbox{in}\quad \Omega,\\
  u=0&\quad\mbox{in}\quad\mathbb{R}^N\setminus\Omega,
 \end{cases}
\end{align*}
where $0<s<1$, $0<q<1< p\leq {2^*-1}$, $\mu>0$, $N>2s$ and existence of at least one sign changing solution when $p=2^*-1$, $\f{1}{2}(\f{N+2s}{N-2s})<q<1$ and $N>6s$. Also it is known (see \cite{BCSS, BrCPS, CD}) that there exists $\mu_0>0$ such that, for every $\mu\in(0,\mu_0)$, the above problem admits at least two positive solutions. An
interesting question here is, can we say anything about exact number of positive solutions at least in some bounded domain, for example in the unit ball in $\Rn$? This fact is known in the case of local operator (see \cite{APY, DGP, OS, Tang}).  Answer to this question  can shed some light on the multiplicity questions of sign-changing solutions. 

In our forthcoming paper we study similar kind of problems for larger class of nonlocal equations where the leading term is given by
some nonlinear integro-differential operators; that is, the ones obtained
by replacing the fractional Laplacian in (1.1) with the following nonlinear operator,

\begin{align}
 \mathcal  F(u)(x)=P.V \int_{\mathbb{R}^N}|u(x)-u(y)|^{p-2}(u(x)-u(y))K(x,y)dy,\,\,\,x\in\mathbb{R}^N,\no 
\end{align}
where $K$ is a symmetric kernel of differentiability order $s\in(0,1)$ and $p>1$ with general possibly non smooth coefficients, as considered for instance
in the recent papers by Di Castro et al. \cite{CKP}.
\vspace{4mm}

{\bf Acknowledgement} The authors thank the referee for his/her valuable comments. The first author is supported by the INSPIRE research grant DST/INSPIRE 04/2013/000152 and the second author is supported by the NBHM grant 2/39(12)/2014/RD-II.

  \end{document}